\documentclass[a4paper,12pt,reqno]{amsart}

\usepackage{amsthm}
\usepackage{amsmath}
\usepackage{amssymb}
\usepackage{mathrsfs}
\usepackage{latexsym}
\usepackage{exscale}
\usepackage{geometry} 
\usepackage{graphicx} 
\usepackage[utf8]{inputenc}
\usepackage{verbatim}
\usepackage{enumerate} 
\usepackage{ esint } 
\usepackage{fancyhdr}

\usepackage{hyperref}

\usepackage{color}
\definecolor{red}{rgb}{1,0.1,0.1}
\definecolor{blue}{rgb}{0.1,0.1,1}
\definecolor{vb}{RGB}{160,32,240}

\newcommand{\vb}

\usepackage{xcolor}
\definecolor{violetafuerte}{HTML}{8000C0}

\theoremstyle{plain}

\newtheorem*{thm*}{Theorem}
\newtheorem*{prop*}{Proposition}
\newtheorem*{lemma*}{Lemma}

\numberwithin{equation}{section}
\newtheorem{thm}{Theorem}[section]
\newtheorem{lemma}[thm]{Lemma}
\newtheorem{corol}[thm]{Corollary}
\newtheorem{prop}[thm]{Proposition}

\theoremstyle{remark}
\newtheorem{remark}[thm]{Remark}

\theoremstyle{definition}

\newtheorem*{mydef*}{Definition}
\newtheorem{mydef}[thm]{Definition}
\newtheorem{ejem}[thm]{Example}

\calclayout

\allowdisplaybreaks

\newcommand{\R}{\mathbb{R}^n}

\begin{document}

\title[]{Weighted boundedness of maximal operator associated with invertible linear transformations on variable Lebesgue spaces}

\author[G.~Iba\~{n}ez~Firnkorn]{Gonzalo Iba\~{n}ez-Firnkorn}
\address{(G.~Iba\~{n}ez~Firnkorn) 
Instituto de Matemática (INMABB), Departamento de Matemática, Universidad Nacional del Sur (UNS)-CONICET
\\Bahía Blanca, Argentina}
\email{gonzalo.ibanez@uns.edu.ar}

\author[L.~A.~Vallejos]{Lucas Alejandro Vallejos}
\address{(L.~A.~Vallejos)  FaMAF \\ Universidad Nacional de C\'ordoba \\
CIEM (CONICET) \\ 5000 C\'ordoba, Argentina}
\email{lucas.vallejos@unc.edu.ar}

\begin{abstract}
In this paper, we study the boundedness of $M_{A^{-1}}$, the maximal operator involves an invertible matrix, on the weighted variable spaces $L^{p(\cdot)}(w)$, where the weights are in an appropriate class. In general, we obtain the necessary conditions for the weight classes and exponents. However, for certain types of invertible matrices, we characterize the weights such that the boundedness holds.
\end{abstract}

\thanks{The first author is partially supported by CONICET and SGCyT-UNS. The second author is partially supported by
CONICET and SECYT-UNC}

\subjclass[2020]{42B25,42B35}

\keywords{Maximal Operator, Variable Lebesgue Spaces, Weights.
}

\maketitle

\section{Introduction}

Let $T$ be an invertible linear transformation and $A$ be the invertible matrix $n\times n$ associated to $T$.
 We focus on the study of the maximal function $M_{A^{-1}}$ defined by
\[M_{A^{-1}}f(x)=\sup_{AQ \ni x} \frac1{|Q|}\int_Q | f |.
\]
where the supremum is taken over all cubes $Q$ with sides parallel to the axes. 
Observe that $M_{A^{-1}}f(x)=Mf(A^{-1}x)$, where $M$ is the Hardy-Littlewood maximal operator. These operators appear in the study of operators with several singularities, for example,  
\[T_{\alpha,m}f( x)=\int_{\mathbb{R}} \frac{f(y)}{|x-A_1y|^{\beta_1}\cdots|x-A_my|^{\beta_m}}\, dy\]
where $A_i$ are invertible matrices such that $A_i-A_j$ is invertible for $i\not=j$ and $\beta_1+\cdots +\beta_m=n-\alpha$ with $0 \leq \alpha<n$. The operator $T_{\alpha,m}$ is controlled in norm by the sum of $M_{A_i^{-1},\alpha}$, the fractional version of the maximal defined above. See, e.g \cite{IFR18,IFRV22,RU13}.  This type of operators are widely studied in weighted Lebesgue spaces and unweighted variable Lebesgue spaces, see \cite{IFRV22,UV20b} and their references.

In \cite{IFRV22}, the first author, Riveros and Vidal defined the class of weights $\mathcal{A}_{A,p}$: a weight $w$ belongs to class $\mathcal{A}_{A,p}$ if 
$$[w]_{\mathcal{A}_{A,p}}=\sup_Q \left(\frac1{|Q|}\int_{Q} w_A \right) \left(\frac1{|Q|}\int_Q w^{-\frac1{p-1}}\right)^{p-1}<\infty$$
where the supremum is over all cubes $Q$ with side parallel to the axes.
Observe that if $A=I$ is the identity matrix, then the class $\mathcal{A}_{A,p}$ coincides with the classical Muckenpoupt class $\mathcal{A}_p$.

They characterized the weights for which the maximal operator $M_{A^{-1}}$ is bounded from $L^p(w)$ to $L^{p,\infty}(w)$. 
Recently in \cite{IF26}, the first author studied the boundedness for $M_{A^{-1}}$ on $L^{p}(w)$. More precisely, for the case  of $A$ is a invertible matrix such that $A^k=I$, it was proved 
$$\mathcal{A}_{A,p}=\{w \in \mathcal{A}_{p} : w(Ax) \lesssim w(x) \; a.e. \,x\in \mathbb{R}^n\}$$
and the maximal operator $M_{A^{-1}}$ is bounded on $L^p(w)$ if and only if $w\in \mathcal{A}_{A,p}$.

 In this paper, we study the boundedness of the maximal operator $M_{A^{-1}}$ in variable Lebesgue spaces with appropriate weights. 
The weighted variable Lebesgue spaces $L_{w}^{p(\cdot)}(\mathbb{R}^n)$ is defined by the norm $\left\Vert f\right\Vert _{p(\cdot ),w}=\left\Vert fw\right\Vert _{p(\cdot )}$, for precise definition see Section 2. 

In \cite{CUDH11, CUFN12}, the authors studied the boundedness of the Hardy-Littlewood maximal operator on $L_{w}^{p(\cdot)}(\mathbb{R}^n)$. 
They consider the class of weights $A_{p(\cdot)}$, defined as
$$[w]_{\mathcal{A}_{p(\cdot)}}=\sup_Q |Q|^{-1} \|w\chi_Q\|_{p(\cdot)}\|w^{-1}\chi_Q\|_{p'(\cdot)}<\infty,$$
where the supremum is over all cubes $Q$ with side parallel to the axes. They proved that if $p(\cdot)\in \mathcal{P}^{\log}$ such that $1<p_-\leq p_+ < \infty $, then  the Hardy-Littlewood maximal operator is bounded on $L^{p(\cdot)}_w$ if and only if $w\in A_{p(\cdot)}$.

For the study of the weighted boundedness of $M_{A^{-1}}$ in variable Lebesgue spaces, we define the class of weights $\mathcal{A}_{A,p(\cdot)}$. We said that a weight $w$ belongs to class $\mathcal{A}_{A,p(\cdot)}$
$$[w]_{\mathcal{A}_{A,p(\cdot)}}=\sup_Q |Q|^{-1} \|w_A\chi_Q\|_{p(\cdot)}\|w^{-1}\chi_Q\|_{p'(\cdot)}<\infty,$$
where the supremum is over all cubes $Q$ with the side parallel to the axes. Observe that if $A=I$ the class $\mathcal{A}_{A,p(\cdot)}$ coincides with $\mathcal{A}_{p(\cdot)}$ with above. If $p(\cdot)\equiv p$ is constant, then the class $\mathcal{A}_{A,p(\cdot)}$ coincides with the class $\mathcal{A}_{A,p}$.
 
The first main result is the necessity of the class of weights in the boundedness of the maximal operator for any matrix invertible $A$,
\begin{thm}\label{Necesidad-w}
  Let $w$ be a weight, $A$ be an invertible matrix, and $p(\cdot)\in \mathcal{P}^{\log}$ such that $p(Ax)=p(x)$ a.e. $x\in \mathbb{R}^n$. If $M_{A^{-1}}$ is bounded on $L^{p(\cdot)}(w)$, then $w\in \mathcal{A}_{A,p(\cdot)}$.
\end{thm}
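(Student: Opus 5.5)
The plan is to test the boundedness of $M_{A^{-1}}$ on functions supported on a single cube $Q$, evaluate $M_{A^{-1}}$ on the image cube $AQ$, and then use duality to recover $\|w^{-1}\chi_Q\|_{p'(\cdot)}$. Throughout, $w_A(x)=w(Ax)$, and the hypothesis reads $\|(M_{A^{-1}}f)\,w\|_{p(\cdot)}\le C\|fw\|_{p(\cdot)}$.

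\textbf{Step 1 (lower bound on $AQ$).} Fix a cube $Q$ with sides parallel to the axes and a function $f\ge 0$ supported in $Q$. For every $x\in AQ$, the cube $Q$ itself is admissible in the supremum defining $M_{A^{-1}}f(x)$. Hence
\[
M_{A^{-1}}f(x)\ge \frac1{|Q|}\int_Q f \quad\text{for all } x\in AQ.
\]
Combining this with boundedness gives
\[
\Big(\frac1{|Q|}\int_Q f\Big)\,\|w\chi_{AQ}\|_{p(\cdot)}\le C\,\|fw\chi_Q\|_{p(\cdot)}.
\]

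\textbf{Step 2 (change of variables).} Next I compare $\|w\chi_{AQ}\|_{p(\cdot)}$ with $\|w_A\chi_Q\|_{p(\cdot)}$. Substitute $x=Ay$ in the modular and use $p(Ay)=p(y)$ a.e.:
\[
\int_{AQ}\Big(\frac{w(x)}{\lambda}\Big)^{p(x)}dx=|\det A|\int_Q\Big(\frac{w(Ay)}{\lambda}\Big)^{p(y)}dy .
\]
The two modulars therefore differ only by the factor $|\det A|$. By the standard relations between modular and norm, this gives
\[
\|w_A\chi_Q\|_{p(\cdot)}\le c_A\,\|w\chi_{AQ}\|_{p(\cdot)},\qquad c_A=\max\big(|\det A|^{-1/p_-},|\det A|^{-1/p_+}\big).
\]
Here I use $1\le p_-\le p_+<\infty$, which is part of the standing setting for $\mathcal{P}^{\log}$. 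This is the only place where the invariance $p(Ax)=p(x)$ is used, and it is the step requiring the most care: one must make sure the resulting constant depends only on $A$ and $p(\cdot)$, not on $Q$.

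\textbf{Step 3 (duality).} Putting Steps 1 and 2 together,
\[
\Big(\int_Q f\Big)\,\|w_A\chi_Q\|_{p(\cdot)}\le C c_A\,|Q|\,\|fw\chi_Q\|_{p(\cdot)}
\]
for every $f\ge0$ supported in $Q$. Write $f=h w^{-1}$ with $h\ge0$ supported in $Q$ and $\|h\|_{p(\cdot)}\le1$. Then $\int_Q f=\int h\,w^{-1}\chi_Q$. Take the supremum over such $h$ and apply the norm conjugate formula for variable Lebesgue spaces,
\[
\|g\|_{p'(\cdot)}\approx\sup_{\|h\|_{p(\cdot)}\le1}\int |g|h,
\]
which holds, by the Fatou property, even when the left side is a priori infinite. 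This yields
\[
\|w_A\chi_Q\|_{p(\cdot)}\,\|w^{-1}\chi_Q\|_{p'(\cdot)}\le C'|Q|.
\]

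\textbf{Step 4 (conclusion and degenerate cases).} Since $w$ is a weight, $w_A>0$ a.e., so $\|w_A\chi_Q\|_{p(\cdot)}>0$. The inequality of Step 3 therefore also forces $\|w^{-1}\chi_Q\|_{p'(\cdot)}<\infty$; no a priori local integrability of $w^{-1}$ is needed. The constant $C'$ is independent of $Q$, so taking the supremum over all cubes gives $[w]_{\mathcal{A}_{A,p(\cdot)}}\le C'<\infty$, that is, $w\in\mathcal{A}_{A,p(\cdot)}$.
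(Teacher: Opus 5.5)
Your proof is correct, and its skeleton is the paper's. You test on functions supported in a cube $Q$ and bound $M_{A^{-1}}f$ below by $\frac1{|Q|}\int_Q f$. You do this directly on $AQ$, while the paper first composes with $A$ via Proposition~\ref{UV} and then uses $Mf\ge(\frac1{|Q|}\int_Q|f|)\chi_Q$. In both, $p(Ax)=p(x)$ is used only in the change of variables relating $\|w\chi_{AQ}\|_{p(\cdot)}$ and $\|w_A\chi_Q\|_{p(\cdot)}$.

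Where you genuinely differ is in extracting $\|w^{-1}\chi_Q\|_{p'(\cdot)}$.

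\textbf{The paper's route.} It plugs in the explicit extremal function $f=\lambda(w^{-1}/\lambda)^{p'(\cdot)}\chi_Q$ with $\lambda=\|w^{-1}\chi_Q\|_{p'(\cdot)}$, and uses that a normalized function has modular $1$. This is elementary and bounds $[w]_{\mathcal{A}_{A,p(\cdot)}}$ by the operator norm (times the change-of-variables constant), with no duality constant. But it tacitly needs two things that are not among the hypotheses:
\begin{enumerate}
\item $\|w^{-1}\chi_Q\|_{p'(\cdot)}<\infty$, before membership in $\mathcal{A}_{A,p(\cdot)}$ is known, so that $\lambda$ makes sense;
\item $(p')_+<\infty$, i.e.\ $p_->1$, for Lemma~\ref{norma-1-modular} and for the formula for $f$ to make sense where $p=1$.
\end{enumerate}

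\textbf{Your route.} Going through the norm conjugate formula and the Fatou property, you obtain finiteness of $\|w^{-1}\chi_Q\|_{p'(\cdot)}$ as a conclusion, and the argument covers $p_-=1$. The only cost is an absolute constant.

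Incidentally, your determinant constant in Step~2 is the correct one. The identity $\rho(g\circ A)=|\det A|^{-1}\rho(g)$ shows that Proposition~\ref{UV} should read $\max\{1,|\det A|^{-1/p_-}\}$ rather than $\max\{1,|\det A|^{1/p_-}\}$. This does not affect either argument qualitatively.
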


The second result is the necessity of  relation between $p(Ax)$ and $p(x)$.
\begin{thm}\label{Necesidad-p}
Let $A$ be an invertible matrix and let $p(\cdot)\in \mathcal{P}(\mathbb{R}^n)$. Suppose that
there exists $x_0\in\mathbb{R}^n$ such that $p(\cdot)$ is continuous at $x_0$
and at $Ax_0$, and
\[
p(Ax_0)>p(x_0).
\]
Then the operator $M_{A^{-1}}$ is not bounded on $L^{p(\cdot)}(\mathbb{R}^n)$.
\end{thm}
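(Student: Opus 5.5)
We argue by contradiction with an explicit family of test functions concentrated near $x_0$. Put $y_0=Ax_0$, $p_0=p(x_0)$, $p_1=p(y_0)$, so $p_1>p_0$. By continuity we fix $\varepsilon>0$ and a radius $r_0>0$ such that
\[
p(x)\le p_0+\varepsilon \text{ on } B(x_0,r_0), \qquad p(y)\ge p_1-\varepsilon \text{ on } B(y_0,r_0), \qquad p_0+\varepsilon<p_1-\varepsilon .
\]
Recall that $M_{A^{-1}}f(y)=Mf(A^{-1}y)$. Hence a function $f$ supported in a small cube $Q$ centred at $x_0$ produces $M_{A^{-1}}f \gtrsim \frac{1}{|Q|}\int_Q|f|$ on the set $A(Q)$, which is a small neighbourhood of $y_0$. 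The mechanism is that the operator carries mass from a region where the exponent is about $p_0$ to a region where it is about $p_1$, and the higher exponent makes small sets cost much more.

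For small $r$, let $Q_r$ be the cube of side $r$ centred at $x_0$ and take $f_r=\chi_{Q_r}$. First we bound the norm of the input. Since $|Q_r|\le 1$ and $p\le p_0+\varepsilon$ on $Q_r$, the modular of $\chi_{Q_r}/\lambda$ is at most $\lambda^{-(p_0+\varepsilon)}|Q_r|$ for $\lambda\le 1$. This gives $\|f_r\|_{p(\cdot)}\le |Q_r|^{1/(p_0+\varepsilon)}$. Next we bound the output from below. For $y\in A(Q_r)$ we have $M_{A^{-1}}f_r(y)\ge 1$, so $\|M_{A^{-1}}f_r\|_{p(\cdot)}\ge\|\chi_{A(Q_r)}\|_{p(\cdot)}$. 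Because $A(Q_r)\subset B(y_0,r_0)$ and $p\ge p_1-\varepsilon$ there, the same modular computation gives
\[
\|\chi_{A(Q_r)}\|_{p(\cdot)}\ge |A(Q_r)|^{1/(p_1-\varepsilon)}=\left(|\det A|\,|Q_r|\right)^{1/(p_1-\varepsilon)} .
\]
Here we use $\lambda\le1$, the fact that $|A(Q_r)|$ is small, and that the modular is at most $\lambda^{-(p_1-\varepsilon)}|A(Q_r)|$ otherwise.

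Boundedness would then force
\[
(|\det A|\,|Q_r|)^{1/(p_1-\varepsilon)}\le C\,|Q_r|^{1/(p_0+\varepsilon)} \quad\text{for all small } r .
\]
Since $1/(p_1-\varepsilon)<1/(p_0+\varepsilon)$, letting $|Q_r|\to0$ makes the ratio of the left side to the right side blow up like $|Q_r|^{1/(p_1-\varepsilon)-1/(p_0+\varepsilon)}$. This is a contradiction.

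The main point needing care is the handling of $p_+=\infty$, since $p(\cdot)\in\mathcal{P}$ may be unbounded. Continuity at $x_0$ and $y_0$ already makes $p$ locally bounded near both points, so the modular estimates stay inside these neighbourhoods. If $p_1=\infty$, we replace $p_1-\varepsilon$ by an arbitrarily large finite number $N$ with $p\ge N$ near $y_0$, and the same argument applies. We also need to check the norm–modular comparisons (norm $\le\lambda$ iff modular $\le1$) in the regime where the norms are at most $1$; this is the only technical verification.
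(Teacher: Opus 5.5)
Your proposal is correct and is essentially the paper's proof: you test $M_{A^{-1}}$ on $\chi_{Q_r}$, use $\|\chi_{Q_r}\|_{p(\cdot)}\le |Q_r|^{1/(p_0+\varepsilon)}$, $M_{A^{-1}}\chi_{Q_r}\ge \chi_{AQ_r}$ and $\|\chi_{AQ_r}\|_{p(\cdot)}\ge |AQ_r|^{1/(p_1-\varepsilon)}$, and let $r\to 0$. Your version uses only the one-sided bounds it needs, $p\le p_0+\varepsilon$ near $x_0$ and $p\ge p_1-\varepsilon$ near $Ax_0$, whereas the paper also derives the unused opposite bounds; and the case $p_1=\infty$ you discuss cannot occur, since exponents in $\mathcal{P}(\mathbb{R}^n)$ take values in $[1,\infty)$.
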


\begin{remark}
The previous theorem states that $p(Ax)\leq p(x) \text{ for a.e. } x\in \mathbb{R}^n$ is a necessary condition for the maximal operator associated with the matrix $A^{-1}$ to be bounded on $L^{p(\cdot)}$.
Indeed, $p(Ax)< p(x)$ is a possible condition. In \cite{UV20b} the authors showed that if $p(\cdot)\in \mathcal{P}^{\log}$ such that $p(Ax)\leq p(x)$ a.e. $x\in \mathbb{R}^n$ then $\|g \circ A^{-1}\|_{p(\cdot)}\lesssim \|g \|_{p(\cdot)}$. 
Observe that $M_{A^{-1}}f=Mf \circ A^{-1}$. Using $g=Mf$ in the previous argument with $p(\cdot) \in \mathcal{P}^{\log}$ and $p(Ax)<p(x)$ a.e. $x\in \mathbb{R}^n$, we obtain the boundedness $M_{A^{-1}}$ on $L^{p(\cdot)}(\mathbb{R}^n)$. For example, we can take $A=2I$ and the exponent as in Section 6.
\end{remark}

For the boundedness of the maximal operator $M_{A^{-1}}$, as in the case of constant exponent, we  consider a particular type of matrix, for which the weights ensuring boundedness can be characterized.

\begin{thm}\label{Principal}
    Let $A$ be an invertible matrix such that $A^k=I$ for some $k\in \mathbb{N}$ and $p(\cdot) \in \mathcal{P}^{\log}$ such that $1<p_-\leq p_+<\infty$ and $p(Ax)=p(x)$ a.e. $x \in \mathbb{R}^n$. Then  $w\in \mathcal{A}_{A,p(\cdot)}$  if and only if $M_{A^{-1}}$ is bounded on $L^{p(\cdot)}_w$.
\end{thm}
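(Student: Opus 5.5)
The direction ``bounded $\Rightarrow$ $w\in\mathcal{A}_{A,p(\cdot)}$'' is exactly Theorem \ref{Necesidad-w}, since its hypotheses ($p(\cdot)\in\mathcal{P}^{\log}$, $p(Ax)=p(x)$) are included here. The substance is the converse. Following the constant-exponent argument of \cite{IF26}, the plan is to show that, when $A^k=I$ and $p(\cdot)$ is $A$-invariant,
\[
\mathcal{A}_{A,p(\cdot)}=\{w\in\mathcal{A}_{p(\cdot)} : w(Ax)\lesssim w(x)\ \text{a.e.}\},
\]
and then to combine this with the boundedness of $M$ on $L^{p(\cdot)}_w$ for $w\in\mathcal{A}_{p(\cdot)}$ from \cite{CUDH11,CUFN12}. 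I read $w_A$ as $w\circ A$; if the paper's convention is $w\circ A^{-1}$, the argument is symmetric.

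Granting that characterization, the boundedness is immediate. Since $M_{A^{-1}}f=(Mf)\circ A^{-1}$, we have
\[
\|M_{A^{-1}}f\|_{p(\cdot),w}=\|(Mf)(A^{-1}\cdot)\,w(\cdot)\|_{p(\cdot)}.
\]
The change of variables $x=Ay$ costs only the constant $|\det A|$ in the modular, and $p(Ay)=p(y)$. This gives
\[
\|M_{A^{-1}}f\|_{p(\cdot),w}\lesssim \|Mf\cdot w_A\|_{p(\cdot)}\lesssim \|Mf\cdot w\|_{p(\cdot)}\lesssim \|f\|_{p(\cdot),w},
\]
using $w_A\lesssim w$ pointwise and then the Cruz-Uribe--Fiorenza--Neugebauer theorem.

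So the main work is proving that $w\in\mathcal{A}_{A,p(\cdot)}$ implies both $w\in\mathcal{A}_{p(\cdot)}$ and $w(Ax)\lesssim w(x)$.

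\textbf{Step 1: the pointwise bound $w_A\lesssim w$.} Apply the defining inequality to small cubes $Q$ centered at a point $x$. Hölder's inequality in variable spaces, $|Q|\lesssim\|w^{-1}\chi_Q\|_{p'(\cdot)}\|w\chi_Q\|_{p(\cdot)}$, gives
\[
\|w_A\chi_Q\|_{p(\cdot)}\lesssim \|w\chi_Q\|_{p(\cdot)}
\]
for every cube $Q$. To pass to a pointwise statement, I would localize. For a small cube, $\mathcal{P}^{\log}$ makes $p(\cdot)$ essentially constant, $p_Q\approx p(x)$, so $\|g\chi_Q\|_{p(\cdot)}$ is comparable to $\big(\int_Q g^{p_Q}\big)^{1/p_Q}$ up to constants, at least for weights which are locally integrable with suitable normalization. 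A cleaner route is to test the inequality on subsets $E\subset Q$: applying it to $w_A\chi_E$ would follow from the norm bound $\|M(\cdot)\|$ restricted to sets. Instead, I would use the standard trick $\|w_A\chi_E\|\le$ (a weak-type estimate via $\mathcal{A}_{A,p(\cdot)}$ of the form $\|w_A\chi_E\|_{p(\cdot)}\lesssim \frac{|E|}{|Q|}$-type bounds), in the spirit of \cite{IFRV22}. Letting $Q$ shrink and applying Lebesgue differentiation to $w^{p(\cdot)}$ and $w_A^{p(\cdot)}$ then yields $w(Ax)\lesssim w(x)$ a.e.

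\textbf{Step 2: iterate using $A^k=I$.} Iterating Step 1 gives
\[
w(x)=w(A^kx)\lesssim w(A^{k-1}x)\lesssim\cdots\lesssim w(Ax),
\]
so $w\approx w_A$. Consequently $\|w\chi_Q\|_{p(\cdot)}\approx\|w_A\chi_Q\|_{p(\cdot)}$, and therefore $[w]_{\mathcal{A}_{p(\cdot)}}\lesssim[w]_{\mathcal{A}_{A,p(\cdot)}}$. For the converse inclusion of the characterization, if $w\in\mathcal{A}_{p(\cdot)}$ and $w_A\lesssim w$, then $[w]_{\mathcal{A}_{A,p(\cdot)}}\lesssim[w]_{\mathcal{A}_{p(\cdot)}}$ trivially. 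This direction is not even needed for the theorem.

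\textbf{Main obstacle.} I expect the hard part to be the localization in Step 1: extracting the a.e. inequality $w(Ax)\lesssim w(x)$ from a norm inequality in $L^{p(\cdot)}$ with uniform constants. I would first try to use the local constancy of $p$ on small cubes given by $\mathcal{P}^{\log}$, together with $p(Ax)=p(x)$, to reduce to the constant-exponent argument of \cite{IF26} on each small cube. Failing that, I would derive the set-wise estimate
\[
\|w_A\chi_E\|_{p(\cdot)}\lesssim\|w\chi_E\|_{p(\cdot)}
\]
for arbitrary measurable $E$. This should follow by running the proof of Theorem \ref{Necesidad-w} with $f=w^{-p'(\cdot)/p(\cdot)}\chi_E$-type test functions. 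From that set-wise estimate, the pointwise bound follows by choosing $E=\{w_A>Cw\}$.
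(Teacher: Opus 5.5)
\textbf{Verdict: genuine gap in Step~1.} Your reduction is sound. Once you have $w\in\mathcal{A}_{p(\cdot)}$ and $w(Ax)\le Cw(x)$ a.e.\ with $C$ independent of $x$, the bound follows from the change of variables (Proposition~\ref{UV} applied to $A^{-1}$), the lattice property, and the theorem of \cite{CUFN12}. Your Step~2 also correctly upgrades the pointwise bound to $w\approx w_A$, and hence to $[w]_{\mathcal{A}_{p(\cdot)}}\lesssim[w]_{\mathcal{A}_{A,p(\cdot)}}$.

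However, Step~1, on which everything rests, is not proved. You claim that for small cubes $\|g\chi_Q\|_{p(\cdot)}$ is comparable to $(\int_Q g^{p_Q})^{1/p_Q}$ up to constants. This is false uniformly in $g$: for example, $g=t\chi_E$ with $E\subset Q$ tiny already breaks it. Passing between norm and modular (Lemma~\ref{norma-modular}) costs a factor of the type $\|g\chi_Q\|_{p(\cdot)}^{\pm(p_+(Q)-p_-(Q))}$, which is unbounded as the norm tends to $0$ or $\infty$. Controlling exactly this factor with a constant independent of the point is the whole difficulty. Your fallbacks do not help:
\begin{itemize}
\item the ``norm bound of $M$ restricted to sets'' presupposes boundedness of a maximal operator on $L^{p(\cdot)}_w$, which is what you are proving;
\item ``running the proof of Theorem~\ref{Necesidad-w}'' has the same problem;
\item the set-wise estimate $\|w_A\chi_E\|_{p(\cdot)}\lesssim\|w\chi_E\|_{p(\cdot)}$ for arbitrary $E$ is essentially a restatement of the pointwise bound, not a route to it.
\end{itemize}

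\textbf{How to close the gap.} A short local argument using only $LH_0$ suffices. Start from $\|w_A\chi_Q\|_{p(\cdot)}\le C_0\|w\chi_Q\|_{p(\cdot)}$ with $C_0=K_{p(\cdot)}[w]_{\mathcal{A}_{A,p(\cdot)}}$, which you derived correctly. Apply Lemma~\ref{norma-modular} to cubes $Q$ centered at $x$, small enough that both norms are at most $1$. This gives
\[
\frac{\rho(w_A\chi_Q)}{|Q|}\le \max\{1,C_0\}^{p_+}\Big(\frac{\rho(w\chi_Q)}{|Q|}\Big)^{\frac{p_-(Q)}{p_+(Q)}}|Q|^{\frac{p_-(Q)-p_+(Q)}{p_+(Q)}},
\]
and the last factor is bounded independently of $Q$ by Lemma~\ref{Lema-LH_0}. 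Now let $Q$ shrink to a common Lebesgue point of $w^{p(\cdot)}$ and $w_A^{p(\cdot)}$; both are locally integrable because $w,w_A\in L^{p(\cdot)}_{\mathrm{loc}}$. Since $p(\cdot)$ is continuous, $p_-(Q)/p_+(Q)\to1$. You get $w_A(x)^{p(x)}\le C'w(x)^{p(x)}$ with $C'\ge1$ independent of $x$, hence $w_A\le (C')^{1/p_-}w$ a.e. With this inserted, your proof is complete.

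\textbf{Comparison with the paper.} The paper's main proof avoids the pointwise bound altogether. It uses a Calder\'on--Zygmund decomposition and duality, together with Corollary~\ref{pesos-A-nilp}, Lemma~\ref{w_A in Ap}, left-openness of $\mathcal{A}_{p(\cdot)}$, and the boundedness of $M_{u(\cdot)}$ and $M_{v'(\cdot)}$. Your route is the alternative the authors mention after Theorem~\ref{w_A-w}. There they obtain $w_A\lesssim w$ through the much longer modular estimate of Lemma~\ref{Control modular}, which also uses $LH_\infty$, and they remark that this route gives worse control of the weight constant.
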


Observe that this result characterizes the weights $w$ for which the maximal operator $M_{A^{-1}}$ is bounded on $L^{p(\cdot)}(w)$, extending the results of \cite{CUDH11,CUFN12,IF26}. An application directly related is  the boundedness of the operators $T_{\alpha,m}$ studied in \cite{IFRV22} and their references.

The remainder of this paper is organized as follows. In Section 2, we present some needed preliminaries results. In Section 3, we prove Theorem \ref{Necesidad-w} and Theorem \ref{Necesidad-p}, which establish necessary conditions related to our hypotheses. In Section 4, we prove Theorem \ref{Principal}, which establishes the boundedness of $M_{A^{-1}}$ on $L^{p(\cdot)}_w$, $w\in \mathcal{A}_{A,p(\cdot)}$.

In Section 5, we study several properties of the weights in $\mathcal{A}_{A,p(\cdot)}$ class, extending the properties provided in the constant context.

Finally, in Section 6, we present a method for constructing weights in \(\mathcal{A}_{p(\cdot)}\) and use it to obtain weights in \(\mathcal{A}_{A,p(\cdot)}\) and provide examples satisfying the hypotheses of Theorem \ref{Principal}.  

Throughout this paper, all notation is standard of will defined as needed. In order to emphasize that we are dealing with variable exponents, we will always write $p(\cdot)$ instead of $p$ to denote an exponent function. If we write $F\lesssim G$ then we mean that there exists a positive constant $c$ such that $F\leq c \, G$, the constant $c$ may depend of the matrix, the exponent and dimension  and if we write $A\approx B$ then we mean that there exist $c_1,c_2$ positive constants such that $c_1B\leq A\leq c_2B$.

\section{Preliminaries}

\subsection{Variable Lebesgue spaces}
Let $\mathcal{P}(\R)$ be  the family of  measurable functions $p(\cdot): \R \to [1,\infty)$.
Given $p(\cdot)\in \mathcal{P}(\R)$, for a measurable set $\Omega\subset\mathbb R^n$ of positive measure, we denote
\[
p_-(\Omega)=\operatorname*{ess\,inf}_{x\in\Omega}p(x),
\qquad
p_+(\Omega)=\operatorname*{ess\,sup}_{x\in\Omega}p(x).
\]
When $\Omega={\R}$, we only write $p_{-}$ and $p_{+}$ respectively.

For each $p(\cdot)\in \mathcal{P}(\R)$ we have the conjugate exponent $p'(\cdot)$ and the following modular associated

\begin{equation*}
\mathit{\rho }_{p(\cdot )}\left( f \right) =\int_{\R}  |f(x)|^{p(x)}\,dx.
\end{equation*}

Given $p(\cdot)\in \mathcal{P}(\R)$, let $L^{p(\cdot)}(\R)$ be the Banach space of measurable functions $f$ on $\R$ such that, for some  $\lambda >0$,  the modular $\rho_{p(\cdot)}(f/\lambda)<\infty$ is finite, with norm
\begin{equation*}
\left\Vert f\right\Vert _{p(\cdot )}=\inf \{\lambda >0:~%
\mathit{\rho }_{p(\cdot )}\left( f/\lambda \right) \leq 1\}\text{.}
\end{equation*}

These spaces are known as {\it variable Lebesgue spaces} and generalize the classical Lebesgue spaces $L^p(\R)$.
They have been widely studied lately, see for example \cite{CCUF07,CUFN03,DHHRlibro}.

\begin{mydef}
Given $\Omega \subseteq \mathbb{R}^n$ and a function  $r(\cdot ):\Omega \rightarrow \mathbb{R}$,
we say that $r(\cdot )~$is locally $\log -H\ddot{o}lder$ continuous, we denote by $r(\cdot) \in LH_0(\Omega)$, if there
exists a constant $C_{0}$ such that for all $x,y\in \Omega $, $|x-y|~<\frac{1}{2}$, 
\begin{equation}\label{LH_0}
    |r(x)-r(y)|~\leq \frac{C_{0}}{-\log (|x-y|)}.
\end{equation}

We say that $r(\cdot )$ is $\log -H\ddot{o}lder$ continuous at infinity, we denote by $r(\cdot) \in LH_{\infty}(\Omega)$, if
there exist constants $C_{\infty }$ and $r_{\infty }$ such that 
\begin{equation}\label{LH_infty}
    |r(x)-r_{\infty }|~\leq ~\frac{C_{\infty }}{\log (e+|x|)},
\end{equation}
for all $x\in \Omega $.

We denote by $\mathcal{P}^{\log}$ the family of exponent function $r(\cdot)\in LH_0(\mathbb{R}^n) \cap LH_{\infty}(\mathbb{R}^n)$.
\end{mydef}

In \cite{CUFlibro} the authors prove the following lemmas.

\begin{lemma} \label{Lema-LH_0}
Let $r(\cdot)\in LH_0(\mathbb{R}^n)$. Then there exists a constant
$C>0$, depending of $n$, such that for every cube $Q\subset\mathbb{R}^n$ and almost every
$x\in Q$,
\[
    |Q|^{r(x)-r_+(Q)}\leq C
    \qquad\text{and}\qquad
    |Q|^{r_-(Q)-r(x)}\leq C.
\]
\end{lemma}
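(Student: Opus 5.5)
The plan is to reduce both inequalities to a bound on the oscillation of $r(\cdot)$ over $Q$. Since $r(\cdot)\in LH_0(\mathbb{R}^n)$ is continuous, $r_+(Q)$ and $r_-(Q)$ coincide with the pointwise supremum and infimum of $r$ over $Q$, up to the boundary, which has measure zero. So for a.e. $x\in Q$ both exponents $r(x)-r_+(Q)$ and $r_-(Q)-r(x)$ are nonpositive and bounded in absolute value by
\[
\operatorname{osc}_Q r:=\sup_{y,z\in Q}|r(y)-r(z)| .
\]
Let $\ell$ be the side length of $Q$, so that $|Q|=\ell^n$ and $\operatorname{diam} Q=\sqrt n\,\ell$. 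I will split into three cases according to the size of $\ell$. The constant will depend on $n$ and on $C_0$.

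\emph{Case $|Q|\ge 1$.} Both exponents are $\le 0$, so both quantities are $\le 1$ and there is nothing to prove.

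\emph{Case $\sqrt n\,\ell<1/2$.} Any $y,z\in Q$ satisfy $|y-z|\le \sqrt n\,\ell<1/2$, so \eqref{LH_0} gives $\operatorname{osc}_Q r\le C_0/\log\bigl(1/(\sqrt n\,\ell)\bigr)$. Hence, for a.e. $x\in Q$, both quantities are bounded by
\[
|Q|^{-\operatorname{osc}_Q r}=\exp\bigl(n\log(1/\ell)\,\operatorname{osc}_Q r\bigr)\le \exp\Bigl(nC_0\,\frac{\log(1/\ell)}{\log(1/\ell)-\log\sqrt n}\Bigr).
\]
The key point is that this ratio is uniformly bounded. From $\ell<1/(2\sqrt n)$ we get $\log(1/\ell)>\log(2\sqrt n)$, and therefore
\[
\log(1/\ell)-\log\sqrt n\ \ge\ \log(1/\ell)\,\frac{\log 2}{\log(2\sqrt n)} .
\]
This yields the bound $\exp\bigl(nC_0\log(2\sqrt n)/\log 2\bigr)$.

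\emph{Case $1/(2\sqrt n)\le \ell<1$.} Here $|Q|^{-\operatorname{osc}_Q r}\le (2\sqrt n)^{n\operatorname{osc}_Q r}$, so it suffices to bound $\operatorname{osc}_Q r$ by a constant depending only on $n$ and $C_0$. I would do this by chaining. Any two points of $Q$ are joined by the segment between them, of length at most $\sqrt n$. Dividing this segment into $N=\lceil 4\sqrt n\,\rceil$ consecutive pieces of length $<1/2$, and applying \eqref{LH_0} to each piece, gives
\[
\operatorname{osc}_Q r\le N\,\frac{C_0}{\log 4}.
\]
This last step, showing that $LH_0$ controls the oscillation on cubes of moderate size even though \eqref{LH_0} is only a small-scale condition, is the only point needing care. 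Everything else is the elementary logarithm estimate of the second case.

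Taking the maximum of the constants from the three cases proves the lemma.
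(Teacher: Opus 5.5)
Your proof is correct. The paper does not prove this lemma; it quotes it from \cite{CUFlibro}, so there is no in-text argument to compare against. Your outline is the expected one: the case $|Q|\ge 1$ is trivial, and for small cubes you apply \eqref{LH_0} to points at distance at most $\sqrt n\,\ell$ and absorb $\log(1/\ell)$ against $\log(1/(\sqrt n\,\ell))$. Your explicit bound $\log(2\sqrt n)/\log 2$ for that ratio checks out. Where you go beyond the routine argument is the intermediate range $1/(2\sqrt n)\le\ell<1$. The quick way to handle it is $r_+(Q)-r_-(Q)\le r_+-r_-$, but that needs $r_+<\infty$. That hypothesis is absent from the statement as quoted, although it holds wherever the paper actually uses the lemma, e.g.\ Lemma~\ref{Lema-LH_0-A}. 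Moreover, $LH_0$ exponents on $\mathbb{R}^n$ can be unbounded, e.g.\ $1+\log(e+|x|)$. Your chaining along a segment uses the convexity of $\mathbb{R}^n$ to bound the oscillation on such cubes by $\lceil 4\sqrt n\,\rceil C_0/\log 4$ using $LH_0$ alone. So you prove the lemma exactly as stated, and you are right that $C$ must depend on $C_0$ as well as on $n$. One small simplification: you do not need continuity to compare essential and pointwise extrema. The facts $r_-(Q)\le r(x)\le r_+(Q)$ for a.e.\ $x\in Q$ and $r_+(Q)-r_-(Q)\le\sup_Q r-\inf_Q r$ hold for any measurable $r$.
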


\begin{lemma} \label{Lema-LH_infty}
Given a set $\Omega \subseteq \mathbb{R}^n$, two exponents $r(\cdot)$ and $s(\cdot)$ such that 
\[|s(y)-r(y)| \leq  \frac{C}{\log(e+|y|)}\]
and a non-negative measure $\mu$. Then for every $t\geq 1$ there
exists a constant $C>0$ such that for all  functions $F$ with
$0\leq F(y)\leq1$,
\[
    \int_{\Omega} F(y)^{s(y)}\,d\mu(y)
    \leq
    C\left(
        \int_{\Omega} F(y)^{r(y)}\,d\mu(y)
        +
        \int_{\Omega} (e+|y|)^{-Nr_-}\,d\mu(y)
    \right).
\]
\end{lemma}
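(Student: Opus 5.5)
The plan is to follow the standard proof of this lemma from \cite{CUFlibro}, which rests on the elementary observation that $F^{s}$ and $F^{r}$ differ by a controlled factor wherever $F$ is not too small. We split $\Omega$ into two pieces according to the size of $F$ relative to $(e+|y|)^{-N}$, where $N\ge 1$ is the (implicit) fixed parameter in the statement. Set
\[
E_1=\{y\in\Omega: F(y)\le (e+|y|)^{-N}\},\qquad E_2=\Omega\setminus E_1 .
\]
The claim should then hold with $C$ depending on $C$ (the log-constant), $N$ and $r_-$.

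On $E_1$ we use only $0\le F\le 1$ and $s(y)\ge r_-$ (assuming, as in the intended application, that $s$ and $r$ are comparable at infinity; if needed one replaces $r_-$ by $\min(r_-,s_-)$, or notes that $s_-\ge r_- - C/\log(e)$, absorbing a harmless constant). Then
\[
F(y)^{s(y)}\le F(y)^{r_-}\le (e+|y|)^{-Nr_-},
\]
so $\int_{E_1}F^{s}\,d\mu\le \int_\Omega (e+|y|)^{-Nr_-}\,d\mu$, which is the second term.

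On $E_2$ we have $(e+|y|)^{-N}<F(y)\le 1$, hence $\log(1/F(y))\le N\log(e+|y|)$. We write
\[
F(y)^{s(y)}=F(y)^{r(y)}F(y)^{s(y)-r(y)}\le F(y)^{r(y)}\exp\big(|s(y)-r(y)|\log(1/F(y))\big),
\]
and the hypothesis $|s(y)-r(y)|\le C/\log(e+|y|)$ gives the bound $F^{r}e^{CN}$. Integrating yields $\int_{E_2}F^{s}\,d\mu\le e^{CN}\int_\Omega F^{r}\,d\mu$. Adding the two pieces proves the inequality.

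The main point requiring care is the exponent on $E_1$. One needs a lower bound for $s$ to pass from $F^{s}$ to $(e+|y|)^{-N r_-}$. The natural reading is that $r_-$ denotes the infimum of the relevant exponent, here essentially $s_-$. Since $|s-r|\le C$ everywhere, one can in any case replace $N$ by a slightly larger multiple to compensate. Beyond this bookkeeping the argument is a direct two-region splitting, and the role of $t\ge1$ (playing the role of $N$) is only to fix the decay rate.
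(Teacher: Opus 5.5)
Your splitting at the threshold $R(y)=(e+|y|)^{-N}$ and your estimate on $E_2$ are the standard argument; the paper gives no proof of its own and simply defers to \cite{CUFlibro}. The gap is on $E_1$.

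The inequality $F(y)^{s(y)}\le F(y)^{r_-}$ needs $s(y)\ge r_-$. That is not a hypothesis: only $|s-r|\le C/\log(e+|y|)$ is assumed. None of your proposed repairs closes the gap.
\begin{enumerate}
\item Replacing $r_-$ by $\min\{r_-,s_-\}$, or reading $r_-$ as $s_-$, proves a weaker statement, since $(e+|y|)^{-N\min\{r_-,s_-\}}\ge (e+|y|)^{-Nr_-}$.
\item The global bound $s_-\ge r_--C$ only gives
\[
F(y)^{s(y)}\le (e+|y|)^{-N(r_--C)}=(e+|y|)^{NC}(e+|y|)^{-Nr_-}.
\]
The factor $(e+|y|)^{NC}$ is unbounded in $y$, so it is not a harmless constant.
\item Enlarging the threshold to some $N'$ with $N'(r_--C)\ge Nr_-$ is possible only when $r_->C$, which need not hold.
\end{enumerate}

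The missing step is to use the pointwise log-hypothesis on $E_1$ as well, applied to the threshold $R(y)$ rather than to $F$. Since $t\mapsto t^{s(y)}$ is increasing, $F(y)^{s(y)}\le R(y)^{s(y)}$ on $E_1$. Since $0<R(y)\le 1$ and $r(y)\ge r_-$,
\[
R(y)^{s(y)}=R(y)^{r(y)}R(y)^{s(y)-r(y)}\le R(y)^{r(y)}(e+|y|)^{N|s(y)-r(y)|}\le e^{NC}R(y)^{r(y)}\le e^{NC}(e+|y|)^{-Nr_-}.
\]
This proves the lemma exactly as stated, with the constant $e^{NC}$ in front of both integrals. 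That is why the statement allows a constant in front of the tail integral. With this replacement for your $E_1$ estimate, the proof is complete.
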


It is well known that the Hardy-Littlewood maximal operator is bounded on $L^{p(\cdot)}(\mathbb{R}^n)$ when $r(\cdot)\in \mathcal{P}^{\log}$.

We recall several basic properties of variable Lebesgue spaces. We next state another useful property concerning the norm-modular comparison.

\begin{lemma}
\label{norma-modular}
Let $p(\cdot)\in \mathcal{P}(\R)$ with $p_+<\infty$, and let
$\Omega\subset\mathbb R^n$ be measurable set with
$|\Omega|>0$.
Then the following assertions hold.
\begin{enumerate}
\item If $\|f\chi_\Omega\|_{p(\cdot)}\leq1$, then
\[
\|f\chi_\Omega\|_{p(\cdot)}^{p_+(\Omega)}
\leq
\int_\Omega |f(x)|^{p(x)}\,dx
\leq
\|f\chi_\Omega\|_{p(\cdot)}^{p_-(\Omega)}.
\]

\item If $\|f\chi_\Omega\|_{p(\cdot)}\geq1$, then
\[
\|f\chi_\Omega\|_{p(\cdot)}^{p_-(\Omega)}
\leq
\int_\Omega |f(x)|^{p(x)}\,dx
\leq
\|f\chi_\Omega\|_{p(\cdot)}^{p_+(\Omega)}.
\]
\end{enumerate}
In particular, if $\|f\|_{p(\cdot)}\leq1$, then
\[
\int_{\mathbb R^n}|f(x)|^{p(x)}\,dx
\leq \|f\|_{p(\cdot)}.
\]
\end{lemma}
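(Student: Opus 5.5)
This is the standard norm--modular comparison, and the plan is to prove it directly from the definition of the Luxemburg norm. The key fact is that for $p_+<\infty$ the modular is continuous in the scaling parameter. Set $g=f\chi_\Omega$ and $\lambda=\|g\|_{p(\cdot)}$. Because $p_+<\infty$, the function $\mu\mapsto\rho_{p(\cdot)}(g/\mu)$ is finite for $\mu$ large, strictly decreasing, and continuous by dominated convergence. If $\lambda>0$ (the case $\lambda=0$, i.e.\ $g=0$ a.e., is trivial), then $\rho_{p(\cdot)}(g/\lambda)=1$. Indeed, $\rho\le 1$ follows from letting $\mu\downarrow\lambda$ and using monotone convergence. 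For the reverse inequality, if $\rho(g/\lambda)<1$, continuity would give $\rho(g/\mu)\le1$ for some $\mu<\lambda$, contradicting the definition of the infimum. This identity $\int_\Omega |f/\lambda|^{p(x)}\,dx=1$ is the main step; the rest is pointwise bounds.

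For (1), suppose $\lambda\le1$. For a.e.\ $x\in\Omega$ we have $p_-(\Omega)\le p(x)\le p_+(\Omega)$, hence
\[
\lambda^{p_+(\Omega)}\le\lambda^{p(x)}\le\lambda^{p_-(\Omega)}.
\]
Writing $|f(x)|^{p(x)}=\lambda^{p(x)}|f(x)/\lambda|^{p(x)}$ and integrating over $\Omega$, the identity above gives
\[
\lambda^{p_+(\Omega)}\le\int_\Omega|f|^{p(x)}\,dx\le\lambda^{p_-(\Omega)}.
\]
For (2), when $\lambda\ge1$ the pointwise inequalities reverse, $\lambda^{p_-(\Omega)}\le\lambda^{p(x)}\le\lambda^{p_+(\Omega)}$, and the same integration yields the claim.

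The final assertion follows by taking $\Omega=\mathbb{R}^n$ in (1). If $\|f\|_{p(\cdot)}\le1$, then
\[
\int|f|^{p(x)}\,dx\le\|f\|_{p(\cdot)}^{p_-}\le\|f\|_{p(\cdot)},
\]
since $p_-\ge1$ and the norm is at most $1$.

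The only delicate point is justifying $\rho(g/\lambda)=1$. It needs $p_+<\infty$, both for finiteness of the modular and for continuity. It also uses the fact that $g\neq0$ on a set of positive measure, which makes the modular strictly decreasing. The ambient hypothesis $|\Omega|>0$ guarantees that $p_\pm(\Omega)$ are well defined.
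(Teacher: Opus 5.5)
Your proof is correct and is the standard textbook argument. The paper does not prove this lemma at all; it simply recalls it as a basic fact from the variable Lebesgue space literature. Your key identity $\rho_{p(\cdot)}(g/\|g\|_{p(\cdot)})=1$ is exactly what the paper records separately in Lemma~\ref{norma-1-modular} and the remark after it. The rest of your argument, integrating the pointwise bounds on $\lambda^{p(x)}$ over $\Omega$, is the usual derivation.

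Two small remarks:
\begin{itemize}
\item Part (2) formally allows $\|f\chi_\Omega\|_{p(\cdot)}=\infty$, which you should dispose of separately. It is trivial: if $\rho_{p(\cdot)}(g)<\infty$, then $\rho_{p(\cdot)}(g/\mu)\le\rho_{p(\cdot)}(g)/\mu$ for $\mu\ge1$, which forces a finite norm. So an infinite norm means $\rho_{p(\cdot)}(g)=\infty$, and the inequalities hold trivially.
\item The strict monotonicity of $\mu\mapsto\rho_{p(\cdot)}(g/\mu)$ is never actually used. Monotonicity plus continuity suffices.
\end{itemize}
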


\begin{lemma}
\label{norma-1-modular}
Let $p(\cdot)\in \mathcal{P}(\R)$ with $p_+<\infty$.
If $\|f\|_{p(\cdot)}=1$, then
\[
\int_{\mathbb R^n}|f(x)|^{p(x)}\,dx=1.
\]
\end{lemma}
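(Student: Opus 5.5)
The quickest route is to apply Lemma \ref{norma-modular} with $\Omega=\mathbb R^n$. If $\|f\|_{p(\cdot)}=1$, then case (1) applies (as does case (2)), and it gives
\[
1=\|f\|_{p(\cdot)}^{p_+}\leq \int_{\mathbb R^n}|f(x)|^{p(x)}\,dx\leq \|f\|_{p(\cdot)}^{p_-}=1 .
\]
Here the hypothesis $p_+<\infty$ is needed so that the lower bound is meaningful. This gives the claim directly. Since Lemma \ref{norma-modular} is quoted rather than proved, I would also give the following self-contained argument from the definition of the norm; it also serves as a proof of the relevant part of that lemma.

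\emph{Upper bound.} By the definition of the infimum, there is a sequence $\lambda_k\downarrow 1$ with $\rho_{p(\cdot)}(f/\lambda_k)\leq 1$. (Convexity of the modular in the scalar shows that the set of admissible $\lambda$ is an interval $[1,\infty)$ or $(1,\infty)$.) For each $x$, the functions $|f(x)/\lambda_k|^{p(x)}$ increase to $|f(x)|^{p(x)}$. The monotone convergence theorem then yields $\rho_{p(\cdot)}(f)=\lim_k\rho_{p(\cdot)}(f/\lambda_k)\leq 1$.

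\emph{Lower bound.} For any $0<\lambda<1$, the value $\lambda$ is not admissible, so $\rho_{p(\cdot)}(f/\lambda)>1$. On the other hand, since $\lambda^{-1}>1$ and $p(x)\leq p_+$, we have pointwise
\[
|f(x)/\lambda|^{p(x)}=\lambda^{-p(x)}|f(x)|^{p(x)}\leq \lambda^{-p_+}|f(x)|^{p(x)} .
\]
Integrating gives $1<\rho_{p(\cdot)}(f/\lambda)\leq \lambda^{-p_+}\rho_{p(\cdot)}(f)$, that is, $\rho_{p(\cdot)}(f)>\lambda^{p_+}$. Letting $\lambda\uparrow 1$ gives $\rho_{p(\cdot)}(f)\geq 1$.

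The only delicate point is the lower bound, and it is exactly where $p_+<\infty$ enters. Without it, the factor $\lambda^{-p(x)}$ cannot be bounded uniformly, and indeed the conclusion can fail: when $p_+=\infty$ the modular of a norm-one function can be strictly less than $1$. The upper bound is routine via monotone convergence.
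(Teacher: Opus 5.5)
Your proof is correct. The paper gives no proof of its own here. The lemma is listed, together with Lemma \ref{norma-modular}, among standard facts recalled without proof. Within that framework it follows exactly as in your first line: apply Lemma \ref{norma-modular}(1) with $\Omega=\mathbb{R}^n$ and $\|f\|_{p(\cdot)}=1$.

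Your self-contained argument is the standard textbook one and is the better justification. In the usual development (e.g.\ Cruz-Uribe--Fiorenza), the norm--modular comparison of Lemma \ref{norma-modular} is itself deduced from $\rho_{p(\cdot)}(f/\|f\|_{p(\cdot)})=1$. So the first route is non-circular only relative to the paper's quoted results.

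Both halves of the direct argument check out:
\begin{itemize}
\item The upper bound $\rho_{p(\cdot)}(f)\le 1$ follows from monotone convergence and uses only $p(x)<\infty$ pointwise.
\item The lower bound uses $\lambda^{-p(x)}\le\lambda^{-p_+}$ for $0<\lambda<1$, which is precisely where $p_+<\infty$ enters.
\end{itemize}
Your remark that the conclusion fails for unbounded exponents is also right. In dimension one, take $p(x)=1/x$ on $E=(0,1/2)$ and $p=1$ elsewhere. Then $\|\chi_E\|_{p(\cdot)}=1$, while $\rho_{p(\cdot)}(\chi_E)=1/2$.

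One small correction: the set of admissible $\lambda$ is closed upward simply because $\lambda\mapsto\rho_{p(\cdot)}(f/\lambda)$ is nonincreasing. Convexity is not needed for this.
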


As a consequence of Lemma~\ref{norma-1-modular},
for every nonzero $f\in L^{p(\cdot)}(\mathbb R^n)$,
\[
\rho_{p(\cdot)}
\left(\frac{f}{\|f\|_{p(\cdot)}}\right)=1.
\]

The variable version of Hölder's inequality is the following

\begin{lemma}[H\"older's inequality]
\label{Holder}
Let $p(\cdot) \in \mathcal{P}(\R)$
and let $p'(\cdot)$ be the conjugate exponent of $p(\cdot)$.
There exists a constant $C>0$ such that
\[
\int_{\mathbb R^n}|f(x)g(x)|\,dx
\leq
C\|f\|_{p(\cdot)}\|g\|_{p'(\cdot)}
\]
for every $f\in L^{p(\cdot)}(\mathbb R^n)$ and
$g\in L^{p'(\cdot)}(\mathbb R^n)$.
\end{lemma}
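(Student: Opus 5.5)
The plan is the standard one: normalize, then apply Young's inequality pointwise. By homogeneity of the norm we may assume $\|f\|_{p(\cdot)}>0$ and $\|g\|_{p'(\cdot)}>0$; if either vanishes, then $fg=0$ a.e. and there is nothing to prove. Set $F=f/\|f\|_{p(\cdot)}$ and $G=g/\|g\|_{p'(\cdot)}$. It then suffices to show $\int_{\mathbb R^n}|FG|\,dx\le C$ with $C$ an absolute constant. Tracing constants should give $C=3$, or more sharply $1+\frac{1}{p_-}-\frac{1}{p_+}$, but that refinement is not needed.

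The first step is the modular bound $\rho_{p(\cdot)}(F)\le 1$, and similarly for $G$. If $p_+<\infty$, this is exactly Lemma~\ref{norma-1-modular}. Since the lemma is stated without that hypothesis, I would argue in general. By the definition of the norm as an infimum, $\rho_{p(\cdot)}(F/\lambda)\le 1$ for every $\lambda>1$. Letting $\lambda\downarrow 1$, the integrands $|F|^{p(x)}\lambda^{-p(x)}$ increase to $|F|^{p(x)}$ pointwise, so the monotone convergence theorem gives $\rho_{p(\cdot)}(F)\le1$. I expect this limiting step to be the only genuinely delicate point. The same argument applies to $G$. On the set where $p'=\infty$, the modular of $p'(\cdot)$ must be read with the usual convention as including $\|G\chi_{\{p'=\infty\}}\|_\infty$, and that quantity is then also $\le 1$.

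Next, split $\mathbb R^n=\Omega_1\cup\Omega_*$, where $\Omega_1=\{x:p(x)=1\}$ (so $p'=\infty$ there) and $\Omega_*=\{x:p(x)>1\}$.

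On $\Omega_1$ we have $|G|\le 1$ a.e. by the previous step. Hence
\[\int_{\Omega_1}|FG|\le\int_{\Omega_1}|F|^{p(x)}\,dx\le 1.\]

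On $\Omega_*$ we use Young's inequality $ab\le \frac{a^{p}}{p}+\frac{b^{p'}}{p'}$ with $p=p(x)$ and $p'=p'(x)$ pointwise. This gives
\[\int_{\Omega_*}|FG|\le \int_{\Omega_*}\frac{|F|^{p(x)}}{p(x)}\,dx+\int_{\Omega_*}\frac{|G|^{p'(x)}}{p'(x)}\,dx\le \rho_{p(\cdot)}(F)+\rho_{p'(\cdot)}(G)\le 2.\]

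Adding the two estimates yields $\int|FG|\le 3$. Undoing the normalization then gives the lemma with $C=3$.
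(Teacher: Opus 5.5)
Your proof is correct. The paper gives no proof of this lemma: it quotes it as a standard fact, later writing its constant as $K_{p(\cdot)}$ as in \cite{CUFlibro}. Your argument is exactly the standard textbook proof, and it yields the bound $3$ obtained there when $\{p=\infty\}$ is empty. Its steps are normalization, then $\rho\le 1$ via monotone convergence (which rightly avoids the $p_+<\infty$ hypothesis of Lemma~\ref{norma-1-modular}), then pointwise Young on $\{p>1\}$, and finally the $L^\infty$ convention on $\{p=1\}$, where $p'=\infty$.
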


The following lemma describes a property related to the homogeneity of the norm in the exponent.

\begin{lemma}\label{dilationExpo} 
Let $p(\cdot )\in \mathcal{P}(\mathbb{R}^n)$. Then for all $s$ with $1/p^{-}\leq s<\infty $, 
\begin{equation*}
\Vert |f|^{s}\Vert _{p(\cdot)}=\Vert f\Vert_{sp(\cdot)}^{s}.
\end{equation*}
\end{lemma}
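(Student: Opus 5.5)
The plan is to argue directly from the definition of the Luxemburg norm, by comparing the two modulars under the substitution $\mu=\lambda^{s}$.

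First, the hypothesis $s\geq 1/p_-$ guarantees that $s\,p(x)\geq s\,p_-\geq 1$ for a.e. $x$. Hence $sp(\cdot)\in\mathcal{P}(\mathbb{R}^n)$, and $\|f\|_{sp(\cdot)}$ is a well-defined Luxemburg norm. Since $p(\cdot)$ takes finite values, no $L^\infty$-type convention is needed on the set where $p=\infty$.

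The key identity is that for every $\lambda>0$,
\[
\rho_{p(\cdot)}\!\left(\frac{|f|^{s}}{\lambda^{s}}\right)=\int_{\mathbb{R}^n}\left(\frac{|f(x)|}{\lambda}\right)^{s p(x)}dx=\rho_{sp(\cdot)}\!\left(\frac{f}{\lambda}\right).
\]
Consequently
\[
\{\mu>0:\rho_{p(\cdot)}(|f|^s/\mu)\leq 1\}=\{\lambda^{s}:\lambda>0,\ \rho_{sp(\cdot)}(f/\lambda)\leq 1\}.
\]
Because $t\mapsto t^{s}$ is an increasing continuous bijection of $(0,\infty)$ onto itself, it preserves infima. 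Therefore
\[
\||f|^{s}\|_{p(\cdot)}=\inf\{\lambda^{s}:\rho_{sp(\cdot)}(f/\lambda)\leq1\}=\bigl(\inf\{\lambda:\rho_{sp(\cdot)}(f/\lambda)\leq1\}\bigr)^{s}=\|f\|_{sp(\cdot)}^{s}.
\]

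It remains to treat the degenerate cases. If one of the two sets is empty, then so is the other, and both sides equal $+\infty$ (equivalently, $|f|^s\notin L^{p(\cdot)}$ if and only if $f\notin L^{sp(\cdot)}$). If $f=0$ a.e., both sides are $0$.

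There is no real obstacle here. The only points needing care are checking that $sp(\cdot)$ is an admissible exponent, which is exactly where $s\ge 1/p_-$ is used, and verifying that the infimum passes through the monotone map $t\mapsto t^{s}$.
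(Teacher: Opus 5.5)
Your proof is correct and complete: the modular identity $\rho_{p(\cdot)}(|f|^s/\lambda^s)=\rho_{sp(\cdot)}(f/\lambda)$ holds exactly because $p(\cdot)$ is finite-valued, as you note, and the change of variables $\mu=\lambda^s$, which is monotone and continuous on $[0,\infty)$ since $s\geq 1/p_->0$, carries the Luxemburg infimum through; the hypothesis $s\ge 1/p_-$ is needed only to make $sp(\cdot)$ an admissible exponent. The paper gives no proof of this lemma and simply quotes it as a standard preliminary from the variable Lebesgue space literature; your substitution argument is the standard proof of that fact.
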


In \cite{UV20b} the authors prove the following result.

\begin{prop}\label{UV}\cite{UV20b}
Let $A$ be an invertible matrix and let  $p(\cdot )\in \mathcal{P}(\mathbb{R}^{n})$. If $p(Ax)=p(x)$ a.e. $x$ then 
\[
\left\Vert f\circ A\right\Vert _{p(\cdot )}\leq \max\{1,|\det(A)|^{\frac1{p_-}}\} \left\Vert f\right\Vert
_{p(\cdot )}{,}
\]
for all $f\in L^{p(\cdot)}(\mathbb{R}^{n})$. 
\end{prop}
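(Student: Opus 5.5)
The plan is a direct modular computation: rescale by a suitable $\lambda$, change variables $y=Ax$, and use $p(Ax)=p(x)$ to turn the exponent back into $p(y)$. One point should be flagged first. Carried out carefully, this computation produces the Jacobian factor $|\det A^{-1}|$, so it gives the displayed estimate with $|\det A|^{1/p_-}$ exactly when the Jacobian in the substitution is $|\det A|$. That is the situation for the composition $f\circ A^{-1}$, which is how the proposition is invoked in the Remark after Theorem \ref{Necesidad-p} ($\|g\circ A^{-1}\|_{p(\cdot)}\lesssim\|g\|_{p(\cdot)}$). For $f\circ A$ literally, and $|\det A|<1$, the constant must be replaced by $\max\{1,|\det A|^{-1/p_-}\}$. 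The constant exponent case $\|f\circ A\|_p=|\det A|^{-1/p}\|f\|_p$ already shows this.

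Set $J=|\det A^{-1}|=|\det A|^{-1}$, the Jacobian factor produced by the substitution. Assume $f\neq0$ and fix $\lambda>\|f\|_{p(\cdot)}$, so that $\rho_{p(\cdot)}(f/\lambda)\le1$. Substituting $y=Ax$, $dx=J\,dy$, and using $p(A^{-1}y)=p(y)$ a.e. (which follows from $p(Ax)=p(x)$ a.e., since $A$ preserves null sets), we get
\[
\rho_{p(\cdot)}\Big(\frac{f\circ A}{c\lambda}\Big)=\int_{\mathbb R^n}\Big|\frac{f(Ax)}{c\lambda}\Big|^{p(x)}dx
= J\int_{\mathbb R^n}\Big|\frac{f(y)}{c\lambda}\Big|^{p(y)}dy
\]
for any $c\ge1$.

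Next, since $c\ge1$ and $p(y)\ge p_-$, we have $c^{-p(y)}\le c^{-p_-}$, hence
\[
J\int|f/(c\lambda)|^{p(y)}\,dy\le J\,c^{-p_-}\rho_{p(\cdot)}(f/\lambda)\le J\,c^{-p_-}.
\]
Choose $c=\max\{1,J^{1/p_-}\}$. Then $J\,c^{-p_-}\le1$, so $\|f\circ A\|_{p(\cdot)}\le c\lambda$. Letting $\lambda\downarrow\|f\|_{p(\cdot)}$ gives the bound with constant $\max\{1,J^{1/p_-}\}$.

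When the composition is with $A^{-1}$, or equivalently $J=|\det A|$, this constant is exactly the stated $\max\{1,|\det A|^{1/p_-}\}$. The steps are routine. The only place needing care is the a.e. invariance of $p$ under the substitution and keeping track of which determinant the Jacobian produces; that bookkeeping is the main obstacle and determines the form of the constant.
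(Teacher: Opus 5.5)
Your proof is correct, and your correction of the constant is right. Since $p(A^{-1}y)=p(y)$ a.e., the substitution $y=Ax$ gives the identity $\rho_{p(\cdot)}\bigl((f\circ A)/\lambda\bigr)=|\det A|^{-1}\rho_{p(\cdot)}(f/\lambda)$. So the constant for $f\circ A$ is $\max\{1,|\det A|^{-1/p_-}\}$, and the printed constant fails as soon as $|\det A|<1$: with $n=1$, $A=\tfrac12$, $p\equiv 2$ and $f=\chi_{[0,1]}$, one gets $\|f\circ A\|_2=\sqrt2$ while the stated bound is $1$. The paper gives no proof of its own---it only cites \cite{UV20b}---so there is no alternative route to compare with. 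Rescaling the modular and changing variables, as you do, is the natural argument.

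One adjustment to your framing. In the body of the paper the proposition is used for $f\circ A$ itself, not for $f\circ A^{-1}$: see the proofs of Theorem~\ref{Necesidad-w}, Proposition~\ref{CLAIM 1} and Theorem~\ref{Principal}. The Remark you cite refers to a different result of \cite{UV20b}, under the hypothesis $p(Ax)\le p(x)$. This is harmless. Every use only needs $\|g\circ A\|_{p(\cdot)}\lesssim\|g\|_{p(\cdot)}$ for some constant depending on $A$ and $p_-$, possibly with $p'(\cdot)$ in place of $p(\cdot)$ or $A^{-1}$ in place of $A$. Both inherit the invariance, so in fact $\|f\circ A\|_{p(\cdot)}\approx\|f\|_{p(\cdot)}$, and the reverse direction is what the modular-control lemma in Section 5 actually uses. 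Your corrected constant supplies all of this. Only explicit constants would need adjusting: for instance, the factor $\max\{1,|\det(A^{-1})|\}$ in Section 5 should be $\max\{1,|\det A|^{1/p_-}\}$.
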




\subsection{Weighted Variable Lebesgue spaces}

A locally integrable function $w:\mathbb{R}^n \rightarrow [0, \infty)$ is called a weight. For $p(\cdot) \in \mathcal{P}(\mathbb{R}^n)$, we define the weighted modular

\begin{equation}
    \rho _{p(\cdot ),w}(f)=\int_{\mathbb{R}^{n}}|f(x)|^{p(x)}w(x)dx.
\end{equation}

The space $L_{w}^{p(\cdot )}(\mathbb{R}^{n})$ consists of all measurable
functions $f$ such that $\rho_{p(\cdot ),w}(f/\lambda )<\infty $ for some $%
\lambda >0$. 

The norm $\left\Vert \cdot \right\Vert _{p(\cdot ),w}$ of $L_{w}^{p(\cdot )}(%
\mathbb{R}^{n})$ satisfies 
\begin{equation}
    \left\Vert f\right\Vert _{p(\cdot ),w}=\left\Vert fw\right\Vert _{p(\cdot )}
\end{equation}

Note that since $w\in L^{p(\cdot)}_{\text{loc}}$, all characteristic functions of cubes belong to $L^{p(\cdot)}_{w}$. This implies that the restriction $L^{p(\cdot)}_w(Q)$ is a Banach function space for every cube $Q$. Moreover, the associate space of $L_{w}^{p(\cdot)}(Q)$ is given by $L_{w^{-1}}^{p'(\cdot)}(Q)$, 
see \cite[Theorem 2.7.4]{DHHRlibro}. The following norm conjugate formula holds
for $L_{w}^{p(\cdot)}(Q)$, see \cite[Corollary 2.7.5]{DHHRlibro},
\[
\frac{1}{2}\|f\|_{L_{w}^{p(\cdot)}(Q)}
\leq
\sup_{\substack{
\|g\|_{L_{w^{-1}}^{p'(\cdot)}(Q)}\leq 1
}}
\int_Q |f(x)|\,|g(x)|\,dx
\leq
2\|f\|_{L_{w}^{p(\cdot)}(Q)}.
\]

Therefore, by monotone convergence the norm conjugate formula also holds
for $L_{w}^{p(\cdot)}(\mathbb{R}^n)$. By the same argument $L_{w^{-1}}^{p'(\cdot)}(\mathbb{R}^n)$ is the
associate space of $L_{w}^{p(\cdot)}(\mathbb{R}^n)$.

In \cite{CUP25}, Cruz-Uribe and Penrod proved the a variable reverse Hölder, in particular showed that the class $\mathcal{A}_{p(\cdot)}$ is left open.

\begin{prop}\cite{CUP25}\label{Ap-OpenLeft}
Given $p(\cdot)\in \mathcal{P}^{\log}$ with $p_+<\infty$ and $w\in \mathcal{A}_{p(\cdot)}$, then 
\begin{enumerate}
\item[i)] there exists $r>1$ such that for all $s\in [1,r]$, $w\in A_{sp(\cdot)}$.
\item[ii)] there exists $r>1$ such that for all $s\in [1,r]$, if $q'(\cdot)=sp'(\cdot)$ then $w\in A_{q(\cdot)}$.
\end{enumerate}
\end{prop}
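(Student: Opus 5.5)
The plan is to reduce part i) to a classical reverse Hölder inequality for the auxiliary weight $u(x)=w(x)^{p(x)}$, and then to convert modular estimates into norm estimates using the log-Hölder continuity of $p(\cdot)$. Part ii) will follow from part i) by duality, using that $w\in\mathcal{A}_{p(\cdot)}$ if and only if $w^{-1}\in\mathcal{A}_{p'(\cdot)}$; this is immediate from the symmetric form of $[w]_{\mathcal{A}_{p(\cdot)}}$.

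\textbf{Step 1: $u=w^{p(\cdot)}$ is an $A_\infty$ weight.} I would use the characterization of \cite{CUDH11,CUFN12}: since $p(\cdot)\in\mathcal{P}^{\log}$, the Hardy--Littlewood maximal operator $M$ is bounded on $L^{p(\cdot)}_w$. Testing this boundedness on $f=\chi_E$ with $E\subset Q$ gives $M\chi_E\geq (|E|/|Q|)\chi_Q$. Hence
\[
\frac{|E|}{|Q|}\,\|w\chi_Q\|_{p(\cdot)}\lesssim \|w\chi_E\|_{p(\cdot)}.
\]
Lemma \ref{norma-modular} converts these norms into the modulars $u(Q)$ and $u(E)$, at the cost of powers $p_\pm(Q)$. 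The log-Hölder estimates, Lemma \ref{Lema-LH_0} for small cubes and Lemma \ref{Lema-LH_infty} for large cubes, should let me replace those powers by a single exponent up to harmless constants. The outcome should be an estimate of the form
\[
\frac{|E|}{|Q|}\lesssim \Bigl(\frac{u(E)}{u(Q)}\Bigr)^{\delta},
\]
uniform in $Q$ and $E$. This is an $A_\infty$ condition for $u$.

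\textbf{Step 2: reverse Hölder.} The classical reverse Hölder inequality then gives $r>1$ such that, for $1\le s\le r$,
\[
\frac1{|Q|}\int_Q u^{s}\lesssim \Bigl(\frac1{|Q|}\int_Q u\Bigr)^{s}.
\]
The key observation is that $u(x)^s=w(x)^{sp(x)}$, so this is exactly a modular estimate for $w$ in $L^{sp(\cdot)}$.

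\textbf{Step 3: the $sp(\cdot)$ side.} I would convert the modular estimate of Step 2 back into norms, again using Lemma \ref{norma-modular} and the log-Hölder lemmas. The target is
\[
\|w\chi_Q\|_{sp(\cdot)}\lesssim |Q|^{\frac1{sp_Q}-\frac1{p_Q}}\,\|w\chi_Q\|_{p(\cdot)},
\]
where $p_Q$ is a representative value of $p(\cdot)$ on $Q$ (for example the harmonic mean, or $p_\infty$ for large cubes). I expect this step, together with the analogous conversion in Step 1, to be the main obstacle. The difficulty is keeping the various exponents $p_-(Q)$, $p_+(Q)$ and $p_Q$ under control simultaneously for small cubes, where $LH_0$ applies, and for large cubes, where $LH_\infty$ and the tail term $(e+|y|)^{-Nr_-}$ of Lemma \ref{Lema-LH_infty} must be absorbed. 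I would first treat the normalized case $\|w\chi_Q\|_{p(\cdot)}\le 1$ versus $\ge 1$ separately.

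\textbf{Step 4: the conjugate side and conclusion.} Since $(sp)'(\cdot)\le p'(\cdot)$, the generalized Hölder inequality with $\frac1{(sp)'(\cdot)}=\frac1{p'(\cdot)}+\frac1{t(\cdot)}$, where $\frac1{t(\cdot)}=\frac1{p(\cdot)}-\frac1{sp(\cdot)}$, gives
\[
\|w^{-1}\chi_Q\|_{(sp)'(\cdot)}\lesssim \|\chi_Q\|_{t(\cdot)}\,\|w^{-1}\chi_Q\|_{p'(\cdot)}.
\]
Because $t(\cdot)$ is also log-Hölder, $\|\chi_Q\|_{t(\cdot)}\approx |Q|^{\frac1{p_Q}-\frac1{sp_Q}}$. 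Multiplying this with the estimate of Step 3, the powers of $|Q|$ cancel. This yields
\[
[w]_{\mathcal{A}_{sp(\cdot)}}\lesssim [w]_{\mathcal{A}_{p(\cdot)}},
\]
which is part i). Applying part i) to $w^{-1}\in\mathcal{A}_{p'(\cdot)}$ gives $w^{-1}\in\mathcal{A}_{sp'(\cdot)}$. Setting $q'(\cdot)=sp'(\cdot)$ and dualizing back gives $w\in\mathcal{A}_{q(\cdot)}$, which is part ii).
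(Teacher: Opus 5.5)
The paper does not prove this proposition; it quotes it from \cite{CUP25}. Your architecture is the natural one and is consistent with the \cite{CUP25}-type lemmas the paper adapts in Section 5: $A_\infty$ for $u=w^{p(\cdot)}$, classical reverse H\"older for $u$, conversion back to norms, generalized H\"older on the conjugate side, and duality for ii). Steps 2 and 4 are fine as written, including $\frac1{(sp)'}=\frac1{p'}+\frac1t$ and $\|\chi_Q\|_{t(\cdot)}\approx|Q|^{1/t_Q}$. The gap is that the two conversion steps carry essentially all the content, yet they are only announced, and the tools you name for them do not suffice.

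\textbf{Step 1.} After Lemma \ref{norma-modular}, a $Q$-uniform $A_\infty$ bound in the regime $\|w\chi_Q\|_{p(\cdot)}\le 1$ requires $\|w\chi_Q\|_{p(\cdot)}^{p_-(Q)-p_+(Q)}\le C$ for all cubes. Lemma \ref{Lema-LH_0} bounds $|Q|^{p_-(Q)-p_+(Q)}$, which is a different quantity. The bound you need is a property of the weight, namely the case $A=I$ of Lemma \ref{Lema-w-LH_0-A}. It follows from your $E\subset Q$ inequality by comparing $Q$ with a fixed cube $Q_0$, splitting into cases by the size of $Q$ and its distance to $Q_0$, and using $LH_0$ or $LH_\infty$ accordingly.

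In the regime $\|w\chi_Q\|_{p(\cdot)}\ge1$, the $p_\pm(Q)$ bounds are useless for large cubes near the origin. Already for $w\equiv1$ they only give $|Q|^{p_-(Q)/p_\infty}\lesssim|Q|\lesssim|Q|^{p_+(Q)/p_\infty}$. You need the two-sided comparison $\rho_{p(\cdot)}(w\chi_Q)\approx\|w\chi_Q\|_{p(\cdot)}^{p_\infty}$, proved via Lemma \ref{Lema-LH_infty} with $d\mu=w^{p(\cdot)}dx$. Absorbing the tail term requires $\int w(x)^{p(x)}(e+|x|)^{-N}dx$ to be finite and, after enlarging $N$, less than $1/2$. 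This must again be derived from $\mathcal{A}_{p(\cdot)}$, through polynomial growth of $\|w\chi_{Q(0,R)}\|_{p(\cdot)}$, as in the fourth case of Lemma \ref{Control modular}.

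\textbf{Step 3.} There is a further point you do not address. On the $sp(\cdot)$ side none of these weight lemmas is available, since $w\in\mathcal{A}_{sp(\cdot)}$ is what you are proving. So the upper bound for $\|w\chi_Q\|_{sp(\cdot)}$ must be extracted directly from the modular inequality $\int_Q w^{sp(\cdot)}\lesssim|Q|^{1-s}(\int_Q w^{p(\cdot)})^s$. This uses the $p(\cdot)$-side control above and, for large cubes, a $p_\infty$-comparison for the measure $w^{sp(\cdot)}dx$; its tail integrability comes from the reverse H\"older bound itself. Until these pieces are supplied, neither the $A_\infty$ claim nor the norm reverse H\"older inequality of Step 3 is established.

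\textbf{Two smaller points.} First, in Step 1 you should not invoke the boundedness of $M$ on $L^{p(\cdot)}_w$. It needs $p_->1$, which is not assumed. It also risks circularity, because the Calder\'on--Zygmund sufficiency proof (Theorem \ref{Principal} with $A=I$) uses exactly this proposition. The inequality $\frac{|E|}{|Q|}\|w\chi_Q\|_{p(\cdot)}\lesssim\|w\chi_E\|_{p(\cdot)}$ follows at once from H\"older's inequality and the definition, as in Lemma \ref{Lemma 3.3} with $A=I$. Second, deducing ii) from i) means applying i) to $w^{-1}$ with exponent $p'(\cdot)$. That requires $(p')_+<\infty$, i.e.\ $p_->1$, which holds where the paper uses the result but not under the hypotheses as stated.
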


\subsection{Maximal operator $M_{A^{-1}}$ and weights}

In the context of classical Lebesgue spaces, this type of maximal operator was widely studied, we refer to \cite{IF26,IFR18,IFRV22}.  

In \cite{IFRV22}, the authors introduce the class of weight $\mathcal{A}_{A,p}$, a weight $w$ belong to $\mathcal{A}_{A,p}$ if 
\[ [w]_{\mathcal{A}_{A,p}}= \sup_Q \left(\frac1{|Q|}\int_Q w(Ax) \, dx \right)\left(\frac1{|Q|}\int_Q w(x)^{-\frac1{p-1}} \, dx \right)^{p-1}<\infty\]
where the supremum is over all cubes $Q$ with side parallel to the axes.

They showed that $M_{A^{-1}}$ is bounded from $L^p(w)$ into $L^{p,\infty}(w)$ if and only if $w\in \mathcal{A}_{A,p}$.

Now, we present several properties of this class of weights

\begin{prop}
    \cite{IFR18}
    Let $A$ be a invertible matrix and $w$ be a non-negative function. Then, $M_{A^{-1}}w(x)=M(w_A)(x)$.
\end{prop}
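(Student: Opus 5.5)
The plan is to prove the identity by a linear change of variables inside the averages, and then to compare the two suprema through the correspondence between a cube $Q$ and its image $AQ$. Throughout I take $w_A$ in the sense of \cite{IFR18}, the composition of $w$ with the linear map attached to $A$. The normalization in that convention is exactly what makes the Jacobian factors cancel, so it is the point I expect to be delicate, and I flag it below.

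\textbf{Step 1: rewrite each average.} Let $Q$ be a cube with sides parallel to the axes. The substitution $y=Az$ gives $dy=|\det A|\,dz$, and $y\in AQ$ exactly when $z\in Q$. Hence
\[
\frac{1}{|AQ|}\int_{AQ} w(y)\,dy=\frac{1}{|\det A|\,|Q|}\int_Q w(Az)\,|\det A|\,dz=\frac1{|Q|}\int_Q w_A(z)\,dz .
\]
This identity holds for every measurable $w\geq 0$ by Tonelli, so no integrability issue arises. Thus an average of $w$ over the image set $AQ$ is the same number as an average of $w_A$ over the cube $Q$.

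\textbf{Step 2: match the two families of sets.} Fix $x$. The definition of $M_{A^{-1}}$ is
\[
M_{A^{-1}}w(x)=\sup_{AQ\ni x}\frac1{|Q|}\int_Q w .
\]
Here the admissible sets are indexed by cubes $Q$ whose image $AQ$ contains $x$. The map $Q\mapsto AQ$ is a bijection from axis-parallel cubes onto the family of parallelepipeds $\{AQ\}$, and this bijection preserves containment of the point: $x\in AQ$ if and only if $A^{-1}x\in Q$. Using this bijection together with the identity $M_{A^{-1}}f=Mf\circ A^{-1}$ stated in the introduction, I will rewrite both $M_{A^{-1}}w(x)$ and $M(w_A)(x)$ as suprema over the same index set of cubes. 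Step 1 then identifies the corresponding averages term by term.

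\textbf{Step 3: the two inequalities.} To get ``$\leq$'', I take any cube admissible on the left. Step 1 converts its average into an average of $w_A$ over a cube, and by Step 2 that cube is admissible for $M(w_A)$ at the corresponding point, so each term on the left is bounded by the right-hand side. For ``$\geq$'' I run the same correspondence backwards. Taking suprema gives equality. No doubling or covering argument is needed, since the bijection of Step 2 is exact.

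\textbf{Main obstacle.} The hard part is the bookkeeping in Step 2. I must check that the convention for $w_A$ and the definition of $M_{A^{-1}}$ as written lead to the same base point, with $x$ versus $A^{-1}x$. I must also check that the normalization $|Q|$ versus $|AQ|$ agrees, that is, that the factor $|\det A|$ cancels as in Step 1 and does not survive as a constant. I would first write both sides explicitly in the convention of \cite{IFR18} for $w_A$, and then verify that the substitution $y=Az$ sends one index family to the other.
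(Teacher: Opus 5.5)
Your Step 1 is correct. The gap is in Step 2: you cannot write both sides as suprema over one common index set, and the ``main obstacle'' you flag is not bookkeeping. With the conventions you and the paper use, $w_A=w\circ A$ and $M_{A^{-1}}f=(Mf)\circ A^{-1}$, the two sides are
\[
M_{A^{-1}}w(x)=\sup_{Q\ni A^{-1}x}\frac{1}{|Q|}\int_Q w,
\qquad
M(w_A)(x)=\sup_{Q\ni x}\frac{1}{|AQ|}\int_{AQ} w ,
\]
the second by your Step 1. The first is a supremum of averages of $w$ over cubes containing $A^{-1}x$. The second is a supremum of averages of $w$ over parallelepipeds $AQ$, which contain $Ax$. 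Both the base points and the shapes differ.

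In your ``$\leq$'' step, the term $\frac1{|Q|}\int_Q w$ is an average over a cube, not over an image $AR$. Step 1 applies to it only if $R=A^{-1}Q$ is itself a cube, and then $R\ni A^{-2}x$, not $x$. So even when $A$ maps axis-parallel cubes onto axis-parallel cubes, your argument gives $M_{A^{-1}}w(x)=M(w_A)(A^{-2}x)$. For a general $A$, say a rotation by $\pi/4$, the family $\{AQ\}$ is not the family of cubes at all. So your assertion that no covering argument is needed also fails.

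In fact the identity as stated is false, so no argument can close this gap. (The paper gives no proof here; it only quotes the result.) Take $n=1$, $A=2$, $w=\chi_{[-1,1]}$ and $x=1$. Then $M_{A^{-1}}w(1)=Mw(1/2)=1$, whereas $M(w_A)(1)=Mw(2)=2/3$.

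What your Step 1 does prove, once you add a covering argument, is a two-sided estimate. The covering facts are: each parallelepiped $BQ$ lies in a cube of measure at most $c_B|BQ|$, and each cube $R\ni Bx$ lies in some $BQ$ with $Q\ni x$ and $|BQ|\leq c_B|R|$. Together with Step 1 this gives $M(w\circ B)(x)\approx Mw(Bx)$ for every invertible $B$. Equality holds when $B$ maps axis-parallel cubes onto axis-parallel cubes. Taking $B=A^{-1}$ gives the nearest correct statement, $M_{A^{-1}}w\approx M(w\circ A^{-1})$. Taking $B=A$ gives $M(w_A)\approx (Mw)\circ A$, which is a different function from $M_{A^{-1}}w=(Mw)\circ A^{-1}$.
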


\begin{prop}\cite{IFRV22}\label{Propiedades}
Let $A, B$ be invertible matrices. 
\begin{enumerate}[i)]
\item If $w\in\mathcal{A}_{A,p}$ then $w(Ax)\leq [w]_{\mathcal{A}_{A,p}}w(x)$ a.e. $x\in \mathbb{R}^n$.
\item If $w\in\mathcal{A}_{A,p}$ and $w\in\mathcal{A}_{B,p}$ then $w\in\mathcal{A}_{AB,p}$.
\end{enumerate}
\end{prop}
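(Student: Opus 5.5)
The two parts are handled separately. For i), let $Q$ shrink to a point and use Lebesgue differentiation in the defining quotient of $[w]_{\mathcal{A}_{A,p}}$. For ii), apply i) at the image point $Bx$ and substitute into the definition of $[w]_{\mathcal{A}_{AB,p}}$.

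For i), suppose $w\in\mathcal{A}_{A,p}$ and write $\sigma=w^{-\frac1{p-1}}$.

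First, $w_A$ and $\sigma$ are locally integrable. Each factor in the definition is an average over a cube, and the product is finite. In particular $\sigma<\infty$ a.e., so $w>0$ a.e.

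Next apply the Lebesgue differentiation theorem to $w_A$ and to $\sigma$. For a.e. $x$, and for cubes $Q\ni x$ with side parallel to the axes shrinking to $x$,
\[
\frac1{|Q|}\int_Q w(Ay)\,dy\to w(Ax),\qquad \frac1{|Q|}\int_Q \sigma\to w(x)^{-\frac1{p-1}} .
\]
Passing to the limit in
\[
\Big(\tfrac1{|Q|}\int_Q w_A\Big)\Big(\tfrac1{|Q|}\int_Q\sigma\Big)^{p-1}\le [w]_{\mathcal{A}_{A,p}}
\]
gives $w(Ax)\,w(x)^{-1}\le [w]_{\mathcal{A}_{A,p}}$. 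Since $0<w(x)<\infty$ a.e., this yields $w(Ax)\le [w]_{\mathcal{A}_{A,p}}w(x)$ a.e.

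The only care needed is the exceptional sets. The differentiation statement is applied to the two functions $w\circ A$ and $\sigma$ on $\mathbb{R}^n$ directly, so the exceptional set is a union of two null sets and no change of variables is needed here.

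For ii), apply i) to the matrix $A$ and evaluate it at the point $Bx$. This gives
\[
w(ABx)\le [w]_{\mathcal{A}_{A,p}}\,w(Bx)
\]
for all $x$ outside $B^{-1}(N)$, where $N$ is the null exceptional set from i). Because $B$ is invertible and linear, $B^{-1}(N)$ is again a null set.

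Averaging this inequality over any cube $Q$ gives
\[
\frac1{|Q|}\int_Q w_{AB}\le [w]_{\mathcal{A}_{A,p}}\frac1{|Q|}\int_Q w_B .
\]
Multiplying by $\big(\frac1{|Q|}\int_Q\sigma\big)^{p-1}$ and taking the supremum over $Q$ gives
\[
[w]_{\mathcal{A}_{AB,p}}\le [w]_{\mathcal{A}_{A,p}}[w]_{\mathcal{A}_{B,p}}<\infty .
\]

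The main point requiring attention is in i): justifying that $w>0$ a.e. and that both averages converge simultaneously at a.e. point. Once the local integrability of $\sigma$ and $w_A$ is extracted from the finiteness of the constant, this is routine.
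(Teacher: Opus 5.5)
Your proof is correct. The paper does not prove this statement itself (it is quoted from \cite{IFRV22}), but its variable-exponent counterparts take a different route, and the comparison is worth making.

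For ii), Proposition~\ref{CLAIM 1} never uses a pointwise bound. It inserts the factor $|Q|^{-1}K_{p(\cdot)}\|w_B^{-1}\chi_Q\|_{p'(\cdot)}\|w_B\chi_Q\|_{p(\cdot)}\ge 1$ by H\"older. It then moves $w_A\chi_{B^{-1}Q}$ and $w^{-1}\chi_{B^{-1}Q}$ through $B$ with Proposition~\ref{UV}, and encloses $B^{-1}Q$ in a cube $R$ with $|R|\approx|Q|$. The resulting product bound carries constants depending on $B$, $n$ and $p(\cdot)$, but it needs only $p_+<\infty$ and $p(Bx)=p(x)$. Your inequality $w_{AB}=w_A\circ B\le[w]_{\mathcal{A}_{A,p}}\,w_B$ a.e. gives the exact bound $[w]_{\mathcal{A}_{AB,p}}\le[w]_{\mathcal{A}_{A,p}}[w]_{\mathcal{A}_{B,p}}$ with no geometry at all. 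However, it rests on i). In the variable setting the pointwise bound is Theorem~\ref{w_A-w}, which requires $\mathcal{P}^{\log}$ and the modular estimates of Lemma~\ref{Control modular}, and loses a power of $[[w]]_{\mathcal{A}_{A,p(\cdot)}}$. So your route is the sharp one for constant $p$, and the H\"older route is the economical one for variable $p(\cdot)$.

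For i), the paper's pattern (Lemma~\ref{Lemma 3.3} with $E=Q$, then Lebesgue differentiation) corresponds, for constant $p$ and with $\sigma=w^{-\frac1{p-1}}$ as in your proof, to the following. One uses $1\le\big(\frac1{|Q|}\int_Q w\big)\big(\frac1{|Q|}\int_Q \sigma\big)^{p-1}$ to get $\frac1{|Q|}\int_Q w_A\le[w]_{\mathcal{A}_{A,p}}\frac1{|Q|}\int_Q w$. One then differentiates only $w_A$ and $w$, which are locally integrable for free. You differentiate $w_A$ and $\sigma$ instead. This yields the same constant, but it requires extracting $\sigma\in L^1_{\mathrm{loc}}$ and $w>0$ a.e. from the finiteness of $[w]_{\mathcal{A}_{A,p}}$.

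Since the paper allows weights to vanish, state that step precisely. Finiteness of the product bounds $\int_Q\sigma$ only on cubes with $\int_Q w_A>0$. These include every cube containing a fixed cube on which $w_A$ is not a.e. zero, and every cube lies in such a cube. The only exception is $w=0$ a.e., in which case both parts are trivial.
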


Recently, in \cite{IF26} the first author proved that for some particular matrix $A$ the class $\mathcal{A}_{A,p}$ is a subclass of the classical Muckenhoupt class. More precisely,

\begin{prop}\cite{IF26} \label{clase A nilpotent}
    Let $A$ be an invertible matrix such that $A^k=I$ for some $k\in \mathbb{N}$. Then $\mathcal{A}_{A,p}=\left\{w\in \mathcal{A}_p : w(Ax)\lesssim w(x) \;a.e. \, x\in \mathbb{R}^n\right\}$.
\end{prop}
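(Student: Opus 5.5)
The plan is to prove the two inclusions separately. The only tools needed are Proposition \ref{Propiedades} i) and the relation $A^k=I$, which lets us iterate the pointwise inequality $w(Ax)\lesssim w(x)$ until it reverses. Throughout, $w_A(x)=w(Ax)$.

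\emph{Inclusion $\supseteq$.} Let $w\in\mathcal{A}_p$ with $w(Ax)\leq C\,w(x)$ for a.e. $x$. For every cube $Q$ we have $\frac1{|Q|}\int_Q w_A\leq C\,\frac1{|Q|}\int_Q w$. Multiplying by $\left(\frac1{|Q|}\int_Q w^{-\frac1{p-1}}\right)^{p-1}$ and taking the supremum over cubes gives $[w]_{\mathcal{A}_{A,p}}\leq C\,[w]_{\mathcal{A}_p}<\infty$. This direction does not use $A^k=I$.

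\emph{Inclusion $\subseteq$.} Let $w\in\mathcal{A}_{A,p}$ and set $C=[w]_{\mathcal{A}_{A,p}}$. By Proposition \ref{Propiedades} i), $w(Ax)\leq C\,w(x)$ for all $x$ outside a null set $N$, so the pointwise condition holds. It remains to show $w\in\mathcal{A}_p$, and for this I will reverse the inequality.

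\begin{itemize}
\item For each $j\geq 0$, apply the inequality at the point $A^jx$. The exceptional set is then $A^{-j}N$, which is null because $A^{-j}$ is an invertible linear map. This gives $w(A^{j+1}x)\leq C\,w(A^jx)$ for a.e. $x$.
\item Chaining these for $j=1,\dots,k-1$ and using $A^k=I$ yields
\[
w(x)=w(A^kx)\leq C\,w(A^{k-1}x)\leq\cdots\leq C^{k-1}w(Ax)\qquad\text{a.e. } x.
\]
\item Consequently $\frac1{|Q|}\int_Q w\leq C^{k-1}\frac1{|Q|}\int_Q w_A$ for every cube $Q$. Multiplying by the $w^{-\frac1{p-1}}$ factor and taking the supremum gives $[w]_{\mathcal{A}_p}\leq C^{k-1}[w]_{\mathcal{A}_{A,p}}=[w]_{\mathcal{A}_{A,p}}^{k}$.
\end{itemize}

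There is no serious obstacle here. The only points needing care are the bookkeeping of null sets under the maps $A^j$ and the reliance on Proposition \ref{Propiedades} i) for the pointwise bound. If one wanted a self-contained argument for that bound, it would follow from the Lebesgue differentiation theorem. Shrink cubes $Q$ to a point $x$ in the definition of $[w]_{\mathcal{A}_{A,p}}$: the averages of $w_A$ tend to $w(Ax)$, and by Jensen the term $\left(\frac1{|Q|}\int_Q w^{-\frac1{p-1}}\right)^{p-1}$ is bounded below by quantities tending to $w(x)^{-1}$.
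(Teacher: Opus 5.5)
Your argument is correct. The paper does not prove this proposition itself (it is quoted from \cite{IF26}), but the surrounding material points to a different route for the inclusion $\subseteq$. There, membership in $\mathcal{A}_p$ comes from multiplicativity of the classes, Proposition \ref{Propiedades} ii), or its variable-exponent version Proposition \ref{CLAIM 1}. This is iterated as $\mathcal{A}_{A,p}\cap\mathcal{A}_{A^j,p}\subset\mathcal{A}_{A^{j+1},p}$ until one reaches $\mathcal{A}_{A^k,p}=\mathcal{A}_{I,p}=\mathcal{A}_p$, exactly as in Corollary \ref{pesos-A-nilp}. The pointwise condition is then supplied separately by Proposition \ref{Propiedades} i), or by Theorem \ref{w_A-w} in the variable setting.

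You instead use the pointwise bound for both purposes. Iterating it along the orbit of $x$ under $A$ and using $A^k=I$, you reverse it to $w\le C^{k-1}w_A$, so in fact $w\approx w_A$. The estimate $[w]_{\mathcal{A}_p}\le[w]_{\mathcal{A}_{A,p}}^k$ then drops out by comparing averages.

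For constant $p$ your proof is the more economical one. The pointwise constant is exactly $[w]_{\mathcal{A}_{A,p}}$ by Lebesgue differentiation. You never need the H\"older, change-of-variables and covering step used for multiplicativity; in the paper's proof of Proposition \ref{CLAIM 1} that step costs a constant depending on how the matrices distort cubes.

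The multiplicativity route pays off for variable exponents. Proposition \ref{CLAIM 1} needs only $p_+<\infty$ and $p(Bx)=p(x)$, and it works purely with norms over cubes. Your reversal trick would there have to rest on Theorem \ref{w_A-w}, which requires $p(\cdot)\in\mathcal{P}^{\log}$ and a lengthy modular argument, and produces a much worse, weight-dependent constant.
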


Moreover, in \cite{IF26} the following characterization for the weights was proved
\begin{thm}\cite{IF26} Let $A$ be an invertible matrix such that $A^k=I$ for some $k\in \mathbb{N}$. Then $M_{A^{-1}}$ is bounded on $L^p(w)$ if and only if $w\in \mathcal{A}_{A,p}$.
\end{thm}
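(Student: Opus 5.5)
The plan is to split the equivalence into its two directions. For necessity we use the weak-type characterization of \cite{IFRV22}. For sufficiency we use the structural description of $\mathcal{A}_{A,p}$ from Proposition~\ref{clase A nilpotent}, which reduces the problem to Muckenhoupt's theorem for the Hardy--Littlewood maximal operator $M$. Throughout, $w_A(x)=w(Ax)$, as in the definition of $\mathcal{A}_{A,p}$.

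\emph{Necessity.} Suppose $M_{A^{-1}}$ is bounded on $L^p(w)$. Chebyshev's inequality gives
\[
w(\{M_{A^{-1}}f>\lambda\})\le \lambda^{-p}\int (M_{A^{-1}}f)^p w \lesssim \lambda^{-p}\int |f|^p w,
\]
so $M_{A^{-1}}$ is bounded from $L^p(w)$ into $L^{p,\infty}(w)$. By the characterization of \cite{IFRV22} recalled above, $w\in\mathcal{A}_{A,p}$. This direction does not even use the hypothesis $A^k=I$.

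\emph{Sufficiency.} Let $w\in\mathcal{A}_{A,p}$. By Proposition~\ref{clase A nilpotent}, $w\in\mathcal{A}_p$ and $w(Ax)\le C\,w(x)$ for a.e.\ $x$. Since $A^k=I$, we have $|\det A|^k=1$, hence $|\det A|=1$. Using $M_{A^{-1}}f(x)=Mf(A^{-1}x)$ and the change of variables $x=Ay$,
\[
\int_{\mathbb{R}^n} (M_{A^{-1}}f(x))^p w(x)\,dx=|\det A|\int_{\mathbb{R}^n} (Mf(y))^p w(Ay)\,dy\le C\int_{\mathbb{R}^n} (Mf)^p w .
\]
Since $w\in\mathcal{A}_p$ and $1<p<\infty$, Muckenhoupt's theorem gives
\[
\int_{\mathbb{R}^n} (Mf)^p w\lesssim \int_{\mathbb{R}^n}|f|^p w .
\]
This proves $\|M_{A^{-1}}f\|_{L^p(w)}\lesssim\|f\|_{L^p(w)}$.

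\emph{Main obstacle.} The real difficulty is the inclusion $\mathcal{A}_{A,p}\subset\mathcal{A}_p$ contained in Proposition~\ref{clase A nilpotent}, which we are taking as given. If one had to prove it, one would iterate the $\mathcal{A}_{A,p}$ condition along the orbit $A, A^2,\dots,A^k=I$, using Proposition~\ref{Propiedades} ii), to obtain the $\mathcal{A}_{A^j,p}$ conditions. One would then combine them with the pointwise bound $w(Ax)\lesssim w(x)$ from Proposition~\ref{Propiedades} i). Once that proposition is available, the argument above is only a change of variables plus Muckenhoupt's theorem.
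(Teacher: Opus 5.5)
Your argument is correct. Its inputs are the weak-type characterization of \cite{IFRV22}, Proposition \ref{clase A nilpotent}, a change of variables and Muckenhoupt's theorem.

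The paper does not prove this statement; it is quoted from \cite{IF26}. The natural comparison is the proof of Theorem \ref{Principal}. That theorem contains this one as the constant-exponent case, after replacing $w$ by $w^{1/p}$, because the paper uses the convention $\|f\|_{p(\cdot),w}=\|fw\|_{p(\cdot)}$. Its proof differs from yours in both directions:
\begin{itemize}
\item For necessity, the paper tests the strong-type inequality on $f=\lambda(w^{-1}/\lambda)^{p'}\chi_Q$ (Theorem \ref{Necesidad-w}). This gives $[w]_{\mathcal{A}_{A,p(\cdot)}}\lesssim\|M_{A^{-1}}\|$ directly, without passing through the weak-type characterization.
\item For sufficiency, the paper never uses the pointwise comparison $w_A\lesssim w$. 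It performs a Calder\'on--Zygmund decomposition at heights $b^k$, dualizes against $g\in L^{p'(\cdot)}(w^{-1})$, and moves the matrix onto $g_A$. It then controls the averages using $w,w_A\in\mathcal{A}_{p(\cdot)}$ (Corollary \ref{pesos-A-nilp}, Lemma \ref{w_A in Ap}), left-openness, and the boundedness of $M_{u(\cdot)}$ and $M_{v'(\cdot)}$.
\end{itemize}

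Your factorization $M_{A^{-1}}f=(Mf)\circ A^{-1}$ works because composition with $A^{-1}$ is bounded on $L^p(w)$ exactly when $w_A\lesssim w$ (given $|\det A|=1$). This is the alternative the authors mention after Theorem \ref{w_A-w}. For constant $p$ it is the cheaper route: the pointwise bound comes for free from Proposition \ref{Propiedades} i), and you get the explicit estimate $\|M_{A^{-1}}\|_{L^p(w)\to L^p(w)}\le[w]_{\mathcal{A}_{A,p}}^{1/p}\|M\|_{L^p(w)\to L^p(w)}$. The paper's direct argument pays off for variable exponents. There, $w_A\lesssim w$ is only reachable through Section 5 (Lemma \ref{Control modular}, Theorem \ref{w_A-w}), with large exponents and a non-scale-invariant factor $\|w_A\chi_{Q_0}\|_{p(\cdot)}$.

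As you essentially note yourself, you can also drop Proposition \ref{clase A nilpotent}, which comes from the same source \cite{IF26} as the statement you are proving. Induction on Proposition \ref{Propiedades} ii) with $B=A^{j-1}$ gives $w\in\mathcal{A}_{A^j,p}$ for every $j$, hence $w\in\mathcal{A}_{A^k,p}=\mathcal{A}_p$. Part i) then supplies the pointwise bound. The whole proof then rests only on \cite{IFRV22} and Muckenhoupt's theorem.
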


\section{Necessary condition}
In this section, we prove the necessary conditions Theorem \ref{Necesidad-w} and Theorem \ref{Necesidad-p}.

\begin{proof}[Proof of Theorem \ref{Necesidad-w}]
Since $M_{A^{-1}}$ is bounded on $L^{p(\cdot)}(w)$ we have 
for each $f\in L^{p(\cdot)}(w)$, 
\[\|M_{A^{-1}}f\|_{L^{p(\cdot)}(w)}\lesssim \|f\|_{L^{p(\cdot)}(w)}\]

Since $p(Ax)=p(x)$ a.e. $x$, by Proposition \ref{UV} we have
\[\|M_{A^{-1}}f\|_{L^{p(\cdot)}(w)}=
\|wM_{A^{-1}}f\|_{L^{p(\cdot)}}
\gtrsim \| \left[w M_{A^{-1}}f\right] \circ A\|_{L^{p(\cdot)}}
=\|w_A Mf\|_{L^{p(\cdot)}}
\]

For any cube $Q\subset \mathbb{R}^n$ we have $Mf\geq  \left(\frac1{|Q|} \int_Q |f|\right) \chi_Q$ and obtain 
\[\|w_A Mf\|_{L^{p(\cdot)}}\geq \left( \frac1{|Q|} \int_Q |f| \right)\|\chi_Q w_A\|_{p(\cdot)}\]

Now, taking 
\[f(x)=\lambda \left(\frac{w^{-1}(x)}{\lambda}\right)^{p'(x)} \chi_Q(x)
\]
with $\lambda$ to determine later. 
Observe that
\[\frac1{|Q|} \int_Q |f|
=\frac{\lambda}{|Q|}  \int_Q \left(\frac{w^{-1}(x)}{\lambda}\right)^{p'(x)}\,dx
=\frac{\lambda}{|Q|} \rho_{p'(\cdot)} \left(\frac{w^{-1}\chi_Q}{\lambda}\right)
\]
and 
\begin{align*}
\rho_{p(\cdot)}(fw)
&= \int_Q \left(\frac{w^{-p'(x)}(x)w(x)}{\lambda^{p'(x)-1}}\right)^{p(x)} \, dx
=\int_Q \left(\frac{w^{-1}(x)}{\lambda}\right)^{p'(x)} \,dx
\\&=\rho_{p'(\cdot)} \left(\frac{w^{-1}\chi_Q}{\lambda}\right)
\end{align*}


Now, taking $\lambda = \|w^{-1}\chi_Q\|_{p'(\cdot)}$. 
Then, $\rho_{p(\cdot)}(fw) = \rho_{p'(\cdot)} \left(\frac{w^{-1}\chi_Q}{\|w^{-1}\chi_Q\|_{p'(\cdot)}}\right)=1$ 
this implies $\|fw\|_{p(\cdot)} \leq 1$.

On the other hand, 
\begin{align*}
1\gtrsim \|M_{A^{-1}}f\|_{L^p(\cdot)(w)} 
&\geq \left( \frac1{|Q|} \int_Q |f| \right)\|\chi_Q w_A\|_{p(\cdot)}
\\&= \frac{\lambda}{|Q|} \rho_{p'(\cdot)} \left(\frac{w^{-1}\chi_Q}{\lambda}\right)\|\chi_Q w_A\|_{p(\cdot)}
\\& = \frac{\|w^{-1}\chi_Q\|_{p'(\cdot)}}{|Q|} \rho_{p'(\cdot)} \left(\frac{w^{-1}\chi_Q}{\|w^{-1}\chi_Q\|_{p'(\cdot)}}\right)\|\chi_Q w_A\|_{p(\cdot)}
\\& = \frac{\|w^{-1}\chi_Q\|_{p'(\cdot)}}{|Q|} \|\chi_Q w_A\|_{p(\cdot)}
\end{align*}

Now, taking account the estimates above we obtain
\begin{align*}
\frac{\|w^{-1}\chi_Q\|_{p'(\cdot)}}{|Q|} \|\chi_Q w_A\|_{p(\cdot)}
&\leq \|M_{A^{-1}}f\|_{L^p(\cdot)(w)} 
\\&\leq \|M_{A^{-1}}\|_{L^{p(\cdot)}(w)\to L^{p(\cdot)}(w)} \|fw\|_{p(\cdot)} \\& \leq \|M_{A^{-1}}\|_{L^{p(\cdot)}(w)\to L^{p(\cdot)}(w)}
\end{align*}
for any cube $Q$. Then $[w]_{\mathcal{A}_{A,p(\cdot)}}\leq \|M_{A^{-1}}\|_{L^{p(\cdot)}(w)\to L^{p(\cdot)}(w)}$ and $w \in \mathcal{A}_{A,p(\cdot)}$.

\end{proof}

\begin{proof}[Proof of Theorem \ref{Necesidad-p}]

Set
\begin{equation*}
p_0=p(x_0),\qquad p_1=p(Ax_0),
\end{equation*}
so that $p_0<p_1$. Fix $\varepsilon>0$ sufficiently small such that
\[
p_0+\varepsilon<p_1-\varepsilon.
\]

By the continuity of $p$ at $x_0$, there exists $r_0>0$ such that,
for $0<r<r_0$ and $x\in Q_r:=Q(x_0,r)$,
\begin{equation}\label{1}
p_0-\varepsilon\leq p(x)\leq p_0+\varepsilon.
\end{equation}
Similarly, by the continuity of $p$ at $Ax_0$, after decreasing $r_0$
if necessary, we have
\begin{equation}\label{2}
p_1-\varepsilon\leq p(x)\leq p_1+\varepsilon,
\qquad x\in AQ_r.
\end{equation}
Indeed, $AQ_r$ shrinks to $Ax_0$ as $r\to0$.

For $r$ sufficiently small, we may assume that
\[
|Q_r|<1
\qquad\text{and}\qquad
|AQ_r|<1.
\]

First, observe that
\[
\chi_{Q_r}\in L^{p(\cdot)}(\mathbb{R}^n),
\]
since
\[
\rho_{p(\cdot)}(\chi_{Q_r})
=
\int_{\mathbb{R}^n}|\chi_{Q_r}(x)|^{p(x)}\,dx
=
|Q_r|<\infty.
\]

We now estimate $\|\chi_{Q_r}\|_{p(\cdot)}$. Let
\[
\lambda=|Q_r|^{1/(p_0+\varepsilon)}.
\]
Since $|Q_r|<1$, we have $0<\lambda<1$. By \eqref{1},
\[
\lambda^{-p(x)}
\leq
\lambda^{-(p_0+\varepsilon)}
=
|Q_r|^{-1},
\qquad x\in Q_r.
\]
Hence
\[
\int_{Q_r}\lambda^{-p(x)}\,dx
\leq
|Q_r|\,|Q_r|^{-1}
=1,
\]
and therefore
\begin{equation}\label{3}
\|\chi_{Q_r}\|_{p(\cdot)}
\leq
|Q_r|^{1/(p_0+\varepsilon)}.
\end{equation}

On the other hand, take
\[
\lambda=|Q_r|^{1/(p_0-\varepsilon)}.
\]
Again, $0<\lambda<1$, and by \eqref{1},
\[
\lambda^{-p(x)}
\geq
\lambda^{-(p_0-\varepsilon)}
=
|Q_r|^{-1},
\qquad x\in Q_r.
\]
Thus
\[
\int_{Q_r}\lambda^{-p(x)}\,dx\geq1,
\]
which implies
\begin{equation}\label{4}
\|\chi_{Q_r}\|_{p(\cdot)}
\geq
|Q_r|^{1/(p_0-\varepsilon)}.
\end{equation}
Consequently,
\begin{equation}\label{5}
|Q_r|^{1/(p_0-\varepsilon)}
\leq
\|\chi_{Q_r}\|_{p(\cdot)}
\leq
|Q_r|^{1/(p_0+\varepsilon)}.
\end{equation}

Similarly, using \eqref{2}, we obtain
\begin{equation}\label{6}
|AQ_r|^{1/(p_1-\varepsilon)}
\leq
\|\chi_{AQ_r}\|_{p(\cdot)}
\leq
|AQ_r|^{1/(p_1+\varepsilon)}.
\end{equation}

Now, if $x\in AQ_r$, then by the definition of $M_{A^{-1}}$,
\[
M_{A^{-1}}\chi_{Q_r}(x)
\geq
\frac{1}{|Q_r|}
\int_{Q_r}\chi_{Q_r}(y)\,dy
=1.
\]
Therefore,
\[
M_{A^{-1}}\chi_{Q_r}
\geq
\chi_{AQ_r},
\]
and hence
\begin{equation}\label{7}
\|M_{A^{-1}}\chi_{Q_r}\|_{p(\cdot)}
\geq
\|\chi_{AQ_r}\|_{p(\cdot)}.
\end{equation}

Combining \eqref{5}, \eqref{6}, and \eqref{7}, we obtain
\[
\frac{
\|M_{A^{-1}}\chi_{Q_r}\|_{p(\cdot)}
}{
\|\chi_{Q_r}\|_{p(\cdot)}
}
\geq
\frac{
|AQ_r|^{1/(p_1-\varepsilon)}
}{
|Q_r|^{1/(p_0+\varepsilon)}
}.
\]
Since $A$ is invertible,
\[
|AQ_r|=|\det A|\,|Q_r|,
\]
and there
\begin{equation}\label{8}
\frac{
\|M_{A^{-1}}\chi_{Q_r}\|_{p(\cdot)}
}{
\|\chi_{Q_r}\|_{p(\cdot)}
}
\geq
|\det A|^{1/(p_1-\varepsilon)}
|Q_r|^{
\frac{1}{p_1-\varepsilon}
-
\frac{1}{p_0+\varepsilon}
}.
\end{equation}

By our choice of $\varepsilon$,
\[
p_0+\varepsilon<p_1-\varepsilon,
\]
and hence
\[
\frac{1}{p_1-\varepsilon}
-
\frac{1}{p_0+\varepsilon}
<0.
\]
Since $|Q_r|\to0$ as $r\to0$, it follows from \eqref{8} that
\begin{equation}\label{9}
\frac{
\|M_{A^{-1}}\chi_{Q_r}\|_{p(\cdot)}
}{
\|\chi_{Q_r}\|_{p(\cdot)}
}
\longrightarrow+\infty
\qquad\text{as }r\to0.
\end{equation}

Suppose, by contradiction, that $M_{A^{-1}}$ were bounded on
$L^{p(\cdot)}(\mathbb{R}^n)$. Then there would exist a constant
$C>0$, independent of $f$, such that
\[
\|M_{A^{-1}}f\|_{p(\cdot)}
\leq C\|f\|_{p(\cdot)}
\]
for every $f\in L^{p(\cdot)}(\mathbb{R}^n)$. In particular, taking
$f=\chi_{Q_r}$, we would have
\[
\frac{
\|M_{A^{-1}}\chi_{Q_r}\|_{p(\cdot)}
}{
\|\chi_{Q_r}\|_{p(\cdot)}
}
\leq C
\]
for every sufficiently small $r$, contradicting \eqref{9}.

Therefore,
\[
M_{A^{-1}}
\text{ is not bounded on }
L^{p(\cdot)}(\mathbb{R}^n).
\]
\end{proof}

\section{Sufficient condition}

For the proof of the sufficient condition we need to define the following operators. Let $p(\cdot )\in \mathcal{P}(\mathbb{R}^{n})$. For every cube $Q\subset \mathbb{R}^{n}$ we define the average 
\begin{equation} \label{average}
f_{p(\cdot ),Q}
:=\frac{\left\Vert \chi _{Q}f\right\Vert
_{p(\cdot )}}{\left\Vert \chi _{Q}\right\Vert _{p(\cdot )}}
\end{equation}
and the maximal operator
\begin{equation} \label{averageMax}
M_{p(\cdot)}(f)(x)=\sup_{Q\ni x} f_{p(\cdot ),Q}
\end{equation}
\subsection{Auxiliary results}

In \cite{CUDH11} D. Cruz Uribe et al. studied this average and proved the following result

\begin{lemma}\label{LemaCUDH}\cite{CUDH11}
Let $p(\cdot)\in \mathcal{P}(\mathbb{R}^n)$ with $p_+<\infty$, $1\in A_{p(\cdot )}$ and let $w$ be a weight. Then $w\in A_{p(\cdot )}$
if and only if
\[1\lesssim w_{p(\cdot),Q}(w^{-1})_{p'(\cdot),Q} \lesssim [w]_{\mathcal{A}_{p(\cdot)}}.\]

for all cubes $Q$ in $\mathbb{R}^n$.
\end{lemma}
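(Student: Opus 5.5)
The statement to prove is Lemma \ref{LemaCUDH}: for $p(\cdot)\in\mathcal{P}(\mathbb{R}^n)$ with $p_+<\infty$ and $1\in\mathcal{A}_{p(\cdot)}$, a weight $w$ lies in $\mathcal{A}_{p(\cdot)}$ if and only if $1\lesssim w_{p(\cdot),Q}(w^{-1})_{p'(\cdot),Q}\lesssim[w]_{\mathcal{A}_{p(\cdot)}}$ uniformly over cubes. The plan is to reduce everything to the product $\|\chi_Q\|_{p(\cdot)}\|\chi_Q\|_{p'(\cdot)}$. By the definition \eqref{average},
\[
w_{p(\cdot),Q}(w^{-1})_{p'(\cdot),Q}=\frac{|Q|^{-1}\|w\chi_Q\|_{p(\cdot)}\|w^{-1}\chi_Q\|_{p'(\cdot)}}{|Q|^{-1}\|\chi_Q\|_{p(\cdot)}\|\chi_Q\|_{p'(\cdot)}}.
\]
So the whole lemma follows once the denominator $D(Q):=|Q|^{-1}\|\chi_Q\|_{p(\cdot)}\|\chi_Q\|_{p'(\cdot)}$ is shown to be comparable to $1$, uniformly in $Q$.

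\textbf{Two-sided bound on $D(Q)$.} The upper bound $D(Q)\le[1]_{\mathcal{A}_{p(\cdot)}}$ is exactly the hypothesis $1\in\mathcal{A}_{p(\cdot)}$. For the lower bound, apply H\"older's inequality (Lemma \ref{Holder}) to $\chi_Q\cdot\chi_Q$:
\[
|Q|=\int\chi_Q\chi_Q\le C\|\chi_Q\|_{p(\cdot)}\|\chi_Q\|_{p'(\cdot)},
\]
so $D(Q)\ge 1/C$. Hence $D(Q)\approx1$ with constants depending only on $p(\cdot)$.

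\textbf{Both directions.} Write $N(Q):=|Q|^{-1}\|w\chi_Q\|_{p(\cdot)}\|w^{-1}\chi_Q\|_{p'(\cdot)}$, so the product equals $N(Q)/D(Q)$.
\begin{itemize}
\item The same H\"older argument, applied to $\int_Q w\cdot w^{-1}=|Q|$, gives $N(Q)\ge 1/C$ for every weight $w$. Combined with $D(Q)\le[1]_{\mathcal{A}_{p(\cdot)}}$, this yields the lower bound $1\lesssim N(Q)/D(Q)$.
\item If $w\in\mathcal{A}_{p(\cdot)}$, then $N(Q)\le[w]_{\mathcal{A}_{p(\cdot)}}$ and $D(Q)\ge1/C$, so $N(Q)/D(Q)\le C[w]_{\mathcal{A}_{p(\cdot)}}$.
\item Conversely, if the product is uniformly bounded by some constant $K$, then $N(Q)\le K\,D(Q)\le K[1]_{\mathcal{A}_{p(\cdot)}}$. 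Taking the supremum over cubes gives $w\in\mathcal{A}_{p(\cdot)}$.
\end{itemize}
In this converse direction the right-hand bound is only meaningful as ``some finite constant'', since $[w]_{\mathcal{A}_{p(\cdot)}}$ is not known in advance to be finite.

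\textbf{Main obstacle.} There is no real obstacle. The only point needing care is using H\"older's inequality in the variable setting with a constant independent of $Q$, which Lemma \ref{Holder} provides. If $p_-=1$, one must interpret $p'=\infty$ suitably, but Lemma \ref{Holder} covers this.
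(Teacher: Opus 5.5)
Your argument is correct and complete. The hypothesis $1\in\mathcal{A}_{p(\cdot)}$ together with H\"older's inequality gives $\|\chi_Q\|_{p(\cdot)}\|\chi_Q\|_{p'(\cdot)}\approx|Q|$ uniformly in $Q$, so the product of averages is comparable to $|Q|^{-1}\|w\chi_Q\|_{p(\cdot)}\|w^{-1}\chi_Q\|_{p'(\cdot)}$, and both directions follow as you show. The paper does not prove this lemma itself; it only cites \cite{CUDH11}, and your reduction to the comparability $\|\chi_Q\|_{p(\cdot)}\|\chi_Q\|_{p'(\cdot)}\approx|Q|$ is the standard justification of this equivalence.
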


Diening et al. proved the following result for the maximal operator defined in \eqref{averageMax}.

\begin{thm}\cite{DHHRlibro}\label{Mq bounded}
Let $p(\cdot),q(\cdot),s(\cdot) \in \mathcal{P}^{\log}(\mathbb{R}^n)$ such that $p_+<\infty$,  $p(\cdot)=q(\cdot)s(\cdot)$ and $s_->1$. Then $M_{q(\cdot)}$ is bounded on $L^{p(\cdot)}(\mathbb{R}^n)$.
\end{thm}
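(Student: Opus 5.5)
The plan is to reduce the boundedness of $M_{q(\cdot)}$ on $L^{p(\cdot)}$ to the boundedness of the Hardy--Littlewood maximal operator $M$ on $L^{s(\cdot)}$. This is legitimate because $s(\cdot)\in\mathcal{P}^{\log}$ and $s_->1$, and since $p_+<\infty$ we also have $s_+<\infty$. By homogeneity it suffices to prove a modular estimate: there is $C$ such that $\rho_{p(\cdot)}(M_{q(\cdot)}f)\le C$ whenever $\|f\|_{p(\cdot)}\le 1$. For such $f$ set $g=|f|^{q(\cdot)}$. Since $p=qs$, the modular identity $\rho_{s(\cdot)}(g)=\rho_{p(\cdot)}(f)$ holds, so $\rho_{s(\cdot)}(g)\le 1$ and hence $\|g\|_{s(\cdot)}\le1$. (This is the content of Lemma~\ref{dilationExpo}, read at the level of modulars.)

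The core step is a pointwise inequality of the form
\[
\bigl(M_{q(\cdot)}f(x)\bigr)^{q(x)}\lesssim Mg(x)+h(x),\qquad h(x)=(e+|x|)^{-N},
\]
valid for $\|f\|_{p(\cdot)}\le1$, with $N$ large. Given this, the conclusion is short:
\[
\rho_{p(\cdot)}(M_{q(\cdot)}f)=\int \bigl((M_{q(\cdot)}f)^{q(x)}\bigr)^{s(x)}dx\lesssim \int (Mg)^{s(x)}+\int h^{s(x)}\lesssim 1 .
\]
The first integral is bounded because $M$ is bounded on $L^{s(\cdot)}$ and $\|g\|_{s(\cdot)}\le1$. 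The second is finite because $h\le 1$ and $N s_->n$.

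To prove the pointwise inequality, fix $x\in Q$ and set $\mu=\|\chi_Qf\|_{q(\cdot)}$, so that $\int_Q(|f|/\mu)^{q(y)}dy=1$ by Lemma~\ref{norma-1-modular}. The argument has three parts.
\begin{enumerate}
\item First I would show $\|\chi_Q\|_{q(\cdot)}\approx|Q|^{1/q(x)}$ uniformly in $x\in Q$. For $|Q|\le1$ this follows from Lemma~\ref{Lema-LH_0} together with Lemma~\ref{norma-modular}. For $|Q|>1$ I would compare with $q_\infty$ through $LH_\infty$.
\item Next, write
\[
\frac1{|Q|}\int_Q g=\frac1{|Q|}\int_Q\mu^{q(y)}\Bigl(\frac{|f|}{\mu}\Bigr)^{q(y)}dy,
\]
so everything reduces to comparing $\mu^{q(y)}$ with $\mu^{q(x)}$ for $y\in Q$.
\item This comparison requires $\mu$ to be neither too large nor too small relative to $|Q|$. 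For the upper bound, Hölder's inequality (Lemma~\ref{Holder}) applied to $|f|^{q}\cdot\chi_Q$ with exponents $s$ and $s'$ gives $\mu^{q_\pm}\lesssim\|\chi_Q\|_{s'(\cdot)}$, a power of $|Q|$. Then $\mu^{q(y)-q(x)}\lesssim1$ by the $LH_0$ estimate $|Q|^{-|q(x)-q(y)|}\lesssim1$.
\end{enumerate}

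The main obstacle is the small-$\mu$ case together with large cubes, where $\mu^{q(y)-q(x)}$ can blow up. To handle it I would split $f=f\chi_{\{|f|>1\}}+f\chi_{\{|f|\le1\}}$. For the part with $|f|\le1$, and for cubes far out, I would invoke Lemma~\ref{Lema-LH_infty} with $F=|f|/\mu$ (or $F=|f|$) to replace the exponent $q(y)$ by $q_\infty$, and then by $q(x)$. This exchange costs exactly the additive error $h$. For the part with $|f|>1$, the bound $\|f\|_{p(\cdot)}\le1$ forces $\mu\lesssim1$ and $\rho(f\chi_{\{|f|>1\}})\le1$, so only the local $LH_0$ argument is needed there. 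Finally I would combine the two pieces, using $(a+b)^{q(x)}\le 2^{q_+}(a^{q(x)}+b^{q(x)})$, and take the supremum over cubes $Q\ni x$.
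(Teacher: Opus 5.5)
The paper gives no proof of this statement; it quotes it from \cite{DHHRlibro}, so there is no argument in the paper to compare yours with. Your outer reduction is sound: homogeneity, $g=|f|^{q(\cdot)}$ with $\rho_{s(\cdot)}(g)=\rho_{p(\cdot)}(f)$, and boundedness of $M$ on $L^{s(\cdot)}$. Everything therefore rests on the pointwise estimate, and that part does not work as planned.

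First, the estimate is false as you state it. With $N>n$, no bound $(M_{q(\cdot)}f)^{q(x)}\lesssim Mg+(e+|x|)^{-N}$ can hold. Take $q\equiv q_0>q_\infty$ on $B(0,1)$, $f=\epsilon\chi_{B(0,1)}$, $|x|=R$, and a cube $Q\ni 0,x$ of side comparable to $R$. Then $(M_{q(\cdot)}f(x))^{q(x)}\gtrsim\epsilon^{q_\infty}R^{-n}$ for $R\ge R_0(\epsilon)$, while $Mg(x)\lesssim\epsilon^{q_0}R^{-n}$. The $LH_\infty$ exchange on a large cube near the origin costs $|Q|^{-1}\int_Q(e+|y|)^{-N}dy\approx(e+|x|)^{-n}$, so the error term must be $h+Mh$. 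This is harmless, since $Mh\in L^{s(\cdot)}$.

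Similarly, your item (1) fails for $|Q|>1$. For $q(x)=2+e^{-|x|^2}$ and $Q=Q(0,R)$ one has $\|\chi_Q\|_{q(\cdot)}\approx|Q|^{1/2}$, whereas $|Q|^{1/q(0)}=|Q|^{1/3}$. The correct statement is $\|\chi_Q\|_{q(\cdot)}\approx|Q|^{1/q_\infty}$. So on large cubes, item (2) (comparing $\mu^{q(y)}$ with $\mu^{q(x)}$) is not the right reduction.

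The more serious gap is the small-$\mu$ case. Splitting off $f_1=f\chi_{\{|f|>1\}}$ does not remove it. The facts $\mu_1\lesssim 1$ and $\rho(f_1)\le 1$ are upper bounds, but the obstruction is $\mu_1$ being tiny relative to $|Q|$, and that happens for $f_1$ too. Take $|Q|\le 1$, $x\in Q$, a set $E\subset Q$ at distance $\gtrsim\ell(Q)$ from $x$ on which $q\ge q(x)+\theta$ for some $\theta>0$, and $f=2\chi_E$. Then:
\begin{itemize}
\item $(M_{q(\cdot)}f(x))^{q(x)}\gtrsim |E|^{q(x)/(q(x)+\theta)}/|Q|$;
\item $M(|f|^{q(\cdot)})(x)\lesssim |E|/|Q|$;
\item the ratio $|E|^{-\theta/(q(x)+\theta)}$ is unbounded as $|E|\to 0$.
\end{itemize}
So no $LH_0$-only argument handles the large values. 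The additive error is needed there as well, and for cubes far from the origin it suffices only because $LH_\infty$ forces $q_+(Q)-q_-(Q)\lesssim 1/\log(e+|x|)$.

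What is missing is to let the error term supply the lower bound, rather than relying on the size splitting. For $Q\ni x$, put $\sigma=|Q|^{-1}\int_Q|f|^{q(\cdot)}+h(x)+Mh(x)$ and show $\|f\chi_Q\|_{q(\cdot)}\lesssim\sigma^{1/q(x)}\|\chi_Q\|_{q(\cdot)}$ by a modular computation.
\begin{itemize}
\item On small cubes, your H\"older step gives $\sigma\lesssim|Q|^{-1}$, so $LH_0$ controls the factor $\sigma^{1-q(y)/q(x)}$ when $\sigma\ge 1$. When $\sigma<1$, that factor is controlled by $\sigma\ge(e+|x|)^{-N}$ together with $LH_\infty$.
\item Large cubes far from the origin behave like small cubes.
\item On large cubes near the origin, use $\|\chi_Q\|_{q(\cdot)}\approx|Q|^{1/q_\infty}$ and Lemma~\ref{Lema-LH_infty}, absorbing the set $\{|f|>1\}$ via $\rho_{q(\cdot)}(f\chi_{\{|f|>1\}})\le\rho_{p(\cdot)}(f)\le 1$.
\end{itemize}
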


We now prove the following result for our class of weights $\mathcal{A}_{A,p(\cdot )}$.

\begin{prop}\label{CLAIM 1}
Let $p(\cdot)\in \mathcal{P}(\mathbb{R}^n)$ with $p_+<\infty$, $A$ and $B$ be invertible matrices such that $p(Bx)=p(x)$. If $w\in \mathcal{A}_{A,p(\cdot)} \cap \mathcal{A}_{B,p(\cdot)}$ then $w\in \mathcal{A}_{AB,p(\cdot)}$.

Moreover, $[w]_{\mathcal{A}_{AB,p(\cdot )}}\leq [w]_{\mathcal{A}_{A,p(\cdot )}}[w]_{\mathcal{A}_{B,p(\cdot
)}}$.
\end{prop}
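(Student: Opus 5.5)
The plan is to reduce the statement to a pointwise inequality, as in the constant exponent case (Proposition \ref{Propiedades}). I would first show that if $w\in\mathcal{A}_{A,p(\cdot)}$, then
\[
w(Ax)\leq C\,[w]_{\mathcal{A}_{A,p(\cdot)}}\,w(x)\qquad\text{a.e. } x\in\mathbb{R}^n,
\]
where $C$ is the constant in Hölder's inequality (Lemma \ref{Holder}). Granting this, I apply it at the point $Bx$. Since $B$ is invertible, it maps null sets to null sets, so the inequality still holds a.e. in $x$ and gives
\[
w_{AB}(x)=w(A(Bx))\leq C[w]_{\mathcal{A}_{A,p(\cdot)}}\,w(Bx)=C[w]_{\mathcal{A}_{A,p(\cdot)}}\,w_B(x).
\]
By monotonicity of the $L^{p(\cdot)}$ norm, for every cube $Q$,
\[
|Q|^{-1}\|w_{AB}\chi_Q\|_{p(\cdot)}\|w^{-1}\chi_Q\|_{p'(\cdot)}\leq C[w]_{\mathcal{A}_{A,p(\cdot)}}\,|Q|^{-1}\|w_{B}\chi_Q\|_{p(\cdot)}\|w^{-1}\chi_Q\|_{p'(\cdot)}\leq C[w]_{\mathcal{A}_{A,p(\cdot)}}[w]_{\mathcal{A}_{B,p(\cdot)}}.
\]
Taking the supremum over $Q$ gives the result. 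The hypothesis $p(Bx)=p(x)$ is what one would use if one instead composed with $B$ via Proposition \ref{UV}. In this pointwise route it is only needed if one wants to transport the estimate $w_A\lesssim w$ through $B$ at the level of norms rather than pointwise.

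The main step is the pointwise lemma. Fix positive rationals $t,s$ and let
\[
F=\{x: w(Ax)>t,\ w(x)<s\}.
\]
For a cube $Q$, set $E=F\cap Q$. Monotonicity of the norms and the $\mathcal{A}_{A,p(\cdot)}$ condition give
\[
\frac{t}{s}\|\chi_E\|_{p(\cdot)}\|\chi_E\|_{p'(\cdot)}\leq \|w_A\chi_E\|_{p(\cdot)}\|w^{-1}\chi_E\|_{p'(\cdot)}\leq [w]_{\mathcal{A}_{A,p(\cdot)}}|Q|.
\]
Lemma \ref{Holder} applied to $\chi_E\cdot\chi_E$ gives $|E|\leq C\|\chi_E\|_{p(\cdot)}\|\chi_E\|_{p'(\cdot)}$. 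Combining the two bounds,
\[
\frac{t}{s}\leq C[w]_{\mathcal{A}_{A,p(\cdot)}}\frac{|Q|}{|F\cap Q|}.
\]
If $|F|>0$, I choose $Q$ shrinking to a Lebesgue density point of $F$, so that $|F\cap Q|/|Q|\to1$. This forces $t/s\leq C[w]_{\mathcal{A}_{A,p(\cdot)}}$. Now suppose $\{w_A>K w\}$ had positive measure for some $K>C[w]_{\mathcal{A}_{A,p(\cdot)}}$. This set is a countable union of sets of the form $F$ with $t/s>K$, so one of those $F$ would have positive measure, which is a contradiction.

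The obstacle I expect is the constant. Hölder's inequality in variable spaces carries a constant $C>1$, so this argument gives $[w]_{\mathcal{A}_{AB,p(\cdot)}}\leq C[w]_{\mathcal{A}_{A,p(\cdot)}}[w]_{\mathcal{A}_{B,p(\cdot)}}$ rather than the constant $1$ stated in the Proposition. To remove $C$, I would try to show directly that $|E|\leq\|\chi_E\|_{p(\cdot)}\|\chi_E\|_{p'(\cdot)}$ for characteristic functions. If that fails, I would accept the harmless multiplicative constant.
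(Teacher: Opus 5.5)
Your argument is correct (up to the constant you flag, addressed below), and it takes a genuinely different route from the paper's. The paper works only with norms:
\begin{itemize}
\item it multiplies by the H\"older lower bound $1\leq K_{p(\cdot)}|Q|^{-1}\|w_B^{-1}\chi_Q\|_{p'(\cdot)}\|w_B\chi_Q\|_{p(\cdot)}$;
\item it writes $w_{AB}\chi_Q=(w_A\chi_{B^{-1}Q})\circ B$ and $w_B^{-1}\chi_Q=(w^{-1}\chi_{B^{-1}Q})\circ B$;
\item it removes the composition with Proposition \ref{UV}, which is the only place $p(Bx)=p(x)$ is used;
\item it enlarges $B^{-1}Q$ to a cube $R$ with $|R|\approx|Q|$, so one factor becomes an $\mathcal{A}_{A,p(\cdot)}$ quantity on $R$ and the other is the $\mathcal{A}_{B,p(\cdot)}$ quantity on $Q$.
\end{itemize}
You instead extract the pointwise bound $w_A\leq K_{p(\cdot)}[w]_{\mathcal{A}_{A,p(\cdot)}}\,w$ a.e.\ by a density-point argument, and then compose with $B$ pointwise. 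Your route needs neither $p(Bx)=p(x)$ nor $p_+<\infty$, and involves no change of variables or cube enlargement. Its constant is only H\"older's, whereas the paper's depends on $|\det B|$ and the shape of $B^{-1}Q$. Your lemma alone also gives $w_A\lesssim w$ with linear dependence on $[w]_{\mathcal{A}_{A,p(\cdot)}}$ and no regularity assumption on $p(\cdot)$. That is much simpler and stronger than Theorem \ref{w_A-w}.

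On the constant, note first that every step of the paper's chain is a $\lesssim$. It accumulates $K_{p(\cdot)}$, the factors from Proposition \ref{UV}, and $|R|/|Q|$. So the paper itself only proves $[w]_{\mathcal{A}_{AB,p(\cdot)}}\lesssim[w]_{\mathcal{A}_{A,p(\cdot)}}[w]_{\mathcal{A}_{B,p(\cdot)}}$, and your version is not weaker. Your proposed fix, $|E|\leq\|\chi_E\|_{p(\cdot)}\|\chi_E\|_{p'(\cdot)}$ for all $E$, is false. Take $|E_1|=|E_2|=1$, with $p=q$ on $E_1$ and $p=q'$ on $E_2$. Both norms of $\chi_{E_1\cup E_2}$ equal the root $\lambda_q>1$ of $\lambda^{-q}+\lambda^{-q'}=1$. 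Since $\lambda_q\to1$ as $q\to\infty$, the product tends to $1<2=|E_1\cup E_2|$.

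What does work is to localize H\"older. The Young-inequality proof gives $|E|\leq\bigl(1+\frac{1}{p_-(E)}-\frac{1}{p_+(E)}\bigr)\|\chi_E\|_{p(\cdot)}\|\chi_E\|_{p'(\cdot)}$. Split your set $F$ into the finitely many pieces $F\cap\{k\varepsilon<1/p\leq(k+1)\varepsilon\}$. Run your density argument on a piece of positive measure; this gives $t/s\leq(1+\varepsilon)[w]_{\mathcal{A}_{A,p(\cdot)}}$. Letting $\varepsilon\to0$ yields $w_A\leq[w]_{\mathcal{A}_{A,p(\cdot)}}w$ a.e. Your composition step then gives the stated inequality with constant exactly $1$.
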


\begin{proof}
Let $Q$ be a cube. Since $w_{AB}(x)=w(ABx)=w_{A}(Bx)$, by Hölder
inequality,
\begin{align*}
&|Q|^{-1} \|w_{AB}\chi_Q\|_{p(\cdot)} 
\|w^{-1}\chi_Q\|_{p'(\cdot)}
\\&\leq|Q|^{-1} \|w_{AB}\chi_Q\|_{p(\cdot)} 
\frac{K_{p(\cdot)} \|w^{-1}_B \chi_Q\|_{p'(\cdot)}\|w_B\chi_Q\|_{p(\cdot)}}{|Q|}
\|w^{-1}\chi_Q\|_{p'(\cdot)}.
\end{align*}
Observe that
\begin{align*}
\|w_{AB}\chi_Q\|_{p(\cdot)} 
=\|(w \circ AB )  \chi_Q\|_{p(\cdot)}
=\|(w_A   \chi_{B^{-1}Q} ) \circ B \|_{p(\cdot)}
\end{align*}
and
\begin{align*}
\|w_{B}^{-1}\chi_Q\|_{p'(\cdot)} 
=\|(w^{-1} \circ B )  \chi_Q\|_{p'(\cdot)}
=\|(w^{-1}   \chi_{B^{-1}Q} ) \circ B \|_{p'(\cdot)}.
\end{align*}

We take a cube $R$ such that $B^{-1}Q\subset R$ and $|Q|\approx |R|$.
So, if  $p(Bx)=p(x)$ by the Proposition \ref{UV} and the definition of the class of weights $\mathcal{A}_{A,p(\cdot)}$ and $\mathcal{A}_{B,p(\cdot)}$ then we obtain
\begin{eqnarray*}
&& |Q|^{-1}\left\Vert w_{AB}\chi _{Q}\right\Vert _{p(\cdot )}\left\Vert
w^{-1}\chi _{Q}\right\Vert _{p^{\prime }(\cdot )} \\
&\lesssim &|Q|^{-1}\left\Vert (w_{A}\chi _{B^{-1}Q})\circ B\right\Vert
_{p(\cdot )}\frac{\left\Vert (w^{-1}\chi _{B^{-1}Q})\circ B\right\Vert
_{p^{\prime }(\cdot )}\left\Vert w_{B}\chi _{Q}\right\Vert _{p(\cdot )}}{|Q|}%
\left\Vert w^{-1}\chi _{Q}\right\Vert _{p^{\prime }(\cdot )} \\
&\lesssim &|Q|^{-1}\left\Vert w_{A}\chi _{B^{-1}Q}\right\Vert _{p(\cdot )}%
\frac{\left\Vert w^{-1}\chi _{B^{-1}Q}\right\Vert _{p^{\prime }(\cdot
)}\left\Vert w_{B}\chi _{Q}\right\Vert _{p(\cdot )}}{|Q|}\left\Vert
w^{-1}\chi _{Q}\right\Vert _{p^{\prime }(\cdot )} \\
&\lesssim &\left[ |Q|^{-1}\left\Vert w_{A}\chi _{B^{-1}Q}\right\Vert
_{p(\cdot )}\left\Vert w^{-1}\chi _{B^{-1}Q}\right\Vert _{p^{\prime }(\cdot
)}\right] ~\left[ |Q|^{-1}\left\Vert w_{B}\chi _{Q}\right\Vert _{p(\cdot
)}\left\Vert w^{-1}\chi _{Q}\right\Vert _{p^{\prime }(\cdot )}\right]  \\
&\lesssim &\left[ |R|^{-1}\left\Vert w_{A}\chi _{R}\right\Vert _{p(\cdot
)}\left\Vert w^{-1}\chi _{R}\right\Vert _{p^{\prime }(\cdot )}\right] \left[
|Q|^{-1}\left\Vert w_{B}\chi _{Q}\right\Vert _{p(\cdot )}\left\Vert
w^{-1}\chi _{Q}\right\Vert _{p^{\prime }(\cdot )}\right]  \\
&\lesssim &[w]_{\mathcal{A}_{A,p(\cdot )}}[w]_{\mathcal{A}_{B,p(\cdot
)}}<\infty .
\end{eqnarray*}
This imply that $[w]_{\mathcal{A}_{AB,p(\cdot )}}\leq [w]_{\mathcal{A}_{A,p(\cdot )}}[w]_{\mathcal{A}_{B,p(\cdot
)}}$ and $w\in \mathcal{A}_{AB,p(\cdot)}$.
\end{proof}

A consequence of the last proposition, we obtain the following result.

\begin{corol}\label{pesos-A-nilp} 
Let $p(\cdot)\in \mathcal{P}(\mathbb{R}^n)$ such that $p_+<\infty$ and $p(Ax)=p(x)$. If $A$ such that $A^k=I$ for some $k\in \mathbb{N}$ then 
$\mathcal{A}_{A,p(\cdot)}\subset \mathcal{A}_{p(\cdot)}$. 
Moreover, $[w]_{\mathcal{A}_{p(\cdot)}}\leq [w]_{\mathcal{A}_{A,p(\cdot)}}^k$
\end{corol}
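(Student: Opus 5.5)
The plan is to iterate Proposition \ref{CLAIM 1}. Since $p(Ax)=p(x)$ a.e., we also get $p(A^jx)=p(x)$ a.e. for every $j\ge 1$. This follows by induction, because $p(A^{j}x)=p(A(A^{j-1}x))=p(A^{j-1}x)$ and $A^{j-1}$ is invertible, so null sets are preserved under the change of variables. Now let $w\in\mathcal{A}_{A,p(\cdot)}$. Applying Proposition \ref{CLAIM 1} with the pair $(A,B)=(A,A)$ gives $w\in\mathcal{A}_{A^2,p(\cdot)}$ and
\[[w]_{\mathcal{A}_{A^2,p(\cdot)}}\le [w]_{\mathcal{A}_{A,p(\cdot)}}^2.\]

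Next I would argue by induction on $j$. Assume $w\in\mathcal{A}_{A^{j},p(\cdot)}$ with $[w]_{\mathcal{A}_{A^j,p(\cdot)}}\le [w]_{\mathcal{A}_{A,p(\cdot)}}^j$. Apply Proposition \ref{CLAIM 1} with $B=A$ and with the matrix $A^{j}$ in the role of $A$. The hypothesis required there is only $p(Bx)=p(x)$, which holds for $B=A$. This yields $w\in \mathcal{A}_{A^{j+1},p(\cdot)}$ together with
\[[w]_{\mathcal{A}_{A^{j+1},p(\cdot)}}\le [w]_{\mathcal{A}_{A^j,p(\cdot)}}[w]_{\mathcal{A}_{A,p(\cdot)}}\le [w]^{j+1}_{\mathcal{A}_{A,p(\cdot)}}.\]
Taking $j+1=k$ gives $A^k=I$ and $w_{A^k}=w$. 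By definition $\mathcal{A}_{I,p(\cdot)}=\mathcal{A}_{p(\cdot)}$, so $w\in\mathcal{A}_{p(\cdot)}$ with $[w]_{\mathcal{A}_{p(\cdot)}}\le[w]_{\mathcal{A}_{A,p(\cdot)}}^k$.

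The main obstacle is bookkeeping rather than ideas. Proposition \ref{CLAIM 1} is stated with "$\lesssim$" steps, coming from the constant in Proposition \ref{UV} and from replacing $B^{-1}Q$ by a cube $R$. So the constants are honestly $C^{k-1}[w]^k$, and the clean inequality $[w]^k$ should be read up to constants depending on $A$, $p(\cdot)$ and $n$, consistent with the paper's convention. I would also note that the argument never uses log-Hölder continuity; it only uses $p_+<\infty$ and the $A$-invariance of $p$, exactly as in the hypotheses.
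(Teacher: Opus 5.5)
Your proposal is correct and follows the paper's own route: the paper presents the corollary as a direct consequence of Proposition \ref{CLAIM 1}, and iterating it $k-1$ times with $B=A$ (so only $p(Ax)=p(x)$ is needed) until $A^k=I$ gives $\mathcal{A}_{A^k,p(\cdot)}=\mathcal{A}_{p(\cdot)}$. Your caveat about constants is also accurate: the proof of Proposition \ref{CLAIM 1} only yields $[w]_{\mathcal{A}_{AB,p(\cdot)}}\le C\,[w]_{\mathcal{A}_{A,p(\cdot)}}[w]_{\mathcal{A}_{B,p(\cdot)}}$, with $C$ depending on the H\"older constant $K_{p(\cdot)}$, the constant of Proposition \ref{UV} and the ratio $|R|/|Q|$, so what is honestly proved is $[w]_{\mathcal{A}_{p(\cdot)}}\le C^{k-1}[w]_{\mathcal{A}_{A,p(\cdot)}}^k$.
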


The following lemma guaranties conditions for that $w_A\in \mathcal{A}_{p(\cdot)}$.

\begin{lemma}\label{w_A in Ap}
   Let $A$ be an invertible matrix with $|\det(A)|=1$, and let $p(\cdot) \in \mathcal{P}(\mathbb{R}^n)$ be such that $p_+<\infty$ and $p(Ax)=p(x)$ a.e. $x\in \mathbb{R}^n$. If $w \in \mathcal{A}_{p(\cdot)}$, then $w_A \in \mathcal{A}_{p(\cdot)}$ and $[w_A]_{\mathcal{A}_{p(\cdot)}}\leq [w]_{\mathcal{A}_{p(\cdot)}}\|A\|^n$.
\end{lemma}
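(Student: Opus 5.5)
The plan is to work directly from the definition of $\mathcal{A}_{p(\cdot)}$. We fix an arbitrary cube $Q$ and bound
\[
|Q|^{-1}\|w_A\chi_Q\|_{p(\cdot)}\|w_A^{-1}\chi_Q\|_{p'(\cdot)}
\]
by transporting both norms through the change of variables $x\mapsto Ax$. The image $AQ$ is a parallelepiped, not a cube, so we enclose it in a cube $R$ with sides parallel to the axes. Then we use the $\mathcal{A}_{p(\cdot)}$ condition of $w$ on $R$, together with monotonicity of the norm, $AQ\subset R$.

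First we write $w_A\chi_Q=(w\chi_{AQ})\circ A$ and $w_A^{-1}\chi_Q=(w^{-1}\chi_{AQ})\circ A$. Since $p(Ax)=p(x)$ a.e., we also have $p'(Ax)=p'(x)$ a.e. We then apply Proposition \ref{UV} with $|\det A|=1$, in both $L^{p(\cdot)}$ and $L^{p'(\cdot)}$. This gives
\[
\|w_A\chi_Q\|_{p(\cdot)}\le \|w\chi_{AQ}\|_{p(\cdot)},\qquad \|w_A^{-1}\chi_Q\|_{p'(\cdot)}\le \|w^{-1}\chi_{AQ}\|_{p'(\cdot)}.
\]
For the $p'(\cdot)$ case, the constant $\max\{1,|\det A|^{1/p'_-}\}$ equals $1$. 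If $p_+=\infty$ issues arise for $p'$, we instead argue directly via the modular, since the substitution $y=Ax$ preserves Lebesgue measure and $p'(Ax)=p'(x)$. This yields equality of the modulars, and hence equality of the norms.

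Next, let $Q$ have center $x_Q$ and side $\ell$. Then $AQ\subset B(Ax_Q,\|A\|\sqrt n\,\ell/2)$, which is contained in a cube $R$ centered at $Ax_Q$ with side $\|A\|\sqrt n\,\ell$. Hence $|R|\le n^{n/2}\|A\|^n|Q|$. By monotonicity of the norms, $\|w\chi_{AQ}\|_{p(\cdot)}\le\|w\chi_R\|_{p(\cdot)}$, and similarly for $w^{-1}$. Therefore
\[
|Q|^{-1}\|w_A\chi_Q\|_{p(\cdot)}\|w_A^{-1}\chi_Q\|_{p'(\cdot)}\le \frac{|R|}{|Q|}\,|R|^{-1}\|w\chi_R\|_{p(\cdot)}\|w^{-1}\chi_R\|_{p'(\cdot)}\le C_n\|A\|^n[w]_{\mathcal{A}_{p(\cdot)}}.
\]
Taking the supremum over $Q$ gives the claim.

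The only delicate point is the geometric constant. A crude enclosing cube gives $C_n\|A\|^n$, and obtaining exactly $\|A\|^n$ as stated requires either using the norm $\|A\|$ induced by the $\ell^\infty$ norm on $\mathbb{R}^n$, or absorbing the dimensional factor. With the $\ell^\infty$ operator norm, $AQ$ lies in the cube centered at $Ax_Q$ with half-side $\|A\|_{\infty}\ell/2$, so $|R|=\|A\|^n|Q|$ exactly. I will adopt that convention for $\|A\|$. Otherwise the statement should be read with an implicit dimensional constant, consistent with the paper's $\lesssim$ conventions.
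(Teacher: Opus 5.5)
Your argument is correct and follows the paper's proof. Both transport the two norms through the measure-preserving change of variables $y=Ax$, using $p(Ax)=p(x)$ and hence $p'(Ax)=p'(x)$, then enclose the image set in a larger one and apply monotonicity of the norm together with the $\mathcal{A}_{p(\cdot)}$ condition on that larger set.

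The one difference is the enclosing set. The paper uses balls, $AB\subset B(Ax_0,\|A\|r)$ with the Euclidean operator norm, which tacitly applies a ball version of the cube-defined $\mathcal{A}_{p(\cdot)}$ class and so hides a dimensional constant. Your cube enclosure with the $\ell^\infty$-induced norm gives exactly $\|A\|^n$ for the definition as stated.
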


\begin{proof}
Since $|\det (A)|=1$, $w\in \mathcal{A}_{p(\cdot )}$ and $p(Ax)=p(x)$ a.e. $x\in \mathbb{R}^n$, we analyze the quantity $\left\Vert w_{A}\chi _{B}\right\Vert _{p(\cdot )}$. Let $B=B(x_{0},r)$ and define $AB=\{Ay:y\in B\}$. Then
\begin{align*}
    \rho _{p(\cdot )}\left( \frac{w_{A}\chi _{B}}{\lambda }\right)
&=\int_{B}\left( \frac{w_{A}(x)}{\lambda }\right) ^{p(x)}dx
=\int_{B}\left( \frac{w(Ax)}{\lambda }\right) ^{p(x)}dx\\
&=\int_{AB}\left( \frac{w(y)}{\lambda }\right)^{p(A^{-1}y)}dy
=\int_{AB}\left( \frac{w(y)}{\lambda }\right)^{p(y)}dy.
\end{align*}

Since $AB\subset B(Ax_{0},\left\| A\right\| r)=B_{A}$, where $\|A\|$ is the operator norm, it follows that
\[
\rho _{p(\cdot )}\left( \frac{w_{A}\chi _{B}}{\lambda }\right) \leq
\int_{B_{A}}\left( \frac{w(y)}{\lambda }\right) ^{p(y)}dy=\rho_{p(\cdot
)}\left( \frac{w\chi _{B_{A}}}{\lambda }\right).
\]

So, taking $\lambda=\|w\chi_{B_A}\|_{p(\cdot)}$ we have $\rho_{p(\cdot
)}\left( \frac{w\chi _{B_{A}}}{\lambda }\right)\leq 1$
then $\rho _{p(\cdot )}\left( \frac{w_{A}\chi _{B}}{\lambda }\right) \leq 1$. By definition of the norm in $L^{p(\cdot )}(\mathbb{R}^n)$, $\left\Vert w_{A}\chi _{B}\right\Vert _{p(\cdot )}\leq\left\Vert w\chi _{B_{A}}\right\Vert _{p(\cdot )}$. 

Analogously, since $p^{\prime }(Ax)=p^{\prime }(x)$ a.e. $x\in \mathbb{R}^n$, we also obtain        $\left\Vert w_{A}^{-1}\chi _{B}\right\Vert _{p^{\prime }(\cdot )}\leq
\left\Vert w^{-1}\chi _{B_{A}}\right\Vert _{p^{\prime }(\cdot )}$. And by
definition of the $\mathcal{A}_{p(\cdot )}$ class, 
\[
\left\Vert w\chi _{B_{A}}\right\Vert _{p(\cdot )}\left\Vert w^{-1}\chi
_{B_{A}}\right\Vert _{p^{\prime }(\cdot )}\leq [w]_{\mathcal{A}_{p(\cdot)}}\left\vert B_{A}\right\vert.
\]

Since $\left\vert
B_{A}\right\vert =\left\| A\right\| \left\vert B\right\vert $, we
conclude that 
\[
\left\Vert w_{A}\chi _{B}\right\Vert _{p(\cdot )}\left\Vert w_{A}^{-1}\chi
_{B}\right\Vert _{p^{\prime }(\cdot )}\leq [w]_{\mathcal{A}_{p(\cdot)}}\|A\|^n\left\vert B\right\vert.
\]

Therefore $w_{A}\in \mathcal{A}_{p(\cdot )}$ and $[w_A]_{\mathcal{A}_{p(\cdot)}}\leq [w]_{\mathcal{A}_{p(\cdot)}}\|A\|^n$. 
\end{proof}

\subsection{Proof of Theorem \ref{Principal}}

The necessary condition of Theorem \ref{Principal} was proved in the last section.
We focus on the sufficient condition.

\begin{proof}[Proof of Theorem \ref{Principal}]

If $M_{A^{-1}}$ is bounded on $L^{p(\cdot)}_w$ then, by Theorem \ref{Necesidad-w}, $w\in \mathcal{A}_{A,p(\cdot)}$.
Conversely, let
$p(\cdot)\in \mathcal{P}^{\log}(\mathbb{R}^n)$ such that $1<p_-\leq p_+<\infty$ and $p(Ax)=p(x)$ a.e. $x\in \mathbb{R}^n$, $w\in \mathcal{A}_{A,p(\cdot)}$ and $f\in L^{p(\cdot)}(w)$.

Let $b\geq 2^{n+1}$. We consider $\Omega_{A,k}=\{x \in \mathbb{R}^n : M_{A^{-1}}f(x)>b^k\}$
and
$D_{A,k}=\{x \in \mathbb{R}^n : b^k<M_{A^{-1}}f(x)\leq b^{k+1}\}$. 
Observe that $\Omega_{A,k}=A(\{x \in \mathbb{R}^n : M_{A^{-1}}f(x)>b^k\})=A (\Omega_k)$ and $D_{A,k}=A(D_k)$.

By the Calderón-Zygmund decomposition, we have $Q_{j,k}$ such that 
$$b^k <\frac1{|Q_{j,k}|}\int_{Q_{j,k}}|f(x)|\, dx \leq b^{k+1}$$
and 
$$\Omega_A\subset \bigcup_{j} AQ^3_{j,k}$$
where $Q^3_{j,k}$ is the cube with the same center of $Q_{j,k}$ and triple side length.
Let $F_{i,k}=Q_{i,k} \setminus \Omega_{k+1}$, note that the family $\{F_{i,k}\}_{i,k}$ is pairwise disjoint and $|Q_{i,k}|\leq 2|F_{i,k}|$.

Now, if $g\in L^{p'(\cdot)}(w^{-1})$ non-negative function, we have  

\begin{align*}
    \int_{\mathbb{R}^n} M_{A^{-1}}f(x) g(x) \,dx
    &\leq \sum_{k} \int_{D_{A,k}}b^{k+1}g(x) \, dx
    \leq b^{k+1}\sum_{j,k} \int_{AQ_{j,k}^3}g(x) \, dx
\\&\leq b |\det(A)|\sum_{j,k} \left(\frac1{|Q_{j,k}|}\int_{Q_{j,k}} |f|\right) \int_{Q_{j,k}^3}g_A(x) \, dx
\\&\lesssim \sum_{j,k} \left(\frac1{|Q_{j,k}^3|}\int_{Q_{j,k}^3} |f|\right) \left(\frac1{|Q_{j,k}^3|}\int_{Q_{j,k}^3}g_A\right) |F_{j,k}|
\end{align*}

By Corollary \ref{pesos-A-nilp} we have $w\in \mathcal{A}_{p(\cdot)}$. Since the class $\mathcal{A}_{p(\cdot)}$ is left open, Proposition \ref{Ap-OpenLeft}, there exists $s$ such that $\min\left\{\frac1{p_-},\frac1{p'_-}\right\}<s<1$,  $w^{\frac1{s}}\in \mathcal{A}_{sp(\cdot)}$ and $w^{-\frac1{s}}\in \mathcal{A}_{sp'(\cdot)}$. 

We define the exponents $u'(\cdot)=\frac1{s} (sp(\cdot))'$ and $v(\cdot)=\frac1{s}(sp'(\cdot))'$. Since $p(\cdot) \in \mathcal{P}^{\log}(\mathbb{R}^n)$ we have $u(\cdot),v(\cdot) \in \mathcal{P}^{\log}(\mathbb{R}^n).$

Observe that 
\begin{align*}
    \frac{p'(\cdot)}{v'(\cdot)}&=p'(\cdot)(1-s)+1\geq (p')_{-}(1-s)+1>1,\\
    \frac{p(\cdot)}{u(\cdot)}&=p(\cdot)(1-s)+1\geq p_{-}(1-s)+1>1.
\end{align*}

Then, by Theorem \ref{Mq bounded} w have $M_{u(\cdot)}$ is bounded on $L^{p(\cdot)}(\mathbb{R}^n)$ and $M_{v'(\cdot)}$ is bounded on $L^{p'(\cdot)}(\mathbb{R}^n)$.

Then by Hölder 
\begin{align*}
&\int_{\mathbb{R}^n} M_{A^{-1}}f(x) g(x) \, dx  
\lesssim  \sum_{j,k}  \left(\frac1{|Q_{j,k}^3|}\int_{Q_{j,k}^3}|f|\right) \left(\frac1{|Q_{j,k}^3|}\int_{Q_{j,k}^3}  g_A \right) |F_{j,k}|
\\& \lesssim  \sum_{j,k}  |F_{j,k}|  \frac1{|Q_{j,k}^3|}
\|fw \chi_{Q_{j,k}^3}\|_{u(\cdot)} \|w^{-1}\chi_{Q_{j,k}^3}\|_{u'(\cdot)} 
 \frac1{|Q_{j,k}^3|} \|g_Aw_A^{-1}\chi_{Q_{j,k}^3}\|_{v'(\cdot)} \|w_A\chi_{Q_{j,k}^3}\|_{v(\cdot)}
 \\& \lesssim  \sum_{j,k}  |F_{j,k}|  
\frac{\|fw \chi_{Q_{j,k}^3}\|_{u(\cdot)}}{\|\chi_{Q_{j,k}^3}\|_{u(\cdot)}} \frac{\|w^{-1}\chi_{Q_{j,k}^3}\|_{u'(\cdot)}}{\|\chi_{Q_{j,k}^3}\|_{u'(\cdot)}} 
 \frac{\|g_Aw_A^{-1}\chi_{Q_{j,k}^3}\|_{v'(\cdot)}}{\|\chi_{Q_{j,k}^3}\|_{v'(\cdot)}} \frac{\|w_A\chi_{Q_{j,k}^3}\|_{v(\cdot)}}{{\|\chi_{Q_{j,k}^3}\|_{v(\cdot)}}}
\end{align*}

We now claim that
\begin{equation}\label{claim}
    \frac{\|w^{-1}\chi_{Q_{j,k}^3}\|_{u'(\cdot)}}{\|\chi_{Q_{j,k}^3}\|_{u'(\cdot)}} 
 \frac{\|w_A\chi_{Q_{j,k}^3}\|_{v(\cdot)}}{{\|\chi_{Q_{j,k}^3}\|_{v(\cdot)}}}\lesssim  C_w
\end{equation}

Therefore 
\begin{align*}
    \int_{\mathbb{R}^n} M_{A^{-1}}f(x) g(x) \, dx 
    &\lesssim C_w \sum_{j,k}  |F_{j,k}|  \frac{\|fw \chi_{Q_{j,k}^3}\|_{u(\cdot)}}{\|\chi_{Q_{j,k}^3}\|_{u(\cdot)}}  
    \frac{\|g_Aw_A^{-1}\chi_{Q_{j,k}^3}\|_{v'(\cdot)}}{\|\chi_{Q_{j,k}^3}\|_{v'(\cdot)}} 
\\&\lesssim C_w \sum_{j,k}  \int_{F_{j,k}} M_{u(\cdot)}(fw) M_{v'(\cdot)}(g_Aw_A^{-1}) \,dx
\\&\lesssim C_w \int_{\mathbb{R}^n} M_{u(\cdot)}(fw) M_{v'(\cdot)}(g_Aw_A^{-1}) \, dx
\\&\lesssim C_w \|M_{u(\cdot)}(fw)\|_{p(\cdot)} \|M_{v'(\cdot)}(g_Aw_A^{-1})\|_{p'(\cdot)}
\end{align*}
Note that since $p(Ax)=p(x)$ a.e. $x$, by Proposition \ref{UV} we have that 
$$\|g_Aw_A^{-1}\|_{p'(\cdot)}\lesssim \|gw^{-1}\|_{p'(\cdot)}$$
this imply $g_Aw_A^{-1}\in L^{p'(\cdot)}(\mathbb{R}^n)$.

Now, since $M_{u(\cdot)}$ is bounded on $L^{p(\cdot)}(\mathbb{R}^n)$ and $M_{v'(\cdot)}$ is bounded on $L^{p'(\cdot)}(\mathbb{R}^n)$ we obtain 
\begin{align*}
\int_{\mathbb{R}^n} M_{A^{-1}}f(x) g(x) \, dx 
    &\lesssim C_w  \|fw\|_{p(\cdot)}\|gw^{-1}\|_{p'(\cdot)}
\end{align*}

Then, by duality argument we have the desired estimate.

Finally to complete the proof, we must prove that the claim \eqref{claim} holds. Observe that \eqref{claim} is 
\begin{equation*}
   (w^{-1})_{u^{\prime }(\cdot ),Q}(w_{A})_{v(\cdot
),Q} \lesssim C_w.
\end{equation*}
with $Q={Q_{j,k}^3}$.

By Lemma \ref{w_A in Ap} we have $w_{A}\in \mathcal{A}_{p(\cdot )}$. Then $w_{A}^{-1}\in \mathcal{A}_{p^{\prime }(\cdot )}$. And, so by Proposition \ref{Ap-OpenLeft}, $w_{A}^{1/s}\in \mathcal{A}_{sp(\cdot )}$ and $w_{A}^{-1/s}\in \mathcal{A}_{sp^{\prime }(\cdot )}$. Hence we can apply the Lemma \ref{LemaCUDH} and we get
\begin{equation*}
(w^{1/s})_{sp(\cdot ),Q}(w^{-1/s})_{[sp(\cdot )]^{\prime
},Q} \lesssim [w^{1/s}]_{\mathcal{A}_{sp(\cdot)}} 
\end{equation*}

and
\begin{equation*}
(w_{A}^{-1/s})_{sp^{\prime }(\cdot ),Q}(w_{A}^{1/s})_{[sp^{\prime
}(\cdot )]^{\prime },Q} \lesssim [w_A^{-1/s}]_{\mathcal{A}_{sp'(\cdot)}}.
\end{equation*}

So, by Lemma \ref{dilationExpo}, we obtain
\begin{equation*}
(w)_{p(\cdot ),Q}(w^{-1})_{u^{\prime }(\cdot ),Q} \lesssim [w^{1/s}]_{\mathcal{A}_{sp(\cdot)}}^s
\end{equation*}

and
\begin{equation*}
(w_{A}^{-1})_{p^{\prime }(\cdot ),Q}(w_{A})_{v(\cdot
),Q} \lesssim [w_A^{-1/s}]_{\mathcal{A}_{sp'(\cdot)}}^s.
\end{equation*}

Then using Lemma \ref{LemaCUDH} for $w,w_A\in \mathcal{A}_{p(\cdot)}$ and definition of the class $\mathcal{A}_{A,p(\cdot)}$, we finally obtain
\begin{align*}
(w^{-1})_{u^{\prime }(\cdot ),Q}(w_{A})_{v(\cdot ),Q}
&\lesssim (w^{-1})_{u^{\prime }(\cdot ),Q} (w)_{p(\cdot),Q}(w^{-1})_{p'(\cdot),Q} (w_A)_{p(\cdot),Q}(w_A^{-1})_{p(\cdot),Q} (w_{A})_{v(\cdot ),Q}
\\&\leq [w^{1/s}]_{\mathcal{A}_{sp(\cdot)}}^s [w]_{\mathcal{A}_{A,p(\cdot)}}[w_A^{-1/s}]_{\mathcal{A}_{sp'(\cdot)}}^s=:C_w.
\end{align*}


\end{proof}

\section{Properties of the class {$\mathcal{A}_{A,p(\cdot)}$}}

In this section, we prove several properties of the weight class $\mathcal{A}_{A,p(\cdot)}$.
First, recall that in the case without matrix, it is well known that $[w]_{\mathcal{A}_{p}} \geq 1$ and $[w]_{\mathcal{A}_{p(\cdot)}} K_{p(\cdot)}\geq 1$. 
We now study a lower bound of $[w]_{\mathcal{A}_{A,p(\cdot)}}$.

Let $w\in \mathcal{A}_{A,p(\cdot)}$. For a cube $Q$ centered at origin, we have $AQ\cap Q \not = \emptyset$ and 
\begin{align*}
|AQ\cap Q| &\leq K_{p(\cdot)} \|w\chi_{AQ}\|_{p(\cdot)}\|w^{-1} \chi_Q\|_{p'(\cdot)}
\\&\leq K_{p(\cdot)} \max\{1,|\det(A^{-1})|\} \|w_A\chi_{Q}\|_{p(\cdot)}\|w^{-1} \chi_Q\|_{p'(\cdot)}
\\&\leq K_{p(\cdot)} \max\{1,|\det(A^{-1})|\} [w]_{\mathcal{A}_{A,p}}|Q|.
\end{align*}

Then, we obtain 
\[
[w]_{\mathcal{A}_{A,p(\cdot)}} K_{p(\cdot)} \max\{1,|\det(A^{-1})|\} \geq \sup_{Q=Q(0,\ell)} \frac{|AQ\cap Q|}{|Q|}.
\]
where the supremum is taken over all cubes  $Q(0,\ell)$ centered on the origin and side length $\ell$. 

Observe that $B(0,\ell)\subset Q(0,2\ell)$. We define $\tilde{B}=B\left(0, \frac{\ell}{\max\{\|A^{-1}\|,1\}}\right)$.Then $\tilde{B}\subset AQ\cap Q$ and
\[
\frac{|AQ\cap Q|}{|Q|} \geq \frac{|\tilde{B}|}{|Q|}  
= \frac1{2^n\max\left\{\|A^{-1}\|^n,1\right\}}. 
\]
Observe that this lower bound depends only of the matrix $A$ and can be lower than $1$.

We define 
\[ [[w]]_{\mathcal{A}_{A,p(\cdot)}} := 2^n\max\left\{\|A^{-1}\|^{2n},1\right\}  [w]_{\mathcal{A}_{A,p(\cdot)}} K_{p(\cdot)}.
\]

Note that $[[w]]_{\mathcal{A}_{A,p(\cdot)}}\geq 1$ and also $[[w]]_{\mathcal{A}_{A,p(\cdot)}}\geq [w]_{\mathcal{A}_{A,p(\cdot)}}$.

Some examples of a matrix such that $[w]_{\mathcal{A}_{A,p(\cdot)}} \geq 1$ are matrix with eigenvalues greater than $1$. 

The following results are in the spirit of the lemmas present in \cite{CUDH11,CUP25}. 

\begin{lemma}\label{Lemma 3.3}
Let $A$ be a invertible matrix, $p(\cdot)\in \mathcal{P}(\mathbb{R}^n)$ and $w\in \mathcal{A}_{A,p(\cdot)}$. Then, for any cubes $Q$ and all measurable sets $E\subset Q$ we have
\[\frac{|E|}{|Q|} \leq K_{p(\cdot)} [w]_{\mathcal{A}_{A,p(\cdot)}} \frac{\|w\chi_E\|_{p(\cdot)}}{\|w_A\chi_Q\|_{p(\cdot)}}.
\]
\end{lemma}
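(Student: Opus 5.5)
The inequality follows from the variable Hölder inequality (Lemma \ref{Holder}) combined with the definition of $[w]_{\mathcal{A}_{A,p(\cdot)}}$, in the same way as the analogous lemma for $\mathcal{A}_{p(\cdot)}$ in \cite{CUDH11}. The constant $K_{p(\cdot)}$ is the Hölder constant of Lemma \ref{Holder}, which is also the constant appearing in the proof of Proposition \ref{CLAIM 1}.

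The first step is to write the measure of $E$ as an integral and split the integrand using the weight:
\[
|E| = \int_{\mathbb{R}^n} \chi_E(x)\,dx = \int_{\mathbb{R}^n} \bigl(w(x)\chi_E(x)\bigr)\bigl(w^{-1}(x)\chi_E(x)\bigr)\,dx .
\]
This splitting is valid because we may take $0<w<\infty$ a.e., as is implicit in $\mathcal{A}_{A,p(\cdot)}$: both $\|w_A\chi_Q\|_{p(\cdot)}$ and $\|w^{-1}\chi_Q\|_{p'(\cdot)}$ must be finite and positive for the supremum to make sense.

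The second step is to apply Lemma \ref{Holder}. This gives
\[
|E| \leq K_{p(\cdot)}\,\|w\chi_E\|_{p(\cdot)}\,\|w^{-1}\chi_E\|_{p'(\cdot)} .
\]
Since $E\subset Q$ and the norm is monotone, we also have $\|w^{-1}\chi_E\|_{p'(\cdot)}\leq \|w^{-1}\chi_Q\|_{p'(\cdot)}$.

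The third step uses the definition of the class. For the fixed cube $Q$,
\[
\|w^{-1}\chi_Q\|_{p'(\cdot)} \leq [w]_{\mathcal{A}_{A,p(\cdot)}}\,\frac{|Q|}{\|w_A\chi_Q\|_{p(\cdot)}} .
\]
Substituting this into the Hölder bound and dividing by $|Q|$ yields exactly the claimed inequality
\[
\frac{|E|}{|Q|} \leq K_{p(\cdot)}\,[w]_{\mathcal{A}_{A,p(\cdot)}}\,\frac{\|w\chi_E\|_{p(\cdot)}}{\|w_A\chi_Q\|_{p(\cdot)}} .
\]

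There is no genuine obstacle in this argument. The only point needing care is the division by $\|w_A\chi_Q\|_{p(\cdot)}$, which requires this norm to be strictly positive. This holds because $w_A>0$ a.e.: the map $x\mapsto Ax$ preserves null sets, so $w>0$ a.e. implies $w_A>0$ a.e. If this norm were zero, the right-hand side would be interpreted as $+\infty$ and the inequality would hold trivially. No hypothesis on $p(Ax)$ or on $A$ is needed, which is consistent with the statement.
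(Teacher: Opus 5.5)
Your proposal is correct and is essentially the paper's proof: apply Hölder's inequality (Lemma \ref{Holder}) to $|E|=\int (w\chi_E)(w^{-1}\chi_Q)$, then bound $\|w^{-1}\chi_Q\|_{p'(\cdot)}$ using the definition of $[w]_{\mathcal{A}_{A,p(\cdot)}}$. Your route through $w^{-1}\chi_E$ followed by monotonicity of the norm is only a harmless detour, since the paper pairs $w\chi_E$ with $w^{-1}\chi_Q$ directly.
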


\begin{proof}
Let $E\subset Q$ and $w\in \mathcal{A}_{A,p(\cdot)}$. Then, using Hölder inequality and the definition of  $\mathcal{A}_{A,p(\cdot)}$ we have
\begin{align*}
|E| &=\int \chi_E w w^{-1} \chi_Q 
\leq K_{p(\cdot)} \|w\chi_E\|_{p(\cdot)} \|w^{-1}\chi_Q\|_{p'(\cdot)}
\\&
\leq K_{p(\cdot)} [w]_{\mathcal{A}_{A,p(\cdot)}} \|w\chi_E\|_{p(\cdot)} \|w_A\chi_Q\|_{p(\cdot)}^{-1}|Q|.
\end{align*}
\end{proof}

The following lemma is an adapted version of the Lemma \ref{Lema-LH_0} to the $AQ$ setting.

\begin{lemma}\label{Lema-LH_0-A}
Let $p(\cdot)\in LH_0(\mathbb{R}^n)$ with $p_+<\infty$ and let $A$ be an invertible matrix $n \times n$. If $p(Ax)=p(x)$, a.e. $x\in\mathbb{R}^n$, then, for every cube $Q\subset\mathbb{R}^n$ and every $x\in Q$,
$$
|AQ|^{p_-(Q)-p_+(Q)}\leq C_A,
$$
where the constant $C_A$ is independent of $Q$. More precisely, we can take
\[
C_A=C_{LH_0}\max\left\{1,|\det A|^{-p_+}\right\}.
\]
\end{lemma}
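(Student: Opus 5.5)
The bound reduces to Lemma~\ref{Lema-LH_0} applied to the cube $Q$, after splitting according to the size of $|AQ|$. Since $A$ is linear, $|AQ|=|\det A|\,|Q|$. Also $p_-(Q)-p_+(Q)\le 0$. Hence if $|AQ|\ge 1$ the quantity $|AQ|^{p_-(Q)-p_+(Q)}$ is at most $1$, and there is nothing to prove. So we assume $|AQ|<1$.

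In that case write
\[
|AQ|^{p_-(Q)-p_+(Q)}=|\det A|^{p_-(Q)-p_+(Q)}\,|Q|^{p_-(Q)-p_+(Q)}
\]
and bound the two factors separately.

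For the first factor, use that the exponent $p_-(Q)-p_+(Q)$ lies in $[-p_+,0]$, which holds because $1\le p_-(Q)\le p_+(Q)\le p_+<\infty$.
\begin{itemize}
\item If $|\det A|\ge1$, then $|\det A|^{p_-(Q)-p_+(Q)}\le 1$.
\item If $|\det A|<1$, then $|\det A|^{p_-(Q)-p_+(Q)}\le |\det A|^{-p_+}$.
\end{itemize}
In both cases the first factor is at most $\max\{1,|\det A|^{-p_+}\}$.

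For the second factor, the standard consequence of $LH_0$ behind Lemma~\ref{Lema-LH_0} gives $|Q|^{p_-(Q)-p_+(Q)}\le C_{LH_0}$ for every cube $Q$.
\begin{itemize}
\item If $|Q|\ge1$, the factor is at most $1$.
\item If $|Q|<1$, the diameter of $Q$ is comparable to $|Q|^{1/n}$. The $LH_0$ condition \eqref{LH_0} then gives $p_+(Q)-p_-(Q)\lesssim 1/\log(1/|Q|)$, so the factor is bounded by a constant depending only on $C_0$ and $n$.
\end{itemize}
Large cubes with $|Q|<1$ but diameter $\ge 1/2$ are handled by the trivial bound $p_+(Q)-p_-(Q)\le p_+$ together with the fact that $|Q|$ is bounded below in that range.

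Multiplying the two bounds gives $C_A=C_{LH_0}\max\{1,|\det A|^{-p_+}\}$, independent of $Q$. The hypothesis $p(Ax)=p(x)$ is not needed for this inequality. It only shows that $p_\pm(AQ)=p_\pm(Q)$, so the same bound can be read with $p_\pm(AQ)$ in place of $p_\pm(Q)$. That is likely the form used later, and it is the only point where the invariance enters.

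The main obstacle is keeping the exponent's sign straight: when $|\det A|<1$, the factor $|\det A|^{-(p_+(Q)-p_-(Q))}$ is not automatically bounded by $1$. This is why the bound $p_+(Q)-p_-(Q)\le p_+$ is essential, and why $C_A$ must carry the power $|\det A|^{-p_+}$.
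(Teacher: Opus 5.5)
Your proposal is correct and follows essentially the same route as the paper. Both factor $|AQ|^{p_-(Q)-p_+(Q)}=|\det A|^{p_-(Q)-p_+(Q)}\,|Q|^{p_-(Q)-p_+(Q)}$, bound the determinant factor by $\max\{1,|\det A|^{-p_+}\}$ because the exponent lies in $[-p_+,0]$, and bound the cube factor by the $LH_0$ estimate of Lemma~\ref{Lema-LH_0}; the invariance $p(Ax)=p(x)$ plays no role in either argument. The only difference is that the paper splits the exponent through $p(x)$ and applies Lemma~\ref{Lema-LH_0} to each half, which, multiplied out, squares the $LH_0$ constant. You instead bound $|Q|^{p_-(Q)-p_+(Q)}$ directly, so your version actually delivers a constant of the stated form.
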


\begin{proof}
Since $p_-(Q)-p_+(Q)= p_-(Q)-p(x) + p(x)-p_+(Q)$, it is suffices to prove $|AQ|^{p(x)-p_+(Q)}\leq C_A$. The proof of $|AQ|^{p_-(Q)-p(x)}\leq C_A$ is analogous.

Let $Q\subset\mathbb{R}^n$ be a cube and fix $x\in Q$. 
Since $A$ is invertible, 
\begin{align*}
|AQ|^{p(x)-p_+(Q)}
=
|\det A|^{p(x)-p_+(Q)}
|Q|^{p(x)-p_+(Q)}.
\end{align*}

Since $p(\cdot) \in LH_0 (\mathbb{R}^n)$, by Lemma \ref{Lema-LH_0}, there exists a constant $C_{LH_0}>0$, independent of $Q$, such that
$$
|Q|^{p(x)-p_+(Q)}\leq C_{LH_0}.
$$

Therefore,
$$
|AQ|^{p(x)-p_+(AQ)}
\leq
C_{LH_0}|\det A|^{p(x)-p_+(Q)}.
$$

It remains to estimate the factor involving the determinant. Since $p_+<\infty$,
$$
-p_+\leq p(x)-p_+(Q)\leq0.
$$

We distinguish two cases.
If $|\det A|\geq1$, then, since $p(x)-p_+(Q)\leq0$,
$$
|\det A|^{p(x)-p_+(Q)}
\leq1.
$$

If $0<|\det A|<1$, since
$
p(x)-p_+(Q)\geq-p_+,
$
we obtain
$$
|\det A|^{p(x)-p_+(Q)}
\leq|\det A|^{-p_+}.
$$

Thus, in both cases,
$$
|\det A|^{p(x)-p_+(Q)}
\leq
\max\left\{1,|\det A|^{-p_+}\right\}.
$$

Consequently,
$$
|AQ|^{p(x)-p_+(Q)}
\leq
C_{LH_0}\max\left\{1,|\det A|^{-p_+}\right\} =:C_A 
$$
where $C_A$ depends only on $A$, $p(\cdot)$, and the $LH_0$ constant, and is independent of the cube $Q$ and the point $x$.
\end{proof}

\begin{lemma}\label{Lema-w-LH_0-A}
Given $A$ a invertible matrix, $p(\cdot)\in \mathcal{P}^{\log}(\mathbb{R}^n)$ such that  $p_+<\infty$ and $p(Ax)=p(x)$ a.e. $x$ and $w\in \mathcal{A}_{A,p(\cdot)}$ 
then for all cubes $Q$  we have
\[\|w\chi_{AQ}\|_{p(\cdot)}^{p_-(Q)-p_+(Q)} \leq L_1 [[w]]_{\mathcal{A}_{A,p(\cdot)}}^{p_+-p_-}
\]
where 
\[L_1=C(A,n,p(\cdot),C_{\infty}) C_A \max\{1,\|w_A\chi_{Q_0}\|_{p(\cdot)}^{p_- -p_+}\} 
\]
and $Q_0=Q(0,2e)$.
\end{lemma}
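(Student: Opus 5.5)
Since $p_-(Q)-p_+(Q)\le 0$, the claimed inequality is an upper bound on $\|w\chi_{AQ}\|_{p(\cdot)}^{-1}$ raised to the power $p_+(Q)-p_-(Q)$. So it is enough to bound $\|w\chi_{AQ}\|_{p(\cdot)}$ from below on the scale where this power matters. This is the variable-exponent analogue of the lemma in \cite{CUDH11,CUP25}, and I will follow that scheme. I split into two cases according to whether $\|w\chi_{AQ}\|_{p(\cdot)}\ge 1$ or $<1$.

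In the first case the left-hand side is at most $1$. This is dominated by $L_1[[w]]^{p_+-p_-}_{\mathcal A_{A,p(\cdot)}}$, because $[[w]]\ge1$ and the constants in $L_1$ can be taken to be at least $1$. In the second case I distinguish small and large cubes.

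\emph{Small cubes}, $\ell(Q)\le 1$ say. The aim is a lower bound $\|w\chi_{AQ}\|_{p(\cdot)}\gtrsim |AQ|^{c}$ in terms of a power of $|AQ|$.
\begin{itemize}
\item Hölder's inequality gives $|AQ|\le K_{p(\cdot)}\|w\chi_{AQ}\|_{p(\cdot)}\|w^{-1}\chi_{AQ}\|_{p'(\cdot)}$.
\item By Proposition \ref{UV} (using $p'(Ax)=p'(x)$), the norm $\|w^{-1}\chi_{AQ}\|_{p'(\cdot)}$ is comparable to $\|w_A^{-1}\chi_Q\|_{p'(\cdot)}$. This can in turn be compared with $\|w^{-1}\chi_{R}\|_{p'(\cdot)}$ for a cube $R\supset Q\cup AQ$ with $|R|\approx|Q|$.
\item The $\mathcal A_{A,p(\cdot)}$ condition then bounds this norm by $[w]|R|\,\|w_A\chi_R\|^{-1}_{p(\cdot)}$.
\item Finally I compare $\|w_A\chi_R\|_{p(\cdot)}$ with a fixed reference quantity. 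I enlarge $R$ to a cube containing $Q_0$, or use Lemma \ref{Lemma 3.3} with $E=Q_0\cap R$, to get a lower bound involving $\|w_A\chi_{Q_0}\|_{p(\cdot)}$.
\end{itemize}
The result is a bound $\|w\chi_{AQ}\|_{p(\cdot)}^{-1}\lesssim [[w]]\,|AQ|^{-1}\cdot(\text{factor from }Q_0)$. Raising to the power $p_+(Q)-p_-(Q)$ and using Lemma \ref{Lema-LH_0-A}, the factor $|AQ|^{-(p_+(Q)-p_-(Q))}$ is bounded by $C_A$. The remaining powers are bounded by $[[w]]^{p_+-p_-}$ and by $\max\{1,\|w_A\chi_{Q_0}\|^{p_--p_+}\}$, since $p_+(Q)-p_-(Q)\le p_+-p_-$ and all the bases are at least $1$ after normalization.

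\emph{Large cubes}. Here Lemma \ref{Lema-LH_0} gives no smallness, so I use $LH_\infty$ instead.
\begin{itemize}
\item If $Q$ or $AQ$ is far from the origin and $\ell(Q)\ge1$, then $p_+(Q)-p_-(Q)\lesssim C_\infty/\log(e+\operatorname{dist}(0,Q))$.
\item If $Q$ meets a neighbourhood of the origin, I instead compare $AQ$ with a cube containing $Q_0$, via Lemma \ref{Lemma 3.3} applied with $E=Q_0$, which gives $\|w\chi_{AQ}\|_{p(\cdot)}\gtrsim \|w_A\chi_{Q_0}\|_{p(\cdot)}/[[w]]$ up to a ratio $|Q_0|/|Q|$.
\end{itemize}
The polynomial loss $|Q|^{\,p_+(Q)-p_-(Q)}$ is then controlled because the exponent oscillation decays like $1/\log(e+|x|)$. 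This yields the constant $C(A,n,p(\cdot),C_\infty)$.

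The main obstacle is this large-cube regime. Balancing the growth of $|Q|/|Q_0|$ against the $LH_\infty$ decay of $p_+(Q)-p_-(Q)$ needs care, in particular when $Q$ is large but close to the origin, where the oscillation need not be small. I would first try the argument of \cite[Lemma 3.3]{CUDH11} verbatim, with $Q$ replaced by $AQ$ and $w$ by $w_A$, transporting norms via Proposition \ref{UV}.
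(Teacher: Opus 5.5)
\textbf{There is a genuine gap.} It sits exactly where you place your ``main obstacle'', and it comes from applying Lemma~\ref{Lemma 3.3} with the wrong set. Your trivial case $\|w\chi_{AQ}\|_{p(\cdot)}\ge 1$ is fine, and so is the small-cube-near-the-origin step.

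Lemma~\ref{Lemma 3.3} with $E=Q_0$ (or $E=Q_0\cap R$, which may even be empty) gives, for a cube $P\supset Q_0$,
\[
\|w_A\chi_P\|_{p(\cdot)}\le K_{p(\cdot)}[w]_{\mathcal{A}_{A,p(\cdot)}}\frac{|P|}{|Q_0|}\|w\chi_{Q_0}\|_{p(\cdot)}.
\]
This is an \emph{upper} bound for a $w_A$-norm. It gives no lower bound for $\|w\chi_{AQ}\|_{p(\cdot)}$. Since that norm must sit in the numerator, the set in Lemma~\ref{Lemma 3.3} has to be $E=AQ$. For any cube $P\supset AQ\cup Q_0$, the lemma together with monotonicity $\|w_A\chi_P\|_{p(\cdot)}\ge\|w_A\chi_{Q_0}\|_{p(\cdot)}$ gives
\[
\|w\chi_{AQ}\|_{p(\cdot)}^{-1}\le [[w]]_{\mathcal{A}_{A,p(\cdot)}}\,\frac{|P|}{|AQ|}\,\|w_A\chi_{Q_0}\|_{p(\cdot)}^{-1}.
\]
So the whole lemma reduces to bounding $(|P|/|AQ|)^{p_+(Q)-p_-(Q)}$ for a well-chosen $P$.

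In the case you could not close, $\operatorname{diam}(AQ)>\ell(Q_0)$ and $\operatorname{dist}(AQ,Q_0)\le\operatorname{diam}(AQ)$, take $P$ centred at $Ac_Q$ with side a fixed multiple of $\operatorname{diam}(AQ)$. Then $1\le|P|/|AQ|\le C(n)\|A\|^n/|\det A|$. The power is therefore at most a constant raised to $p_+-p_-$, however large the oscillation of $p$ on $Q$. The ratio $|Q_0|/|Q|$, and the need to balance it against $LH_\infty$, are artifacts of choosing $E=Q_0$.

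\textbf{Small cubes far from the origin.} Your small-cube argument finishes using only $LH_0$ via Lemma~\ref{Lema-LH_0-A}. That works only when $AQ$ is near $Q_0$. If $AQ$ is small but at distance $D\gg1$ from the origin, every cube containing $AQ\cup Q_0$ has measure $\gtrsim D^n$. The factor $|P|^{p_+(Q)-p_-(Q)}$ is then not controlled by $LH_0$.

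You need $LH_\infty$ here as well. Since $p(Ax)=p(x)$, we have $p_\pm(Q)=p_\pm(AQ)$, so
\[
p_+(Q)-p_-(Q)\le 2C_\infty/\log(e+\operatorname{dist}(0,AQ)).
\]
This makes $|P|^{p_+(Q)-p_-(Q)}$ bounded when $P$ is centred at $0$ with side comparable to $\operatorname{dist}(AQ,Q_0)$. Meanwhile $|AQ|^{p_-(Q)-p_+(Q)}\le C_A$ by Lemma~\ref{Lema-LH_0-A}, for cubes of any size.

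A smaller point: a cube $R\supset Q\cup AQ$ with $|R|\approx|Q|$ need not exist, for instance when $A$ is a rotation and $Q$ is a small cube far from $0$. You only need $R\supset AQ$.

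\textbf{The correct split.} Divide by $\operatorname{diam}(AQ)$ versus $\ell(Q_0)$, and by $\operatorname{dist}(AQ,Q_0)$ versus $\max\{\ell(Q_0),\operatorname{diam}(AQ)\}$. Then:
\begin{enumerate}
\item small cubes near $Q_0$ are handled by $LH_0$;
\item large cubes near $Q_0$ are handled by the bounded ratio $|P|/|AQ|$, with no oscillation estimate;
\item far cubes of any size are handled by $LH_\infty$.
\end{enumerate}
This is exactly the paper's four-case proof.
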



\begin{proof}
The proof follows the ideas of \cite{CUDH11,CUP25} with several changes that involve matrix A. 
Consider $w\in \mathcal{A}_{A,p(\cdot)}$.
Fix a cube $Q$ and $Q_0=Q(0,e)$.  
Observe that if $\|w\chi_{AQ}\|_{p(\cdot)}\geq 1$, the desired estimate is immediate. We can assume $\|w\chi_{AQ}\|_{p(\cdot)}< 1$.

Now, we split the proof in several cases according the diameter of $AQ$, side lenght of $Q_0$ and the distance between $AQ$ and $Q_0$.

The first case, suppose diam$(AQ)\leq \ell(Q_0)$ and dist$(AQ,Q_0)\leq \ell(Q_0)$. Then $AQ\subset Q_0^5$.
Then using Lemma \ref{Lemma 3.3} we have
\begin{align*}
|AQ|&\leq K_{p(\cdot)} |5Q_0| [w]_{\mathcal{A}_{A,p(\cdot)}} \|w\chi_{AQ}\|_{p(\cdot)}\|w_A\chi_{5Q_0}\|_{p(\cdot)}^{-1}
\\&\leq |5Q_0| \, [[w]]_{\mathcal{A}_{A,p(\cdot)}} \|w\chi_{AQ}\|_{p(\cdot)}\|w_A\chi_{Q_0}\|_{p(\cdot)}^{-1}.
\end{align*}

Thus,  by Lemma \ref{Lema-LH_0-A} 
\begin{align*}
\|w\chi_{AQ}\|_{p(\cdot)}^{p_-(Q)-p_+(Q)} 
&\leq  \left( \frac{|5Q_0|}{|AQ|} \right)^{p_+(Q)-p_-(Q)} [[w]]_{\mathcal{A}_{A,p(\cdot)}}^{p_+(Q)-p_-(Q)} \|w_A\chi_{Q_0}\|_{p(\cdot)}^{p_-(Q)-p_+(Q)}
\\& \leq   |5Q_0|^{p_+-p_-} C_A [[w]]_{\mathcal{A}_{A,p(\cdot)}}^{p_+-p_-}  \|w_A\chi_{Q_0}\|_{p(\cdot)}^{p_-(Q)-p_+(Q)} .
\end{align*}

For the second case, suppose $\text{diam}(AQ)\leq \ell(Q_0)$ and dist$(AQ,Q_0)>\ell(Q_0)$. Let $\tilde{Q}=Q(0, 3\,\text{dist}(AQ,Q_0))$ then $AQ,Q_0\subset \tilde{Q}$ and $\ell(\tilde{Q})\leq 3\text{dist}(AQ,0)$.

Now, using Lemma \ref{Lemma 3.3} we have
\begin{align*}
|AQ|&\leq K_{p(\cdot)} [w]_{\mathcal{A}_{A,p(\cdot)}} \|w\chi_{AQ}\|_{p(\cdot)} |\tilde{Q}| \|w_A\chi_{\tilde{Q}}\|_{p(\cdot)}^{-1}
\\&\leq  [[w]]_{\mathcal{A}_{A,p(\cdot)}} \|w\chi_{AQ}\|_{p(\cdot)} |\tilde{Q}|  \|w_A\chi_{Q_0}\|_{p(\cdot)}^{-1}.
\end{align*}
Thus, 
\begin{align*}
\|w\chi_{AQ}\|_{p(\cdot)}^{p_-(Q)-p_+(Q)} 
&\leq  [[w]]_{\mathcal{A}_{A,p(\cdot)}}^{p_+(Q)-p_-(Q)} \|w_A\chi_{Q_0}\|_{p(\cdot)}^{p_-(Q)-p_+(Q)} \left(\frac{|\tilde{Q}|}{|AQ|}\right)^{p_+(Q)-p_-(Q)}
\\& \leq  [[w]]_{\mathcal{A}_{A,p(\cdot)}}^{p_+-p_-} \|w_A\chi_{Q_0}\|_{p(\cdot)}^{p_-(Q)-p_+(Q)} |\tilde{Q}|^{p_+(Q)-p_-(Q)} C_A.
\end{align*}
The estimate of $|\tilde{Q}|^{p_+(Q)-p_-(Q)}$ is analogous to the estimate in  \cite{CUP25}. That is, 
\[|\tilde{Q}|^{p_+(Q)-p_-(Q)}\leq 3^{n(p_+-p_-)}e^{2nC_{\infty}}
\]
then 
\[
\|w\chi_{AQ}\|_{p(\cdot)}^{p_-(Q)-p_+(Q)} 
\leq   2^{n(p_+-p_-)}e^{2nC_{\infty}}C_D [[w]]_{\mathcal{A}_{A,p(\cdot)}}^{p_+-p_-} \|w_A\chi_{Q_0}\|_{p(\cdot)}^{p_-(Q)-p_+(Q)}
\]

For the third case, suppose  diam$(AQ)>\ell(Q_0)$ and dist$(AQ,Q_0)\leq \text{diam}(AQ)$, then $AQ,Q_0\subset Q_A=Q(Ac_Q,5\,\text{diam}(AQ))$.

By Lemma \ref{Lemma 3.3} we have 
\begin{align*}
|AQ| 
&\leq K_{p(\cdot)}|Q_A| [w]_{\mathcal{A}_{A,p(\cdot)}} \|w\chi_{AQ}\|_{p(\cdot)}\|w_A\chi_{Q_A}\|_{p(\cdot)}^{-1}
\\&\leq |Q_A| [[w]]_{\mathcal{A}_{A,p(\cdot)}} \|w\chi_{AQ}\|_{p(\cdot)}\|w_A\chi_{Q_0}\|_{p(\cdot)}^{-1}.
\end{align*}
Observe that $1\leq \frac{|Q_A|}{|AQ|}\leq \frac{5^n\|A\|^n\sqrt{n}}{|\det(A)|}=:d_A$. Thus, 
\begin{align*}
\|w\chi_{AQ}\|_{p(\cdot)}^{p_-(Q)-p_+(Q)} 
&\leq d_A^{n(p_+-p_-)}[[w]]_{\mathcal{A}_{A,p(\cdot)}}^{p_+-p_-} \|w_A\chi_{Q_0}\|_{p(\cdot)}^{p_-(Q)-p_+(Q)}.
\end{align*}

For the last case, suppose diam$(AQ)>\ell(Q_0)$ and dist$(AQ,Q_0)> \text{diam}(AQ)$. Consider $\tilde{Q}=Q(0,3\,\text{dist}(AQ,Q_0))$ then $AQ,Q_0\subset \tilde{Q}$.

By Lemma \ref{Lemma 3.3} we have 
\[|AQ| \leq |\tilde{Q}| [[w]]_{\mathcal{A}_{A,p(\cdot)}} \|w\chi_{AQ}\|_{p(\cdot)}\|w_A\chi_{Q_0}\|_{p(\cdot)}^{-1}.
\]
Thus, 
\begin{align*}
\|w\chi_{AQ}\|_{p(\cdot)}^{p_-(Q)-p_+(Q)} 
&\leq [[w]]_{\mathcal{A}_{A,p(\cdot)}}^{p_+(Q)-p_-(Q)} \|w_A\chi_{Q_0}\|_{p(\cdot)}^{p_-(Q)-p_+(Q)}\left(\frac{|\tilde{Q}|}{|AQ|}\right)^{p_+(Q)-p_-(Q)}
\\&\leq [[w]]_{\mathcal{A}_{A,p(\cdot)}}^{p_+-p_-} \|w_A\chi_{Q_0}\|_{p(\cdot)}^{p_-(Q)-p_+(Q)} C_A 2^{n(p_+-p_-)}e^{2nC_{\infty}}.
\end{align*}

Summary of the estimate proved above, for a cube $Q$ fixed, we have 

\[
\|w\chi_{AQ}\|_{p(\cdot)}^{p_-(Q)-p_+(Q)}
\leq C_1(n,p(\cdot),C_{\infty})C_A [[w]]_{\mathcal{A}_{A,p(\cdot)}}^{p_+-p_-} \|w_A\chi_{Q_0}\|_{p(\cdot)}^{p_-(Q)-p_+(Q)}
\]

where $C_1(n,p(\cdot),C_{\infty})$ is the maximum between the constants in all cases.  

Now, we estimate $ \|w_A\chi_{Q_0}\|_{p(\cdot)}^{p_+(Q)-p_-(Q)}$.
If $\|w_A\chi_{Q_0}\|_{p(\cdot)}<1$ then $\|w_A\chi_{Q_0}\|_{p(\cdot)}^{p_-(Q)-p_+(Q)}\leq \|w_A\chi_{Q_0}\|_{p(\cdot)}^{p_--p_+}$. On the other hand, if $\|w_A\chi_{Q_0}\|_{p(\cdot)}\geq 1$ then  $\|w_A\chi_{Q_0}\|_{p(\cdot)}^{p_-(Q)-p_+(Q)}\leq 1$. Then
\[\|w_A\chi_{Q_0}\|_{p(\cdot)}^{p_-(Q)-p_+(Q)}\leq \max\left\{1,\|w_A\chi_{Q_0}\|_{p(\cdot)}^{p_--p_+}\right\}.\]

Hence, 
\[
\|w\chi_Q\|_{p(\cdot)}^{p_-(Q)-p_+(Q)}
\leq C(A,n,p(\cdot),C_{\infty})[[w]]_{\mathcal{A}_{A,p(\cdot)}}^{p_+-p_-} \max\left\{1,\|w_A\chi_{Q_0}\|_{p(\cdot)}^{p_--p_+}\right\}
\]
where 
\[C(A,n,p(\cdot),C_{\infty})=C_1(n,p(\cdot),C_{\infty}) C_A. \]
\end{proof}

Now, we present modular estimates for our class of weights. 

\begin{lemma}\label{Control modular}
Let $A$ be a invertible matrix, $p(\cdot)\in \mathcal{P}^{\log}(\mathbb{R}^n)$ such that $p_+<\infty$ and $p(Ax)=p(x)$ a.e. $x$ and $w\in \mathcal{A}_{A,p(\cdot)}$. 
Then for all cubes $Q$, we have
\[\rho(w_A\chi_Q) \leq  L_2 [[w]]_{\mathcal{A}_{A,p(\cdot)}}^{\max\left\{2p_+-p_-,2C_\infty\frac{p_+}{p_-}+p_{\infty}\right\}}   \rho(w\chi_Q)\]
where
\[L_2=C(A,n,p(\cdot)) \max\left\{\|w_A\chi_{Q_0}\|_{p(\cdot)}^{p_--p_+},\|w_A\chi_{Q_0}\|_{p(\cdot)}^{p_+-p_-}\right\}\]
and $Q_0=Q(0,2e)$. 
\end{lemma}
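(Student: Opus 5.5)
The plan is to follow the scheme of the modular estimates in \cite{CUDH11,CUP25}, with Lemma \ref{Lema-w-LH_0-A} replacing its unweighted analogue. Write $\rho=\rho_{p(\cdot)}$ and fix a cube $Q$. First reduce to norms. Put $t_1=\|w_A\chi_Q\|_{p(\cdot)}$ and $t_2=\|w\chi_Q\|_{p(\cdot)}$. By Lemma \ref{norma-modular} applied on $\Omega=Q$, we have $\rho(w_A\chi_Q)\le \max\{t_1^{p_-(Q)},t_1^{p_+(Q)}\}$ and $\rho(w\chi_Q)\ge \min\{t_2^{p_-(Q)},t_2^{p_+(Q)}\}$. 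It therefore suffices to show
\[
t_1^{a}\lesssim [[w]]^{\,\cdot}\, t_2^{b}, \qquad a\in\{p_-(Q),p_+(Q)\}\ \text{chosen by the size of } t_1,\ b \text{ chosen by the size of } t_2 .
\]
When $a=b$, this reduces to the norm comparison $t_1\lesssim t_2$.

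The second step is that norm comparison, $\|w_A\chi_Q\|_{p(\cdot)}\lesssim \|w\chi_{Q'}\|_{p(\cdot)}$, with the right-hand cube $Q'$ eventually being $Q$ itself. I will apply Lemma \ref{Lemma 3.3} with $E=Q$ to get $\|w_A\chi_Q\|_{p(\cdot)}\le K_{p(\cdot)}[w]_{\mathcal{A}_{A,p(\cdot)}}\|w\chi_Q\|_{p(\cdot)}$, which is exactly what is needed. When the exponents $a$ and $b$ differ, there is a mismatch factor of the form $t^{p_-(Q)-p_+(Q)}$ or its reciprocal. I control it with Lemma \ref{Lema-w-LH_0-A}, and with its analogue for $\|w_A\chi_Q\|$, obtained by the same case analysis or through Proposition \ref{UV} since $p(Ax)=p(x)$. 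This gives factors $L_1[[w]]^{p_+-p_-}$, which combine with the $[[w]]^{p_+}$ coming from raising $t_1\lesssim[[w]] t_2$ to a power at most $p_+$. The total power is at most $2p_+-p_-$, matching the first entry of the maximum. The factor $\max\{\|w_A\chi_{Q_0}\|^{p_--p_+},\|w_A\chi_{Q_0}\|^{p_+-p_-}\}$ in $L_2$ comes from $L_1$ and from its reciprocal.

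The main obstacle is that the mismatch constant must be uniform for large or far cubes. For those, Lemma \ref{Lema-LH_0-A} is useless, and one needs the $LH_\infty$ regime. Here I will follow \cite{CUP25}: split $Q$ into the set where $w_A$ is large and the set where $F=w_A/\|\cdot\|\le 1$. On the second set, apply Lemma \ref{Lema-LH_infty} with $s(\cdot)=p(\cdot)$, $r\equiv p_\infty$ and $\mu$ Lebesgue measure, which exchanges $p(y)$ for $p_\infty$ at the cost of the term $\int (e+|y|)^{-Np_-}$. This remainder is then absorbed by $\rho(w\chi_Q)$ using Lemma \ref{Lemma 3.3} against $Q_0$. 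Tracking the exponents in this step, with $N$ chosen in terms of $C_\infty$, is where the second entry $2C_\infty p_+/p_-+p_\infty$ of the maximum should appear. This exponent bookkeeping is the step I expect to be most delicate.
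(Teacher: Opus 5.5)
\paragraph{Verdict.} The proposal has a genuine gap in the case where both norms exceed $1$.

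\paragraph{What works.} Your treatment of the cases in which $t_1=\|w_A\chi_Q\|_{p(\cdot)}$ or $t_2=\|w\chi_Q\|_{p(\cdot)}$ is at most $1$ is correct, and it is what the paper does. Lemma~\ref{Lemma 3.3} with $E=Q$ gives $t_1\le K_{p(\cdot)}[w]_{\mathcal{A}_{A,p(\cdot)}}t_2$. When $t_1,t_2\le 1$, the factor $t_1^{p_-(Q)-p_+(Q)}$ is bounded through Proposition~\ref{UV} and Lemma~\ref{Lema-w-LH_0-A}, which yields the exponent $2p_+-p_-$. No special treatment of large or far cubes is needed there, because Lemma~\ref{Lema-w-LH_0-A} holds for all cubes.

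\paragraph{Where it fails.} In the case $t_1,t_2>1$, your Step~1 reduces the lemma to $t_1^{p_+(Q)}\lesssim t_2^{p_-(Q)}$. This is false in general, so no bound on a ``mismatch factor'' can rescue it. Take $n=2$, $A=I$, $w\equiv1$, $p(x)=2+e^{-|x|^2}$ and $Q=Q(0,R)$ with $R$ large. Then $p_+(Q)=3$, $p_-(Q)\to 2$, and $t_1=t_2=\|\chi_Q\|_{p(\cdot)}\approx R$. Hence $t_1^{p_+(Q)}\approx R^3$ while $t_2^{p_-(Q)}\approx R^2$, even though $\rho(w_A\chi_Q)=\rho(w\chi_Q)$. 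The mismatch in this case is a positive power $t^{p_+(Q)-p_-(Q)}$ of a norm larger than $1$, and Lemma~\ref{Lema-w-LH_0-A} says nothing about it. That lemma bounds $\|w\chi_{AQ}\|_{p(\cdot)}^{p_-(Q)-p_+(Q)}$ from above, which is trivially at most $1$ when the norm exceeds $1$. Taking reciprocals turns that into a lower bound, not the upper bound you need.

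\paragraph{What the paper does instead.} Here the crude comparisons $\rho\le t^{p_+(Q)}$ and $\rho\ge t^{p_-(Q)}$ must be replaced by comparisons with the $p_\infty$-power:
\[
\rho(w_A\chi_Q)\lesssim[[w]]^{C_\infty p_+/p_-}t_1^{p_\infty}
\quad\text{and}\quad
t_2^{p_\infty}\lesssim[[w]]^{C_\infty p_+/p_-}\rho(w\chi_Q).
\]
Combined with $t_1^{p_\infty}\le(K_{p(\cdot)}[w]_{\mathcal{A}_{A,p(\cdot)}})^{p_\infty}t_2^{p_\infty}$, these produce the exponent $2C_\infty p_+/p_-+p_\infty$. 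The paper obtains them, as in \cite{CUP25}, from Lemma~\ref{Lema-LH_infty} applied with the \emph{weighted} measure $d\mu=w_A^{p(x)}\,dx$ (resp.\ $w^{p(x)}\,dx$) and the \emph{constant} function $F\equiv t_1^{-1}\le 1$ (resp.\ $t_2^{-1}$). With these choices one side is $\rho(w_A\chi_Q/t_1)=1$ and the other is $t_1^{-p_\infty}\rho(w_A\chi_Q)$.

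\paragraph{Why your version of this step does not work.} You use Lebesgue measure and $F=w_A/t_1$ on $\{w_A\le t_1\}$. This only exchanges $(w_A/t_1)^{p(y)}$ for $(w_A/t_1)^{p_\infty}$, so it controls $\int w_A^{p_\infty}$ rather than $\rho(w_A\chi_Q)$. It says nothing on $\{w_A>t_1\}$, where Lemma~\ref{Lema-LH_infty} does not apply. Its remainder is also a constant independent of $w$.

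\paragraph{How the weight enters the remainder.} In the actual argument the remainder $\int_Q w_A^{p(x)}(e+|x|)^{-tnp_-}\,dx$ carries the weight. It is summed over $Q_k=Q(0,2e^{k+1})$ using $\|w_A\chi_{Q_k}\|_{p(\cdot)}\le[[w]]e^{kn}\|w\chi_{Q_0}\|_{p(\cdot)}$, which is Lemma~\ref{Lemma 3.3} applied against $Q_0$. The parameter $t$ is chosen depending on $[[w]]$, and this is what turns $e^{ntC_\infty}$ into a power of $[[w]]$. For $w$ one sums over $AQ_k$ instead and uses $\rho(w\chi_{AQ_k})=|\det A|\,\rho(w_A\chi_{Q_k})$.

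\paragraph{Origin of the factor in $L_2$.} The factor $\max\{\|w_A\chi_{Q_0}\|_{p(\cdot)}^{p_--p_+},\|w_A\chi_{Q_0}\|_{p(\cdot)}^{p_+-p_-}\}$ does not come from ``$L_1$ and its reciprocal''. This case is proved under the normalization $\|w_A\chi_{Q_0}\|_{p(\cdot)}=1$. The factor is the price of rescaling $w$ to $w/\|w_A\chi_{Q_0}\|_{p(\cdot)}$.
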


\begin{proof}
Fix a cube $Q$ and let $Q_0=Q(0,2e)$. 
We consider several cases depending on the size of $\|w_A\chi_Q\|_{p(\cdot)}$ and $\|w\chi_Q\|_{p(\cdot)}$.

The first case is $\|w_A\chi_Q\|_{p(\cdot)},\|w\chi_Q\|_{p(\cdot)} \leq 1$. 
By Lemma \ref{Lemma 3.3} we obtain
\[
1=\frac{|Q|}{|Q|} 
\leq K_{p(\cdot)} [w]_{\mathcal{A}_{A,p(\cdot)}} \frac{\|w\chi_Q\|_{p(\cdot)}}{\|w_A\chi_Q\|_{p(\cdot)}}
\leq [[w]]_{\mathcal{A}_{A,p(\cdot)}} \frac{\|w\chi_Q\|_{p(\cdot)}}{\|w_A\chi_Q\|_{p(\cdot)}}.
\]
By Lemma \ref{norma-modular} we have $\|w\chi_Q\|_{p(\cdot)}\leq  \rho(w\chi_Q)^{\frac1{p_+(Q)}}$ and $\|w_A\chi_Q\|_{p(\cdot)}\geq \rho(w_A\chi_Q)^{\frac1{p_-(Q)}}$. Then
\begin{align*}
1&\leq\left([[w]]_{\mathcal{A}_{A,p(\cdot)}}\frac{\|w\chi_Q\|_{p(\cdot)}}{\|w_A\chi_Q\|_{p(\cdot)}}\right)^{\frac{p_+(Q)}{p_+}}
\\&
\leq [[w]]_{\mathcal{A}_{A,p(\cdot)}}^{\frac{p_+(Q)}{p_+}}\left(\frac{\|w\chi_Q\|_{p(\cdot)}^{p_+(Q)}}{\|w_A\chi_Q\|_{p(\cdot)}^{p_-(Q)}}\right)^{\frac1{p_+}}\|w_A\chi_Q\|_{p(\cdot)}^{\frac{p_-(Q)-p_+(Q)}{p_+}}
\\&\leq [[w]]_{\mathcal{A}_{A,p(\cdot)}} \left(\frac{\rho(w\chi_Q)}{\rho(w_A\chi_Q)}\right)^{\frac1{p_+}}\left(\|w_A\chi_Q\|_{p(\cdot)}^{p_-(Q)-p_+(Q)}\right)^{\frac1{p_+}}.
\end{align*}
Then by Proposition \ref{UV} and Lemma \ref{Lema-w-LH_0-A} we have 
\begin{align*}
\|w_A\chi_Q\|_{p(\cdot)}^{p_-(Q)-p_+(Q)}&=\|(w\chi_{AQ}) \circ A\|_{p(\cdot)}^{p_-(Q)-p_+(Q)}
\lesssim  \|w\chi_{AQ}\|_{p(\cdot)}^{p_-(Q)-p_+(Q)} 
\\&\leq C(A,n,p(\cdot)) [w]_{\mathcal{A}_{A,p(\cdot)}}^{p_+-p_-}.
 \end{align*}

Then, 
\[
\rho(w_A\chi_Q) \leq C(A,n,p(\cdot)) [[w]]_{\mathcal{A}_{A,p(\cdot)}}^{2p_+-p_-} \rho(w\chi_Q).
\]

The second case is $\|w\chi_Q\|_{p(\cdot)} \leq 1 < \|w_A\chi_Q\|_{p(\cdot)}$.
Then,  by Lemma \ref{Lemma 3.3}
\begin{align*}
1=\frac{|Q|}{|Q|} 
&\leq K_{p(\cdot)} [w]_{\mathcal{A}_{A,p(\cdot)}} \frac{\|w\chi_Q\|_{p(\cdot)}}{\|w_A\chi_Q\|_{p(\cdot)}}
\leq  [[w]]_{\mathcal{A}_{A,p(\cdot)}} \left(\frac{\rho(w\chi_Q)}{\rho(w_A\chi_Q)}\right)^{\frac1{p_+}}.
\end{align*}

The third  case is $\|w_A\chi_Q\|_{p(\cdot)} \leq 1 < \|w \chi_Q\|_{p(\cdot)}$.
Then,  by Lemma \ref{Lemma 3.3}
\begin{align*}
1=\frac{|Q|}{|Q|} 
&\leq K_{p(\cdot)} [w]_{\mathcal{A}_{A,p(\cdot)}} \frac{\|w\chi_Q\|_{p(\cdot)}}{\|w_A\chi_Q\|_{p(\cdot)}}
\leq [[w]]_{\mathcal{A}_{A,p(\cdot)}} \left(\frac{\rho(w\chi_Q)}{\rho(w_A\chi_Q)}\right)^{\frac1{p_-}}.
\end{align*}

The fourth and last case is  $\|w_A\chi_Q\|_{p(\cdot)},\|w\chi_Q\|_{p(\cdot)} > 1$.
We proceed as in \cite[Lemma 3.5]{CUP25}, we only mentioned the main changes.

Let $\lambda =\|w_A\chi_Q\|_{p(\cdot)}$ Then by Lemma \ref{Lema-LH_infty} with $d\mu(x) = w_A(x)^{p(x)} \, dx$, for all $t>0$ 
\begin{align*}
\int_Q \lambda^{-p_{\infty}}  w_A(x)^{p(x)} \, dx
&\leq e^{ntC_{\infty}}\int_Q \lambda^{-p(x)}  w_A(x)^{p(x)} \, dx +\int_Q \frac{w_A(x)^{p(x)}}{(e+|x|)^{tnp_-}} \, dx 
\\&=e^{ntC_{\infty}}\rho \left(\frac{w_A\chi_Q}{\lambda}\right) +\int_Q \frac{w_A(x)^{p(x)}}{(e+|x|)^{tnp_-}} \, dx.
\end{align*}

Observe that $\rho \left(\frac{w_A\chi_Q}{\lambda}\right)=1$. 
Now, we need to estimate the second term. 
For each $k\in \mathbb{N}$ define $Q_k=Q(0,2e^{k+1})$. Then 
\[
\int_{\mathbb{R}^n} \frac{w_A(x)^{p(x)}}{e+|x|^{tnp_-}} \, dx \leq e^{-tnp_-}  +\sum_{k=1}^{\infty}  e^{-kntp_-} \|w_A \chi_{Q_k}\|_{p(\cdot)}^{p_+}.
\]

Observe that by Lemma \ref{Lemma 3.3} applied to $Q_0\subset Q_k$ we have
\[
 \|w_A \chi_{Q_k}\|_{p(\cdot)} \leq K_{p(\cdot)}[w]_{\mathcal{A}_{A,p(\cdot)}} \frac{|Q_k|}{|Q_0|} \|w\chi_{Q_0}\|_{p(\cdot)}
\leq [[w]]_{\mathcal{A}_{A,p(\cdot)}} e^{kn} \|w\chi_{Q_0}\|_{p(\cdot)}.
\]
In particular, $1=\|w_A \chi_{Q_0}\|_{p(\cdot)} \leq
[[w]]_{\mathcal{A}_{A,p(\cdot)}}\|w\chi_{Q_0}\|_{p(\cdot)}$.

Taking $t_1=\frac{p_+}{p_-} +\frac1{np_-} \log\left(2\,[[w]]_{\mathcal{A}_{A,p(\cdot)}}^{p_+}\|w\chi_{Q_0}\|_{p(\cdot)}^{p_+}\right)$, we have
\begin{align*}
\int_Q \lambda^{-p_{\infty}}  w_A(x)^{p(x)} \, dx
&\leq e^{nt_1 C_{\infty}}+ \left[ e^{-t_1np_-} + [[w]]_{\mathcal{A}_{A,p(\cdot)}}^{p_+}\|w\chi_{Q_0}\|_{p(\cdot)}^{p_+} \sum_{k=1}^{\infty}  e^{kn(p_+-t_1p_-)}\right]
\\&\leq 2^{1+\frac{C_{\infty}}{p_-}}\left[e^n [[w]]_{\mathcal{A}_{A,p(\cdot)}}\|w\chi_{Q_0}\|_{p(\cdot)}\right]^{C_{\infty}\frac{p_+}{p_-}}.  
\end{align*}
Then, 
\begin{align}\label{eq: control-wA-Q}
\|w_A\chi_Q\|_{p(\cdot)}=\lambda 
\geq \frac{\rho(w_A\chi_Q)^{\frac1{p_{\infty}}}}{C_1(n,p(\cdot))\left[[w]_{\mathcal{A}_{A,p(\cdot)}}\|w\chi_{Q_0}\|_{p(\cdot)}\right]^{\frac{C_{\infty}}{p_{\infty}}\frac{p_+}{p_-}}} .
\end{align}

Similarly, replacing $p_\infty$ and $p(x)$ we have
\[
1=\int_Q \|w\chi_Q\|_{p(\cdot)}^{-p(x)} w(x)^{p(x)} \, dx 
\leq e^{nsC_{\infty}} \int_Q \|w\chi_Q\|_{p(\cdot)}^{-p_{\infty}} w(x)^{p(x)} \, dx 
+\int_Q \frac{w(x)^{p(x)}}{(e+|x|)^{snp_-}}\,dx.
\]
 
We can split $\mathbb{R}^n$ using $AQ_k$ instead of $Q_k$, $\mathbb{R}^n = A Q_0 \cup \bigcup_{k=1 }^{\infty} (AQ_k \setminus AQ_{k-1})$.
Observe that 
\[\rho( w_A \chi_Q) =\rho( (w\chi_{AQ})\circ A)=|\det(A^{-1})|\rho(w\chi_{AQ}).\]
Then for all $s>0$ we have 
\begin{align*}
\int_{\mathbb{R}^n} \frac{w(x)^{p(x)}}{(e+|x|)^{snp_-}} \, dx
&= \int_{AQ_0} \frac{w(x)^{p(x)}}{(e+|x|)^{snp_-}} \, dx +\sum_{k=1}^{\infty} \int_{AQ_k\setminus AQ_{k-1}} \frac{w(x)^{p(x)}}{(e+|x|)^{snp_-}} \, dx
\\&\leq e^{-snp_-}  \rho(w\chi_{AQ_0})+c_A\sum_{k=1}^{\infty}  e^{-knsp_-} \rho(w \chi_{AQ_k})
\\&\leq e^{-snp_-} |\det(A)| \rho(w_A\chi_{Q_0})+ |\det(A)| \sum_{k=1}^{\infty}  e^{-knsp_-} \rho(w_A \chi_{Q_k}).
\end{align*}
Since $\|w_A\chi_{Q_0}\|_{p(\cdot)}=1$ then $\|w_A\chi_{Q_k}\|_{p(\cdot)}\geq 1$ and $\rho(w_A\chi_{Q_k}) \leq \|w_A\chi_{Q_k}\|_{p(\cdot)}^{p_+}$. That implies
\begin{align*}
\int_{\mathbb{R}^n} \frac{w(x)^{p(x)}}{(e+|x|)^{snp_-}} \, dx
\leq e^{-snp_-} |\det(A)| + |\det(A)| \sum_{k=1}^{\infty}  e^{-knsp_-} \|w_A \chi_{Q_k}\|_{p(\cdot)}^{p_+}.
\end{align*}

As in the above estimate, we can take $s_1>\frac{p_+}{p-}$ such that 
\[\int_{\mathbb{R}^n} \frac{w(x)^{p(x)}}{e+|x|^{s_1np_-}} \, dx <\frac1{2}
\quad \text{and} \quad 
e^{s_1nC_{\infty}} = e^{nC_{\infty}\frac{p_+}{p_-}}5^{\frac{C_{\infty}}{p_-}}\left[ |\det(A)|K_{p(\cdot)}[w]_{\mathcal{A}_{A,p(\cdot)}}\right]^{C_{\infty}\frac{p_+}{p_-}}
\]

Then, we have

\[\frac1{2} \leq e^{ns_1C_{\infty}} \int_Q \|w\chi_Q\|_{p(\cdot)}^{-p_{\infty}} w(x)^{p(x)} \, dx  = e^{ns_1C_{\infty}} \frac{\rho(w\chi_Q)}{\|w\chi_Q\|_{p(\cdot)}^{p_{\infty}}} \]

and that imply  
\begin{align}\label{eq: control-w-Q}
\|w\chi_Q\|_{p(\cdot)} \leq  C_2(n,p(\cdot),A) [[w]]_{\mathcal{A}_{A,p(\cdot)}}^{\frac{C_{\infty}}{p_{\infty}}\frac{p_+}{p_-}}
\rho(w\chi_Q)^{\frac1{p_{\infty}}}.
\end{align}

Then, taking account the estimates  \eqref{eq: control-wA-Q} and \eqref{eq: control-w-Q} we have
\begin{align*}
1=\frac{|Q|}{|Q|} 
&\leq K_{p(\cdot)} [w]_{\mathcal{A}_{A,p(\cdot)}} \frac{\|w\chi_Q\|_{p(\cdot)}}{\|w_A\chi_Q\|_{p(\cdot)}}
\leq C(n,p(\cdot),A) [[w]]_{\mathcal{A}_{A,p(\cdot)}}^{2\frac{C_{\infty}}{p_{\infty}}\frac{p_+}{p_-}  +1 }\left(\frac{\rho(w\chi_Q)}{\rho(w_A\chi_Q)}\right)^{\frac1{p_{\infty}}}.
\end{align*}

Summary the four cases, if $\|w_A\chi_{Q_0}\|_{p(\cdot)}=1$, we obtain
\[\rho(w_A\chi_Q) \leq C(A,n,p(\cdot)) [[w]]_{\mathcal{A}_{A,p(\cdot)}}^{\max\left\{2p_+-p_-,2C_\infty\frac{p_+}{p_-}+p_{\infty}\right\}}   \rho(w\chi_Q)\]
where and $C(A,n,p(\cdot))$ is the maximum over four constants independent of $w$.

If $\|w_A\chi_{Q_0}\|_{p(\cdot)}\not=1$, we consider $u=\frac{w}{\|w_A\chi_{Q_0}\|_{p(\cdot)}}$.
If $\|w_A\chi_{Q_0}\|_{p(\cdot)}>1$  then 
\begin{align*}
&\|w_A\chi_{Q_0}\|_{p(\cdot)}^{-p_-}\rho(w_A\chi_Q) 
\leq \rho(u_A\chi_Q) 
\\&\qquad\leq  C(A,n,p(\cdot)) [[w]]_{\mathcal{A}_{A,p(\cdot)}}^{\max\left\{2p_+-p_-,2C_\infty\frac{p_+}{p_-}+p_{\infty}\right\}}   \rho(u\chi_Q)
\\&\qquad\leq  C(A,n,p(\cdot)) [[w]]_{\mathcal{A}_{A,p(\cdot)}}^{\max\left\{2p_+-p_-,2C_\infty\frac{p_+}{p_-}+p_{\infty}\right\}}   \rho(w\chi_Q)\|w_A\chi_{Q_0}\|_{p(\cdot)}^{-p_+}
\end{align*}
Then
\[\rho(w_A\chi_Q) \leq C(A,n,p(\cdot)) [[w]]_{\mathcal{A}_{A,p(\cdot)}}^{\max\left\{2p_+-p_-,2C_\infty\frac{p_+}{p_-}+p_{\infty}\right\}}   \rho(w\chi_Q) \|w_A\chi_{Q_0}\|_{p(\cdot)}^{p_--p_+}. \]
Similarly, if $\|w_A\chi_{Q_0}\|_{p(\cdot)}>1$ then 
\[\rho(w_A\chi_Q) \leq C(A,n,p(\cdot)) [[w]]_{\mathcal{A}_{A,p(\cdot)}}^{\max\left\{2p_+-p_-,2C_\infty\frac{p_+}{p_-}+p_{\infty}\right\}}   \rho(w\chi_Q) \|w_A\chi_{Q_0}\|_{p(\cdot)}^{p_+-p_-}. \]

\end{proof}

\begin{thm}\label{w_A-w}
Let $p\in \mathcal{P}^{\log}$ such that $p_+<\infty$ and $p(Ax)=p(x)$ a.e. $x$ and $w\in \mathcal{A}_{A,p(\cdot)}$. 
Then 
\[w_A(x) \leq L_3 [[w]]_{\mathcal{A}_{A,p(\cdot)}}^{\frac1{p_-}\max\left\{2p_+-p_-,2C_\infty\frac{p_+}{p_-}+p_{\infty}\right\}} w(x)
\]
where
\[
L_3=C(A,n,p(\cdot)) \max\left\{\|w_A\chi_{Q_0}\|_{p(\cdot)}^{1-\frac{p_+}{p_-}},\|w_A\chi_{Q_0}\|_{p(\cdot)}^{\frac{p_+}{p_-}-1}\right\}
\]
and $Q_0=Q(0,2e)$.

\end{thm}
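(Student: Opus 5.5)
The natural route is to pass from the modular estimate of Lemma \ref{Control modular} to a pointwise estimate via the Lebesgue differentiation theorem, in the spirit of Proposition \ref{Propiedades} i) in the constant exponent case. Write
\[
\Lambda:=L_2\,[[w]]_{\mathcal{A}_{A,p(\cdot)}}^{\max\left\{2p_+-p_-,2C_\infty\frac{p_+}{p_-}+p_{\infty}\right\}}
\]
for the constant given there, so that $\rho(w_A\chi_Q)\le \Lambda\,\rho(w\chi_Q)$ for every cube $Q$. Dividing by $|Q|$, this says
\[
\frac1{|Q|}\int_Q w_A(x)^{p(x)}\,dx\le \Lambda\,\frac1{|Q|}\int_Q w(x)^{p(x)}\,dx .
\]
Both $w^{p(\cdot)}$ and $w_A^{p(\cdot)}$ are locally integrable: $w\in L^{p(\cdot)}_{\rm loc}$, and $w_A\in L^{p(\cdot)}_{\rm loc}$ by Proposition \ref{UV}. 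Shrinking cubes to a common Lebesgue point $x$ of both functions then gives $w_A(x)^{p(x)}\le \Lambda\, w(x)^{p(x)}$ for a.e. $x$. Taking $p(x)$-th roots yields $w_A(x)\le \Lambda^{1/p(x)}w(x)$.

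The next step is to replace $\Lambda^{1/p(x)}$ by a uniform bound. If $\Lambda\ge1$ then $\Lambda^{1/p(x)}\le \Lambda^{1/p_-}$. Since $[[w]]_{\mathcal{A}_{A,p(\cdot)}}\ge 1$ and we may enlarge $C(A,n,p(\cdot))$ so that $L_2\ge1$, we are always in this case. This produces the exponent $\frac1{p_-}\max\{2p_+-p_-,2C_\infty\frac{p_+}{p_-}+p_\infty\}$ on $[[w]]_{\mathcal{A}_{A,p(\cdot)}}$, as in the statement. The constant becomes $L_2^{1/p_-}$, which equals
\[
C(A,n,p(\cdot))^{1/p_-}\max\left\{\|w_A\chi_{Q_0}\|_{p(\cdot)}^{\frac{p_--p_+}{p_-}},\|w_A\chi_{Q_0}\|_{p(\cdot)}^{\frac{p_+-p_-}{p_-}}\right\},
\]
and this is precisely $L_3$, since $\frac{p_--p_+}{p_-}=1-\frac{p_+}{p_-}$.

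The main obstacle is not in this argument, which is short, but in making sure Lemma \ref{Control modular} is usable as stated: its normalization step (passing to $u=w/\|w_A\chi_{Q_0}\|_{p(\cdot)}$) must produce the factor $\max\{\|w_A\chi_{Q_0}\|^{p_--p_+},\|w_A\chi_{Q_0}\|^{p_+-p_-}\}$ uniformly in $Q$. I would take that lemma as given. The only remaining care is in the Lebesgue differentiation step: we need the averages over cubes centered at, or containing, $x$, and the exceptional null sets of both functions must be discarded simultaneously. This is standard since both are in $L^1_{\rm loc}$.
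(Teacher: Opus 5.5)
Your proposal is correct and follows the paper's proof essentially verbatim. You divide the modular inequality of Lemma \ref{Control modular} by $|Q|$, apply Lebesgue differentiation on cubes centered at $x$ to get $w_A(x)^{p(x)}\le \Lambda\, w(x)^{p(x)}$, and take $p(x)$-th roots, using $\Lambda\ge 1$ to replace $1/p(x)$ by $1/p_-$ and so recover $L_3$. You also make explicit two points the paper leaves implicit: differentiation must be applied to $w^{p(\cdot)}$ and $w_A^{p(\cdot)}$ rather than just to $w$ and $w_A$, and one needs $\Lambda\ge1$. Neither changes the argument.
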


Observe that the exponent $\max\left\{2p_+-p_-,2C_\infty\frac{p_+}{p_-}+p_{\infty}\right\}\frac1{p_-}\geq 1$ and the equality holds if $p(\cdot)$ is constant.

\begin{proof}
Let $x\in \mathbb{R}^n$. Taking a cube $Q$ centered at $x$. 
By Lemma \ref{Control modular} and divided by $|Q|$, we have  
\[ \frac{\rho(w_A\chi_Q)}{|Q|} \leq L_2[[w]]_{\mathcal{A}_{A,p(\cdot)}}^{\max\left\{2p_+-p_-,2C_\infty\frac{p_+}{p_-}+p_{\infty}\right\}} \frac{\rho(w\chi_Q)}{|Q|}\]

Since $w$ is a weight, $w,w_A$ are locally integrable functions, then by the Lebesgue Differentiation Theorem we have
\[ w_A(x)^{p(x)} \leq L_2[[w]]_{\mathcal{A}_{A,p(\cdot)}}^{\max\left\{2p_+-p_-,2C_\infty\frac{p_+}{p_-}+p_{\infty}\right\}} w(x)^{p(x)} \]
Then, 
\begin{align*}
w_A(x)
&\leq L_2^{\frac1{p(x)}}[[w]]_{\mathcal{A}_{A,p(\cdot)}}^{\max\left\{2p_+-p_-,2C_\infty\frac{p_+}{p_-}+p_{\infty}\right\}\frac1{p(x)}} w(x)
\\&\leq L_3[[w]]_{\mathcal{A}_{A,p(\cdot)}}^{\frac1{p_-}\max\left\{2p_+-p_-,2C_\infty\frac{p_+}{p_-}+p_{\infty}\right\}} w(x).
\end{align*}
\end{proof}

Observe that in the case of matrices $A$ such that $A^k=I$ for some $k\in \mathbb{N}$ and $p(Ax)=p(x)$ a.e. $x$, this result and  Corollary \ref{pesos-A-nilp} imply that 
\[\mathcal{A}_{A,p(\cdot)}=\left\{w\in \mathcal{A}_{p(\cdot)} : w(Ax)\lesssim w(x) \;a.e. \, x\in \mathbb{R}^n\right\}.\]

In this case, it can be proved that $M_{A^{-1}}$ is bounded on $L^{p(\cdot)}(w)$ using Theorem \ref{w_A-w} and the boundedness on $L^{p(\cdot)}(w)$ of the Hardy-Littlewood maximal $M$. However, the 
control of the weight constant is worth than the obtained in Theorem \ref{Principal}.

\section{Examples}

In this final section, we establish a lemma that provides a convenient method for constructing weights in the class $\mathcal{A}_{p(\cdot)}$ under suitable assumptions on the exponent. We then illustrate the result by presenting an explicit example of an exponent, a matrix, and a weight satisfying the hypotheses of Theorem \ref{Principal}. The lemma below follows from the weighted boundedness results for the Hardy--Littlewood maximal operator established in \cite{CUDH11}. For completeness, we include the proof.

\begin{prop}\label{lema-Ap(.)}
Let $p(\cdot)\in\mathcal{P}^{\log}$ satisfy
$1<p_-\leq p_+<\infty,$
and let $p_\infty$ denote its limit at infinity.
Suppose that $\alpha,\beta\in\mathbb{R}$ satisfy
\[
-\frac{n}{p(0)}<\alpha<\frac{n}{p'(0)},
\qquad
-\frac{n}{p_\infty}<\beta<\frac{n}{p_\infty'},
\]
where $p_\infty'=p_\infty/(p_\infty-1)$.
Then the weight
\[
w(x)=|x|^\alpha(1+|x|)^{\beta-\alpha},
\qquad x\in\mathbb{R}^n\setminus\{0\},
\]
belongs to $\mathcal{A}_{p(\cdot)}$. 
The value of $w$ at the origin may be chosen arbitrarily
to be positive.
\end{prop}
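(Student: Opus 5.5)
The plan is to use the characterization of $\mathcal{A}_{p(\cdot)}$ from \cite{CUDH11}: for $p(\cdot)\in\mathcal{P}^{\log}$ with $1<p_-\leq p_+<\infty$, the operator $M$ is bounded on $L^{p(\cdot)}_w$ if and only if $w\in\mathcal{A}_{p(\cdot)}$. Rather than invoking that theorem, I will verify the condition
\[
\sup_Q |Q|^{-1}\|w\chi_Q\|_{p(\cdot)}\|w^{-1}\chi_Q\|_{p'(\cdot)}<\infty
\]
directly by splitting cubes according to size and position. Write $w(x)\approx|x|^\alpha$ for $|x|\leq1$ and $w(x)\approx|x|^\beta$ for $|x|\geq1$. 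The conditions $-n/p(0)<\alpha<n/p'(0)$ make $|x|^{\alpha p(x)}$ and $|x|^{-\alpha p'(x)}$ integrable near $0$. Indeed, by $LH_0$ we have $|x|^{\alpha(p(x)-p(0))}\approx1$ near the origin, so $p(x)$ may be replaced by $p(0)$ there. Similarly, the conditions on $\beta$ control the behaviour at infinity via $LH_\infty$, using $|x|^{\beta(p(x)-p_\infty)}\approx1$.

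\textbf{Step 1: small cubes away from the origin.} Let $Q=Q(x_0,\ell)$ with $\ell\leq|x_0|/2$. On $Q$ we have $w\approx w(x_0)$, so
\[
\|w\chi_Q\|_{p(\cdot)}\|w^{-1}\chi_Q\|_{p'(\cdot)}\approx\|\chi_Q\|_{p(\cdot)}\|\chi_Q\|_{p'(\cdot)}\lesssim|Q|.
\]
The last bound is the known fact $1\in\mathcal{A}_{p(\cdot)}$ for $p(\cdot)\in\mathcal{P}^{\log}$; see \cite{CUFlibro}.

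\textbf{Step 2: cubes near or containing the origin.} Here $Q\subset Q(0,c\ell)$ with $\ell\geq|x_0|/2$, so it suffices to treat cubes centered at $0$. I split into two cases.
\begin{itemize}
\item If $\ell\leq1$, use Lemma~\ref{norma-modular}. The modular of $w\chi_Q$ is about $\ell^{n+\alpha p(0)}$, so $\|w\chi_Q\|_{p(\cdot)}\approx\ell^{n/p(0)+\alpha}$. Similarly $\|w^{-1}\chi_Q\|_{p'(\cdot)}\approx\ell^{n/p'(0)-\alpha}$. The $LH_0$ estimate of Lemma~\ref{Lema-LH_0} absorbs the exponent discrepancies of the form $\ell^{p_\pm(Q)-p(0)}$. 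The product is then $\approx\ell^n=|Q|$.
\item If $\ell\geq1$, compute the modulars as $\int_{|x|<\ell}|x|^{\beta p(x)}\,dx$ plus a bounded piece. Lemma~\ref{Lema-LH_infty} lets me replace $p(x)$ by $p_\infty$ up to an integrable error $(e+|x|)^{-Np_-}$. The resulting modular is about $\ell^{n+\beta p_\infty}$, which grows because $\beta>-n/p_\infty$. This gives norms $\approx\ell^{n/p_\infty+\beta}$ and $\approx\ell^{n/p'_\infty-\beta}$, whose product is $\approx|Q|$.
\end{itemize}

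\textbf{Step 3: large cubes far from the origin.} These are covered by Step~1 or by enlarging to cubes centered at $0$ of comparable size, so Step~2 applies. Finally, since $w$ is positive, finite and locally integrable, its value at the single point $0$ is irrelevant.

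The main obstacle is the case of large cubes in Step~2. Converting modulars to norms requires uniform control of $\lambda^{p(x)}$ against $\lambda^{p_\infty}$ when $\lambda$ is large. There I would follow the argument of \cite{CUDH11}: use $LH_\infty$ in the form $\ell^{|p(x)-p_\infty|}\lesssim1$ for $|x|\sim\ell$, applied on dyadic annuli. The error terms from Lemma~\ref{Lema-LH_infty} must be shown to be dominated by the main term, which holds because the main term is bounded below by a constant when $\ell\geq1$.
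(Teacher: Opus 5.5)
Your proposal is correct in outline, but it takes a genuinely different route from the paper.

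\textbf{The paper's route.} The paper never estimates $\|w\chi_Q\|_{p(\cdot)}$ on cubes. It passes to $\sigma=w^{p(\cdot)}$ and uses $LH_0$ and $LH_\infty$ only to get $\sigma(x)\approx|x|^{\alpha p(0)}$ near the origin and $\sigma(x)\approx|x|^{\beta p_\infty}$ for $|x|\geq1$. The classical constant-exponent fact $|x|^{a}\in\mathcal{A}_q \iff -n<a<n(q-1)$ then places $\sigma$ in the local class $\mathcal{A}^*_{p(\cdot)}$. From there \cite{CUDH11} does the rest: $\sigma\in\mathcal{A}^*_{p(\cdot)}$ gives boundedness of $M$ on $L^{p(\cdot)}(\sigma)=L^{p(\cdot)}_w$, and the necessity theorem there gives $w\in\mathcal{A}_{p(\cdot)}$.

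\textbf{Your route.} You verify the defining inequality by hand. You split cubes into two kinds. For cubes far from $0$ relative to their size, $w\approx w(x_0)$ and $1\in\mathcal{A}_{p(\cdot)}$ suffices. For cubes contained in a comparable cube centered at $0$, you compute the norms directly.

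\textbf{What each buys.} The paper's argument is a few lines and reduces everything to constant-exponent power weights, but it rests entirely on the external characterization. Yours is self-contained: it uses only Lemmas \ref{Lema-LH_0}, \ref{Lema-LH_infty}, \ref{norma-modular} and the unweighted fact $1\in\mathcal{A}_{p(\cdot)}$. It also yields the explicit sizes $\ell^{n/p(0)+\alpha}$ and $\ell^{n/p_\infty+\beta}$ of $\|w\chi_{Q(0,\ell)}\|_{p(\cdot)}$.

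Two points would tighten your write-up.

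First, the condition only asks for an upper bound on the product, so you only need upper bounds on each norm. That makes your ``main obstacle'' milder than you fear. For $\ell\geq1$, take $\lambda=\ell^{n/p_\infty+\beta}\geq1$.
\begin{itemize}
\item Bound the piece over $\{|x|\le1\}$ by $\int_{|x|\le1}w(x)^{p(x)}\,dx<\infty$. Keep $p(x)$ there, since $\alpha p_\infty$ may be $\le -n$.
\item On the rest, apply Lemma \ref{Lema-LH_infty} to $F=w/(C\lambda)\le1$. The error term is a fixed constant, and the main term is $\lesssim\lambda^{-p_\infty}\ell^{n+\beta p_\infty}=1$.
\item Do the same with $p'(\cdot)$, $w^{-1}$ and $\ell^{n/p'_\infty-\beta}$.
\end{itemize}
No lower bounds and no dyadic annuli are needed. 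The small-cube case likewise only needs the upper bound on the modular.

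Second, the threshold in Step 1 must depend on $n$, for example $\ell\le|x_0|/\sqrt n$ when $\ell$ is the side length. Otherwise the cube can reach the origin: for $x_0=(1,\dots,1)$ and $\ell=|x_0|/2$ this happens once $n\geq16$. With that fix and no size restriction in Step 1, your Step 3 is automatically covered.
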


\begin{proof}

In \cite{CUDH11} the authors shows that if $\sigma(x)=w(x)^{p(x)}$ belong in $\mathcal{A}_{p(\cdot)}^*$ then $w\in \mathcal{A}_{p(\cdot)}$. Recall that $\sigma\in \mathcal{A}_{p(\cdot)}^*$ if
\begin{itemize}
\item  for every $x_0$, fix a cube $Q_{x_0}$ centered in $x_0$ then $\sigma \in \mathcal{A}_{p(x_0)}$ locally on $Q_{x_0}$ 
\item $\sigma \in \mathcal{A}_{p_{\infty}}$ locally on $\mathbb{R}\setminus Q(0,N)$ for some $N>0$. 
\end{itemize}

Let $\sigma(x)=w(x)^{p(x)}.$ We first examine its behavior near the origin.

For $0<|x|< \frac1{2}$,
\[
\frac{\sigma(x)}{|x|^{\alpha p(0)}}
=
|x|^{\alpha(p(x)-p(0))}
(1+|x|)^{(\beta-\alpha)p(x)}.
\]
Since $p(\cdot)\in LH_0$,
\[
|p(x)-p(0)|\,|\log|x||\leq C_0,
\qquad 0<|x|< \frac1{2}.
\]
Consequently,
\[
|x|^{\alpha(p(x)-p(0))}\approx1.
\]
The second factor is also bounded above and below by
positive constants, because $1\leq1+|x|\leq2$ and
$p_+<\infty$. Hence
\[
\sigma(x)\approx |x|^{\alpha p(0)},
\qquad 0<|x|< \frac1{2}.
\]
The assumptions on $\alpha$ are equivalent to
\[
-n<\alpha p(0)<n\bigl(p(0)-1\bigr).
\]
By the classical characterization of power weights,
$|x|^{\alpha p(0)}\in \mathcal{A}_{p(0)}$.
Therefore, $\sigma$ satisfies the $\mathcal{A}_{p(0)}$ condition
on a neighborhood of the origin.

At every point $x_0\neq0$, the function $\sigma$ is
continuous and strictly positive on a sufficiently small
closed cube centered at $x_0$.
It is therefore bounded above and below by positive
constants on that cube. In particular, $\sigma$ satisfies
the local $A_{p(x_0)}$ condition.

Next, for $|x|\geq1$, we write
\[
\frac{\sigma(x)}{|x|^{\beta p_\infty}}
=
|x|^{\beta(p(x)-p_\infty)}
\left(1+\frac{1}{|x|}\right)^{(\beta-\alpha)p(x)}.
\]
The condition $p(\cdot)\in LH_\infty$ gives
\[
|p(x)-p_\infty|\log (|x|)\leq C_{\infty}\frac{\log (|x|)}{\log (e+|x|)} \leq C_{\infty},
\qquad |x|\geq1.
\]
It follows that
\[
\sigma(x)\approx |x|^{\beta p_\infty},
\qquad |x|\geq1.
\]
Moreover, the assumptions on $\beta$ are equivalent to
\[
-n<\beta p_\infty<n(p_\infty-1).
\]
Thus $|x|^{\beta p_\infty}\in \mathcal{A}_{p_\infty}$, and
$\sigma$ satisfies the $\mathcal{A}_{p_\infty}$ condition outside
a sufficiently large cube.

Hence $\sigma \in \mathcal{A}^*_{p(\cdot)}$. Thus, by \cite[Theorem~1.2]{CUDH11}, the Hardy--Littlewood maximal operator is bounded on the variable Lebesgue space $L^{p(\cdot)}(\mathbb{R}^n,\sigma)$ (that is, treating $\sigma$ as a measure). And this implies that the Hardy--Littlewood maximal operator is bounded on $L^{p(\cdot)}_w$. Then by \cite[Theorem~1.3]{CUDH11}, 
$w\in\mathcal{A}_{p(\cdot)}$.
\end{proof}

\

\begin{ejem}
Let $x\in \mathbb{R}^2$ and we take,
\[
A=
\begin{pmatrix}
0 & -1\\
1 & 0
\end{pmatrix},
\qquad
p(x)=2+e^{-|x|^2},
\]
and consider the weight
\[
w(x)=|x|(1+|x|)^{-3/2}, \quad w(0)=1.
\]

The matrix $A$ is orthogonal and satisfies
$A^4=I$. Hence
\[
p(Ax)=p(x),
\qquad
w(Ax)=w(x).
\]

We first verify the regularity assumptions on the exponent.
Notice that
\[
p_-=2,\qquad p_+=3,\qquad p(0)=3,
\qquad p_\infty=2.
\]
Moreover $p(\cdot)$ is globally Lipschitz,
\[
|p(x)-p(y)|
\leq \sqrt{\frac{2}{e}}\,|x-y|.
\]
Thus $p(\cdot)\in LH_0$. Furthermore for all $x\in \mathbb{R}^2$,
\[
|p(x)-p_\infty|=e^{-|x|^2}
\leq \frac{C}{\log(e+|x|)}.
\]
Therefore, $p\in LH_\infty$.

By Proposition \ref{lema-Ap(.)} the radial weight
\[
v_{\alpha,\beta}(x)
=|x|^\alpha(1+|x|)^{\beta-\alpha}
\]
belongs to the weight class
$\mathcal{A}_{p(\cdot)}$ whenever
\[
-\frac{n}{p(0)}<\alpha<\frac{n}{p'(0)},
\qquad
-\frac{n}{p_\infty}<\beta<\frac{n}{p_\infty'}.
\]
In the present example, $n=2$, $\alpha=1$, and
$\beta=-1/2$. These inequalities become
\[
-\frac23<1<\frac43,
\qquad
-1<-\frac12<1.
\]
Hence $w\in\mathcal{A}_{p(\cdot)}$.

Finally, since $w_A=w$, we obtain $[w]_{\mathcal{A}_{A,p(\cdot)}}
= [w]_{\mathcal{A}_{p(\cdot)}}<\infty$.
Therefore, $w\in\mathcal{A}_{A,p(\cdot)}$.
\end{ejem}

\section*{AI Disclosure}
ChatGPT was used for language editing and proofreading. All mathematical results,
arguments, proofs, and conclusions were developed and proved by the authors.


\begin{thebibliography}{99}

\bibitem{CCUF07}
Capone, C., Cruz-Uribe, D., Fiorenza, A.
\newblock The fractional maximal operator and fractional integrals on variable {$L^p$} spaces. 
\newblock{\em Rev. Mat. Iberoam.} 23 (2007), no. 3, 743--770.

\bibitem{CUDH11}
Cruz-Uribe, D., Diening, L.; Hästö, P.,
\newblock The maximal operator on weighted variable Lebesgue spaces. 
\newblock{\em Fract. Calc. Appl. Anal.} 14 (2011), no. 3, 361--374.

\bibitem{CUFlibro}
Cruz-Uribe, D.~V., and Fiorenza, A.,
\newblock Variable Lebesgue spaces: Foundations and harmonic analysis.
\newblock {\em Springer Science {\&} Business Media}, 2013.

\bibitem{CUFN03}
Cruz-Uribe, D., Fiorenza, A., Neugebauer, C. J.,
\newblock The maximal function on variable {$L^p$} spaces. 
\newblock {\em Ann. Acad. Sci. Fenn. Math.} 28 (2003), no. 1, 223--238.

\bibitem{CUFN12}
Cruz-Uribe, D., Fiorenza, A., Neugebauer, C. J.,
\newblock Weighted norm inequalities for the maximal operator on variable Lebesgue spaces. 
\newblock {\em J. Math. Anal. Appl.} 394 (2012), no. 2, 744--760.

\bibitem{CUP25}
Cruz-Uribe, D., Penrod, M., 
\newblock The reverse Hölder inequality for $\mathcal{A}_{p(\cdot)}$ weights with applications to matrix weights. 
\newblock {\em Publ. Mat.} 70 (2026), no. 2, 493--524.

\bibitem{D04}
Diening, L.,
\newblock Maximal function on generalized Lebesgue spaces $L^{p(\cdot)}$. 
\newblock {\em Math. Inequal. Appl.} 7 (2004), no. 2, 245--253.


\bibitem{DHHRlibro}
Diening L., Harjulehto P., H{\"a}st{\"o} P., Ruzicka M.,
\newblock Lebesgue and Sovolev Spaces with variable exponents, 
\newblock {\em Lecture Notes in Mathematics} 2017, Springer-Verlag
Berlin Heidelberg (2011).

\bibitem{IF26}
Ibañez-Firnkorn, G., 
\newblock Weighted boundedness for the maximal operator associated with matrices. 
\newblock {\em arXiv preprint} arXiv:2603.02126 (2026).  

\bibitem{IFR18} 
Ibañez Firnkorn, G. H., Riveros, M. S., 
\newblock Certain fractional type operators with Hörmander conditions. 
\newblock {\em Ann. Acad. Sci. Fenn. Math.} 43 (2018), no. 2, 913-929

\bibitem{IFRV22}
Ibañez-Firnkorn, G. H.; Riveros, M. S., Vidal, R. E., 
\newblock Necessary condition on weights for maximal and integral operators with rough kernels. \newblock {\em Studia Math.} 263 (2022), no. 3, 293--321.

\bibitem{IFV23}
Ibañez-Firnkorn, Gonzalo; Vallejos, Lucas Alejandro. 
\newblock Boundedness of commutators of integral operators of fractional type and $M_{\alpha, L^r\log L}$ maximal operator in variable Lebesgue spaces. 
\newblock {\em J. Geom. Anal.} 33 (2023), no. 11, Paper No. 354, 16 pp.

\bibitem{RU13}
Riveros, M. S., Urciuolo, M., 
\newblock Weighted inequalities for fractional type operators with some homogeneous kernels.
\newblock {\em Acta Math. Sin.} (Engl. Ser.) 29 (2013), no. 3, 449--460.

\bibitem{UV20}
Urciuolo, M., Vallejos, L.,
\newblock $L^{p(\cdot)}$-$L^{q(\cdot)}$ boundedness of some integral operators obtained by extrapolation techniques. 
\newblock {\em Georgian Math. J.} 27 (2020), no. 3, 479--484.

\bibitem{UV20a}
Urciuolo, M. S., Vallejos, L. A., 
\newblock A generalization of the boundedness of certain integral operators in variable Lebesgue spaces. 
\newblock {\em J. Math. Inequal.} 14 (2020), no. 2, 547--557.

\bibitem{UV20b}
Urciuolo, M., Vallejos, V.,
\newblock Integral operators with rough kernels in variable Lebesgue spaces.
\newblock {\em  Acta Math. Hungar.} 162 (2020), no. 1, 105--116.

\end{thebibliography}
\end{document}